\documentclass[a4paper,11pt]{amsart}
\usepackage{graphicx}
\usepackage{latexsym, color}
\usepackage{amsthm}
\usepackage{autograph,eepic,epic,latexsym,bezier,amsbsy,color,enumerate,amsfonts,amsmath,amscd,amssymb}
\usepackage[latin1]{inputenc}

\unitlength=0,3mm

\textwidth = 16.00cm \textheight = 22.00cm \oddsidemargin= 0.12in
\evensidemargin = 0.12in \setlength{\parindent}{8pt}
\setlength{\parskip}{5pt plus 2pt minus 1pt}
\setloopdiam{8}\setprofcurve{7}
\newtheorem{defi}{Definition}[section]
\newtheorem{teo}[defi]{Theorem}
\newtheorem{lem}[defi]{Lemma}
\newtheorem{prop}[defi]{Proposition}

\newtheorem{os}[defi]{Remark}

\def\subjclass#1{{\renewcommand{\thefootnote}{}%
\footnote{\emph{Mathematics Subject Classification (2010):} #1}}}

\begin{document}

\title{The Tutte Polynomial of the Sierpi\'{n}ski and Hanoi graphs}

\subjclass{Primary: 05C31. Secondary: 05C15, 05C30, 05C38, 20E08.}

\keywords{Tutte polynomial, Sierpi\'{n}ski graph, Schreier graph
of the Hanoi Towers group, self-similar graph, spanning subgraph,
acyclic orientation, reliability polynomial, chromatic polynomial,
partition function of the Ising model.}

\maketitle

\author{\begin{center}{ALFREDO DONNO\\ Dipartimento di Matematica,\quad Sapienza Universit\`a di
Roma,\quad Piazzale A. Moro, 2\\ 00185 Roma, Italia. \qquad {\it
E-mail address:}
\texttt{alfredo.donno@sbai.uniroma1.it}}\end{center}}

\author{\begin{center}{DONATELLA IACONO\\ Dipartimento di Matematica,\quad Universit\`a degli
Studi di Bari,\quad Via E. Orabona, 4\\ 70125 Bari, Italia. \qquad
{\it E-mail address:} \texttt{iacono@dm.uniba.it}}\end{center}

\begin{abstract}
We study the Tutte polynomial of two infinite families of finite
graphs: the Sierpi\'{n}ski graphs, which are finite approximations
of the well-known Sierpi\'{n}ski gasket, and the Schreier graphs
of the Hanoi Towers group $H^{(3)}$ acting on the rooted ternary
tree. For both of them, we recursively describe the Tutte
polynomial and we compute several special evaluations of it,
giving interesting results about the combinatorial structure of
these graphs.
\end{abstract}

\section{Introduction}

The Tutte polynomial is a two-variable polynomial which can be
associated with a graph, a matrix, or, more generally, with a
matroid. It has many interesting applications in several areas of
sciences as, for instance, Combinatorics, Probability, Statistical
Mechanics, Computer Science and Biology. It was introduced by W.T.
Tutte \cite{tutte47, tutte54, tutte67} and we will mainly refer to
\cite{bollobas, equivalence, machi, tutte04} as expository papers.

Given a finite graph $G$, its Tutte polynomial $T(G;x,y)$
satisfies a fundamental universal property with respect to the
deletion-contraction reduction of the graph. Hence, any
multiplicative graph invariant with respect to a
deletion-contraction reduction turns out to be an evaluation of
it. This polynomial is quite interesting since several
combinatorial, enumerative and algebraic properties of the graph
can be investigated by considering special evaluations of it. For
instance, one gets information about the number of spanning trees,
spanning connected subgraphs, spanning forests and acyclic
orientations of the graph. Moreover, the Tutte polynomial also
allows to recover the reliability and the chromatic polynomials.
It has also many interesting connections with statistical
mechanical models as the Potts model \cite{potts}, the percolation
\cite{percolation}, the Abelian Sandpile Model \cite{cori,
merino}, as well as with the theory of error correcting codes
\cite{potts}.

In this paper, we study the Tutte polynomial of two infinite
families of finite graphs very close each other: the
Sierpi\'{n}ski graphs $\{\Gamma_n\}_{n\geq 1}$, approximating the
famous Sierpi\'{n}ski gasket, and the Schreier graphs
$\{\Sigma_n\}_{n\geq 1}$ of the Hanoi Towers group $H^{(3)}$,
whose action on the ternary tree models the well-known Hanoi
Towers game on three pegs, and which is an example of self-similar
group. In the last decades, the study of automorphism groups of
rooted trees has been largely investigated: R. Grigorchuk and a
number of coauthors have developed a new exciting direction of
research focusing on finitely generated groups acting by
automorphisms on rooted trees \cite{fractal}. They proved that
these groups have deep connections with the theory of profinite
groups and with complex dynamics. In particular, many groups of
this type satisfy a property of self-similarity, reflected on
fractalness of some limit objects associated with them
\cite{volo}. In \cite{csdi}, the Tutte polynomial is studied for
two examples of Schreier graphs associated with the action of two
self-similar groups on the rooted binary tree, namely the
Grigorchuk group and the Basilica group.

In this paper, we follow a combinatorial approach and we use the
self-similar structure of the graphs (in the sense of
\cite{wagner2}) to recursively investigate the Tutte polynomial of
the families $\{\Gamma_n\}_{n\geq 1}$ and $\{\Sigma_n\}_{n\geq
1}$. It is worth mentioning \cite{biggsetal, noy}, where the
authors consider the Tutte polynomial for recursive families of graphs.\\
\indent We study the Tutte polynomial analyzing all the spanning
subgraphs of these sequences of graphs. Then, we give a partition
of the set of the spanning subgraphs, that allows us to split the
polynomial as a sum of three terms (the same strategy turns out to
be a powerful tool to investigate many combinatorial and
statistical models on them: see, for instance, \cite{chang,
taiwan3, taiwan2, taiwan, noispanning, noiising, noi1, wagner1,
wagner2}). Finally, using self-similarity, we are able to give a
recursive formula for each of these terms (see Theorems
\ref{ricorsivesierpinski} and \ref{noname}). Once we have these
formulas, we are able to show the following properties:
\begin{itemize}
\item \emph{recursive formulas for the Tutte polynomial} (Theorems \ref{ricorsivesierpinski} and
\ref{noname});\smallskip
\item  \emph{description of the reliability polynomial} (Propositions \ref{donatellasierp} and
\ref{donatellahanoi});\smallskip
\item  \emph{computation of complexity} (Propositions
\ref{propcomplexsierp} and \ref{propcomplexhanoi});\smallskip
\item  \emph{number of connected spanning subgraphs} (Propositions \ref{propconnsubgrsierp} and
\ref{propconnsubgrhanoi});\smallskip
\item \emph{number of spanning forests} (Propositions
\ref{spannforsierp} and \ref{spannforesthanoi});\smallskip
\item  \emph{number of acyclic orientations} (Propositions \ref{propacycsierp} and
\ref{propacyclhanoi});\smallskip
\item  \emph{description of the chromatic polynomial} (Propositions
\ref{propchromsierp} and \ref{propchromhanoi});\smallskip
\item  \emph{computation of the partition function of the Ising model} (Theorems \ref{thmisingsierp} and \ref{thmisinghanoistrano}).
\end{itemize}

Some of these combinatorial properties were already known in
literature, and so our main result, which consists in the
description of the Tutte polynomial, collects these aspects in a
more general and stronger context.\\
\indent  The paper is organized as follows. Section \ref{Section
preliminare} contains some preliminary material on the Tutte
polynomial and on the theory of automorphism groups of rooted
regular trees. In Section \ref{sezione serpin}, we study the Tutte
polynomial of the Sierpi\'{n}ski graphs $\{\Gamma_n\}_{n\geq 1}$
and analyze many applications. In particular, we recover that, for
every $n\geq 1$, the graph $\Gamma_n$ is uniquely $3$-colorable
(Proposition \ref{colorabilitysierp}). Section \ref{sezione hanoi}
is devoted to the analysis of the Schreier graphs
$\{\Sigma_n\}_{n\geq 1}$ of the Hanoi Towers group $H^{(3)}$. We
also underline the very close structure of the graphs
$\{\Gamma_n\}_{n\geq 1}$ and $\{\Sigma_n\}_{n\geq 1}$ by pointing
out the relationship between $T(\Gamma_n;x,y)$ and
$T(\Sigma_n;x,y)$ (Proposition \ref{sierp-hanoi}).

\smallskip

While submitting this paper, we became aware of a  recent
preprint \cite{alvarez}, submitted to arXiv after our one, where
similar computations are performed.

\section{Preliminaries}\label{Section preliminare}

\subsection{The Tutte polynomial}\label{2.1}

Throughout the paper, we deal with graphs which are connected and
finite. Moreover, both multiple edges and multiple loops are
allowed. As usual, $G=(V(G),E(G))$ denotes a graph with vertex set
$V(G)$ and edge set $E(G)$; we will often write $V$ and $E$, when
there is no risk of confusion, and so $G=(V,E)$. Moreover, we
denote by $E_n$ the graph with $n$ vertices and no edges, and by
$K_n$ the complete graph on $n$ vertices. A subgraph
$A=(V(A),E(A))$ of a graph $G=(V(G),E(G))$ is said \emph{spanning}
if the condition $V(A)=V(G)$ is satisfied. In particular, a
\emph{spanning subtree} of $G$ is a spanning subgraph of $G$ which
is a tree, a \emph{spanning forest} of $G$ is a spanning subgraph
of $G$ which is a forest. The number of spanning trees of a graph
$G$ is called \emph{complexity} of $G$ and is denoted by
$\tau(G)$.  It is interesting to study complexity when the system
grows. More precisely, given a sequence $\{G_n\}_{n\geq 1}$ of
finite graphs with complexity $\tau(G_n)$, such that $|V(G_n)|\to
\infty$, the limit
$$
\lim_{|V(G_n)| \to \infty}\frac{\log \tau(G_n)}{|V(G_n)|},
$$
when it exists, is called the \emph{asymptotic growth
constant} of the spanning trees of $\{G_n\}_{n\geq 1}$ \cite{lyons}.\\
Finally, let $k(G)$ be the number of connected components of $G$.

\begin{defi}
Let $A$ be a spanning subgraph of $G$, then the rank $r(A)$ and
the nullity $n(A)$ of $A$ are defined as
\[
r(A)=|V(A)| -k(A)= |V(G)|-k(A) \quad \mbox{ and } \quad
n(A)=|E(A)| - r(A)=|E(A)|-|V(A)| + k(A).
\]
\end{defi}

\begin{defi}[Spanning subgraphs]\label{defspanning}
Let $G=(V,E)$ be a graph. The Tutte polynomial $T(G;x,y)$ of $G$
is defined as
\begin{eqnarray}\label{defsubgraphs}
T(G;x,y)= \sum_{A\subseteq G}(x-1)^{r(G)-r(A)}(y-1)^{n(A)},
\end{eqnarray}
where the sum runs over all the spanning subgraphs $A$ of $G$.
\end{defi}

The Tutte polynomial can be also defined by a recursion process
given by deleting and contracting edges. We recall that, given
$G=(V,E)$, the graph $G\setminus e=(V,E-\{e\})$ is obtained from
$G$ by deleting the edge $e\in E$. The graph obtained by
contracting an edge $e\in E$ is the result of the identification
of the endpoints of $e$ followed by removing $e$. We denote it by
$G/ e$. Finally, we recall that an edge in a connected graph is a
\emph{bridge} if its deletion disconnects the graph, it is a
\emph{loop} if its endpoints coincide.

\begin{defi}[Deletion-Contraction]\label{contracting}
Let $G=(V,E)$ be a graph. The Tutte polynomial $T(G;x,y)$ of $G$
is defined as
$$T(G;x,y)=
\left\{
\begin{array}{ll}
   1  & \hbox{if $G=E_1$;} \\
   xT(G\setminus e;x,y)  & \hbox{if e is a bridge;} \\
   yT(G\setminus e;x,y)  & \hbox{if e is a loop;} \\
   T(G\setminus e;x,y) + T(G/ e;x,y)  & \hbox{if e is neither a bridge nor a loop.} \\
  \end{array}
\right.$$
\end{defi}

The recursive process to compute the Tutte polynomial in this
second definition is independent on the order in which the edges
are chosen: this can be proven by showing that Definitions
\ref{defspanning} and \ref{contracting} are equivalent
\cite{equivalence}.

Once we have the definition, we can state some of the main
properties of the Tutte polynomial (for more details, see
\cite{bollobas, equivalence, machi}). Recall that a one point join
$G*H$ of two graphs $G$ and $H$ is obtained by identifying a
vertex $v$ of $G$ and a vertex $w$ of $H$ into a single vertex of
$G*H$. The following property can be easily proven by using
Definition \ref{defspanning}:
\begin{eqnarray}\label{prodotto}
T(G*H;x,y)=T(G;x,y)T(H;x,y).
\end{eqnarray}
In the sequel of the paper, we will refer to this equality as
Property \eqref{prodotto}. Next, recall that if $G=(V,E)$ is
connected and for some $W\subset V$ the graph $G \setminus W$ is
disconnected, we say that $W$ separates $G$. A graph $G$ is
\emph{$2$-connected} if either $G$ is the complete graph $K_3$ or
it has at least $4$ vertices and no vertex separates $G$.
\begin{teo}\cite[Theorem 26]{machi}\label{2connected}
If $G$ is a $2$-connected graph, then $T(G;x,y)$ is irreducible in
$\mathbb{Z}[x,y]$.
\end{teo}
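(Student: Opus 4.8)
The plan is to work in the unique factorization domain $\mathbb{Z}[x,y]=\mathbb{Z}[y][x]$ and to exploit that, for a $2$-connected (hence loopless and bridgeless) graph $G$, the extremal spanning subgraphs in Definition \ref{defspanning} are unique. Indeed, the only spanning subgraph of rank $0$ is the edgeless one, so the coefficient of $x^{r(G)}$ equals $1$; and the only spanning subgraph of maximal nullity $n(G)$ is $G$ itself, so the coefficient of $y^{n(G)}$ equals $1$. Thus $T(G;x,y)$ is \emph{monic} both as a polynomial in $x$ over $\mathbb{Z}[y]$ (of degree $r(G)=|V|-1\ge 1$) and as a polynomial in $y$ over $\mathbb{Z}[x]$ (of degree $n(G)=|E|-|V|+1\ge 1$). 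A monic polynomial is primitive, so by Gauss's Lemma I may assume that any factorization $T=PQ$ into non-units has $P,Q\in\mathbb{Z}[x,y]$; any factor lying in $\mathbb{Z}[y]$ would divide the content $1$ and hence be a unit, and since the two $x$-leading coefficients multiply to $1$, both $P$ and $Q$ are monic in $x$ of positive $x$-degree, and symmetrically of positive $y$-degree.

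Next I would pin down the low-degree part. Using the deletion--contraction recursion (Definition \ref{contracting}) one checks $t_{00}=0$, while for a $2$-connected graph the coefficients $t_{10}$ and $t_{01}$ are strictly positive (this is where $2$-connectivity first enters: $t_{10}$ is Crapo's beta invariant, which is positive exactly for $2$-connected matroids, and the same applied to the dual matroid gives $t_{01}>0$). Writing $P_0=P(0,0)$ and $Q_0=Q(0,0)$, the relation $t_{00}=P_0Q_0=0$ forces a zero constant term in at least one factor, and the linear part $t_{10}x+t_{01}y$ of $T$ equals $P_0\cdot(\text{linear part of }Q)+Q_0\cdot(\text{linear part of }P)$. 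If both $P_0,Q_0$ vanished this linear part would vanish too, contradicting $t_{10},t_{01}>0$; hence, say, $P_0\ne 0$ and $Q_0=0$, and the linear part of $Q$ is forced to be proportional to $t_{10}x+t_{01}y$.

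The main obstacle is that this local information is \emph{consistent} with a factorization and does not by itself yield a contradiction: one must use $2$-connectivity globally to exclude the factor $Q$ with $Q(0,0)=0$ and positive degree in both variables. I would carry this out by induction on $|E(G)|$ through an open ear decomposition of $G$: every $2$-connected graph is obtained from a cycle by successively attaching ears (internally disjoint paths between existing vertices), and each intermediate graph is again $2$-connected. Attaching an ear reduces to a sequence of series extensions, for which deletion--contraction gives a relation of the form $T(H')=x\,T(H)+T(H+g)$, where $H+g$ is $H$ with an added $u$--$v$ edge; the base case is the cycle, whose polynomial $T(C_m;x,y)=y+\sum_{i=1}^{m-1}x^{i}$ is visibly irreducible (it is primitive and of degree $1$ in $y$), while Property \eqref{prodotto} handles the way blocks multiply.

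I expect the technical heart to be showing that the putative factorization with $Q(0,0)=0$ cannot be propagated along these relations as the ear decomposition is built up, since a sum of two irreducible Tutte polynomials can a priori be reducible and one must combine the recursion with the low-degree constraints of the second paragraph to rule this out. A more structural alternative, which I would keep in reserve, is to invoke the connectivity of the cycle matroid of $G$ and the behaviour of the Tutte polynomial under the matroid $2$-sum, thereby reducing irreducibility of $T(G;x,y)$ to the indecomposability of $G$ as a $2$-connected graph.
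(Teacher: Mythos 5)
First, a point of reference: the paper offers no proof of this statement at all --- it is quoted verbatim from \cite[Theorem 26]{machi}, which in turn reports the theorem of Merino, de~Mier and Noy on the irreducibility of the Tutte polynomial of a connected matroid. So your proposal must stand on its own, and its preliminary observations are indeed correct and are the standard opening moves: for a $2$-connected (hence loopless and bridgeless) graph the edgeless subgraph is the unique spanning subgraph of rank $0$ and $G$ itself is the unique one of nullity $n(G)$, so $T$ is monic of degree $r(G)\ge 1$ in $x$ over $\mathbb{Z}[y]$ and monic of degree $n(G)\ge 1$ in $y$ over $\mathbb{Z}[x]$; Gauss's lemma then forces any nontrivial factor to have positive degree in both variables with unit leading coefficients; and $t_{00}=0$ together with $t_{10}=t_{01}=\beta(G)>0$ pins down the constant and linear parts of the putative factors. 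But, as you say yourself, all of this is \emph{consistent} with a factorization, and the argument stops exactly where the theorem begins: no contradiction is ever derived. That is a genuine gap, not a missing detail.

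The route you propose for closing it --- induction on an open ear decomposition --- is the part I would not expect to work. Deletion--contraction along an ear of length $\ell$ gives $T(H';x,y)=(x^{\ell-1}+\cdots+x)\,T(H;x,y)+T(H+g;x,y)$, i.e., it expresses the new Tutte polynomial as a \emph{sum}, and irreducibility has no usable behaviour under sums; you acknowledge precisely this ("a sum of two irreducible Tutte polynomials can a priori be reducible") but supply no mechanism to overcome it, and the low-degree constraints of your second paragraph are too symmetric to do so, since they are themselves stable under the recursion. The known proof (Merino--de~Mier--Noy, which is what \cite[Theorem 26]{machi} records) is not an induction on the graph at all: it is a single global analysis of the coefficient array $(b_{ij})$ of the Tutte polynomial of a connected matroid, exploiting, beyond $\beta>0$, further positivity and vanishing properties of the $b_{ij}$ --- in particular the corner conditions $b_{r0}=b_{0n}=1$ with $b_{rj}=0$ for $j\ge 1$ and $b_{in}=0$ for $i\ge 1$, which force each factor to contain pure monomials $\pm x^{a}$ and $\pm y^{b}$ --- to show that the support of a product of two factors, each of positive degree in both variables, cannot match that of $T$. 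Your "reserve" alternative via matroid connectivity and $2$-sums is a pointer to that literature rather than an argument. As it stands, the proposal is an accurate list of necessary conditions on a factorization plus an unexecuted, and in my view unworkable, plan for the main step.
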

It follows that, since the graphs $\Gamma_n$ and $\Sigma_n$ are
$2$-connected, for each $n\geq 1$, their Tutte polynomials are
irreducible in $\mathbb{Z}[x,y]$.

In the next sections, we will be interested in special evaluations
of the Tutte polynomial, that allow us to deduce many
combinatorial and algebraic properties of the graphs considered.
In the following theorem, we collect many of these properties that
are well-known in literature.

\begin{teo} \cite[Theorems 3 and 8]{machi}\label{evaluations}
Let $G=(V,E)$ be a connected graph and denote by $T(G; x,y)$ its
Tutte polynomial. Then:
\begin{enumerate}
\item $T(G; 1,1) =\tau(G)$;
\item $T(G; 1,2)$ is the number of spanning connected subgraphs of $G$;
\item $T(G; 2,1)$ is the number of spanning forests  of $G$;
\item $T(G; 2,2)= 2^{|E|}$;
\item $T(G; 2,0)$ is the number of acyclic orientations of $G$, i.e., orientations having no oriented
cycles.
\end{enumerate}
\end{teo}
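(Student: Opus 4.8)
The plan is to read off items (1)--(4) directly from the spanning-subgraph expansion in Definition~\ref{defspanning}, and to treat item (5) separately via the deletion--contraction recursion of Definition~\ref{contracting}. Since $G$ is connected, $k(G)=1$ and hence $r(G)=|V|-1$; moreover for every spanning subgraph $A$ the exponents $r(G)-r(A)$ and $n(A)$ are nonnegative integers, so upon substituting integer values for $x$ and $y$ the factors $(x-1)$ and $(y-1)$ take the values $0$ or $1$ (with the convention $0^0=1$), and only the surviving terms need to be identified and counted.

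For item (4), set $x=y=2$: each factor $(x-1)$ and $(y-1)$ equals $1$, so every spanning subgraph contributes $1$ and $T(G;2,2)$ simply counts all spanning subgraphs, of which there are $2^{|E|}$ (each edge independently present or absent). For item (1), set $x=y=1$: the generic term becomes $0^{\,r(G)-r(A)}\,0^{\,n(A)}$, which is $1$ exactly when $r(A)=r(G)$ and $n(A)=0$, i.e. when $k(A)=1$ (so $A$ is connected and spanning) and $|E(A)|=|V|-1$; these two conditions characterize spanning trees, whence $T(G;1,1)=\tau(G)$. For item (2), set $x=1,\,y=2$: the term reduces to $0^{\,r(G)-r(A)}$, surviving precisely when $r(A)=r(G)$, i.e. when $A$ is a connected spanning subgraph. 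For item (3), set $x=2,\,y=1$: the term reduces to $0^{\,n(A)}$, surviving precisely when $n(A)=0$, i.e. when $A$ is cycle-free, a spanning forest. In each case the surviving terms contribute $1$, giving the asserted counts.

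Item (5) is the only one for which the naive substitution $(x,y)=(2,0)$ produces an alternating sum (because $y-1=-1$) that does not visibly count anything, so I would instead argue by induction on $|E|$ using Definition~\ref{contracting}. Writing $a(G)$ for the number of acyclic orientations, the goal is to show that $a(G)$ obeys the very recursion defining $T(\,\cdot\,;2,0)$. The base case is $G=E_1$, which has one (empty) acyclic orientation, matching $T(E_1;2,0)=1$. If $e$ is a loop then $G$ has no acyclic orientation, so $a(G)=0$, in agreement with the factor $y=0$. If $e$ is a bridge then $e$ lies in no cycle, so each of its two orientations combines freely with any acyclic orientation of $G\setminus e$, giving $a(G)=2\,a(G\setminus e)$, in agreement with the factor $x=2$. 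Finally, if $e=\{u,v\}$ is neither a loop nor a bridge, the key identity is $a(G)=a(G\setminus e)+a(G/e)$: any acyclic orientation of $G\setminus e$ extends to $G$ by orienting $e$ in at least one direction, since if neither direction were allowed then $G\setminus e$ would already contain a directed cycle; and it extends in both directions exactly when there is no directed $u$--$v$ path in $G\setminus e$, which is precisely the condition for it to descend to an acyclic orientation of $G/e$.

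The routine items are (1)--(4); the only real work lies in the deletion--contraction identity $a(G)=a(G\setminus e)+a(G/e)$ for acyclic orientations, which is the crux of (5) and rests on the classification of the extensions of an acyclic orientation of $G\setminus e$ across the edge $e$. Once this identity and the two degenerate cases (loop, bridge) are established, the equality $T(G;2,0)=a(G)$ follows by comparing recursions. As an alternative to this combinatorial route, I could appeal to Stanley's theorem expressing the number of acyclic orientations as $(-1)^{|V|}P(G;-1)$, together with the standard specialization of the chromatic polynomial $P(G;k)$ to the Tutte polynomial, but the inductive deletion--contraction argument is the more self-contained option here.
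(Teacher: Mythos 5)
Your proof is correct. Note, however, that the paper does not prove this theorem at all: it is quoted as a known result, with a pointer to Theorems 3 and 8 of the survey of Ellis-Monaghan and Merino, and is then only used as a black box to specialize the recursions for $T_n$ and $H_n$. What you have written is a self-contained proof of that cited result, and it is the standard one: items (1)--(4) fall out of the subgraph expansion of Definition~\ref{defspanning} once one observes that $r(G)-r(A)=k(A)-1\ge 0$ and $n(A)\ge 0$, so that substituting $1$ or $2$ for $x$ and $y$ turns each summand into an indicator of ``$A$ connected'' and/or ``$A$ acyclic''; and item (5) is Stanley's theorem, proved by checking that the number $a(G)$ of acyclic orientations satisfies the deletion--contraction recursion of Definition~\ref{contracting} at $(x,y)=(2,0)$. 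The only point worth tightening is the crux of (5): the statement ``it extends in both directions exactly when there is no directed $u$--$v$ path in $G\setminus e$'' should explicitly say \emph{in either direction} (the two directed paths cannot coexist by acyclicity of the orientation of $G\setminus e$, which is why every acyclic orientation of $G\setminus e$ extends in at least one way), and one should note that a directed cycle in $G/e$ corresponds either to a directed cycle of $G\setminus e$ or to a directed path between $u$ and $v$, which is what makes the ``extends in two ways'' orientations biject with the acyclic orientations of $G/e$. These are exactly the details implicit in your sketch, so the argument stands.
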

Let $R(G,p)$ the \emph{reliability polynomial} of the graph $G$.
For a random model where each edge of $G$ is independently chosen
to be active (or open) with probability $p$ or inactive (closed)
with probability $1-p$, it provides the probability that there is
a path of active edges between each pair of vertices of $G$. Next,
let $\chi(G, \lambda)$ be the \emph{chromatic polynomial} of $G$,
giving, for all values $\lambda$, the number of proper
$\lambda$-colorings of $G$. Finally, let $Z$ be the
\emph{partition function} of the Ising model, which is obtained as
a special case of the $Q$-Potts model on $G$, for $Q=2$.

\noindent The connection with the Tutte polynomial is given by the
following theorem.
\begin{teo}\cite{machi, potts}\label{twopolynomials}
 Let $G=(V,E)$ be a graph. Then,
\begin{enumerate}
\item $\displaystyle
R(G,p)=p^{|V(G)|-1}(1-p)^{|E(G)|-|V(G)|+1}T\left(G;1,\frac{1}{1-p}\right)$;
\item $\chi(G, \lambda) = (-1)^{r(G)}\lambda ^{k(G)}
T(G;1-\lambda,0)$;
\item $\displaystyle
Z = 2(e^{2\beta J}-1)^{|V(G)|-1}e^{-\beta
J|E(G)|}T\left(G;\frac{e^{2\beta J}+1}{e^{2\beta J}-1},e^{2\beta
J}\right)$, where $J$ is a positive constant and $\beta$ is the
\lq\lq inverse temperature\rq\rq.
\end{enumerate}
\end{teo}

\subsection{Groups of automorphisms of rooted regular trees}

Let $T_q$ be the infinite regular rooted tree of degree $q$, i.e.,
the rooted tree in which each vertex has $q$ children. Each vertex
of the $n$-th level of the tree can be regarded as a word of
length $n$ in the alphabet $X=\{0,1,\ldots, q-1\}$. Moreover, one
can identify the set $X^{\omega}$ of infinite words in $X$ with
the set $\partial T_q$ of infinite geodesic rays starting at the
root of $T_q$. Next, let $S<Aut(T_q)$ be a group acting on $T_q$
by automorphisms generated by a finite symmetric set of generators
$Y$. Moreover, suppose that the action is transitive on each level
of the tree.

\begin{defi}\label{defischreiernovembre}
The $n$-th  \emph{Schreier graph} of the action of $S$ on $T_q$,
with respect to the generating set $Y$, is a graph whose vertex
set coincides with the set of vertices of the $n$-th level of the
tree, and two vertices $u,v$ are adjacent if and only if there
exists $s\in Y$ such that $s(u)=v$. If this is the case, the edge
joining $u$ and $v$ is labelled by $s$.
\end{defi}
The vertices of this graph are labelled by words of length $n$ in
$X$ and the edges are labelled by elements of $Y$. The Schreier
graph is thus a regular graph of degree $|Y|$ with $q^n$ vertices,
and it is connected, since the action of $S$ is level-transitive.

\begin{defi}\cite{volo}\label{defiselfsimilar}
A finitely generated group $S<Aut(T_q)$ is  \emph{self-similar} if,
for all $g\in S, x\in X$, there exist $h\in S, y\in X$ such that
$$
g(xw)=yh(w),
$$
for all finite words $w$ in the alphabet $X$.
\end{defi}

Self-similarity implies that $S$ can be embedded into the wreath
product $Sym(q)\wr S$, where $Sym(q)$ denotes the symmetric group
on $q$ elements, so that any automorphism $g\in S$ can be
represented as
$$
g=\tau(g_0,\ldots,g_{q-1}),
$$
where $\tau\in Sym(q)$ describes the action of $g$ on the first
level of $T_q$ and $g_i\in S, i=0,...,q-1$, is the restriction of
$g$ on the full subtree of $T_q$ rooted at the vertex $i$ of the
first level of $T_q$ (observe that any such subtree is isomorphic
to $T_q$). Hence, if $x\in X$ and $w$ is a finite word in $X$, we
have
$$
g(xw)=\tau(x)g_x(w).
$$
\indent The class of self-similar groups contains many interesting
examples of groups which have exotic properties: for instance, the
first Grigorchuk group and the Basilica group. We recall here that
the first Grigorchuk group was the first example of a group of
intermediate growth (see \cite{grigorchuk} for a detailed account
and further references). As regards the Basilica group, it was
introduced by R. Grigorchuk and A. \.{Z}uk in \cite{primo} as a
group generated by a three-state automaton; it is an example of an
amenable group (see \cite{amen}) not belonging to the class of
subexponentially amenable groups and it can be described as an
iterated monodromy group \cite{volo}.\\ \indent In Section
\ref{sectionhanoi}, we will describe the Schreier graphs
$\{\Sigma_n\}_{n\geq 1}$ of the Hanoi Towers group $H^{(3)}$,
using the self-similar representation of its generators. In
Section \ref{4.2}, we will compute the Tutte polynomial of the
graphs $\{\Sigma_n\}_{n\geq 1}$, using their self-similar
structure.

\section{The Tutte polynomial of the Sierpi\'{n}ski graphs}\label{sezione serpin}

In this section we study the Tutte polynomial of a sequence of
graphs $\{\Gamma_n\}_{n\geq 1}$, approximating the famous
Sierpi\'{n}ski gasket. For each $n\geq 1$, the graph $\Gamma_n$ is
very close to the Schreier graph $\Sigma_n$ of the group $H^{(3)}$
considered in the next section. More precisely, one can obtain
$\Gamma_n$ from $\Sigma_n$ by removing loops and contracting, at
each step, all the \emph{special edges} of $\Sigma_n$, joining two
different elementary triangles (see Section \ref{sectionhanoi}).
The graph $\Gamma_n$ is also self-similar in the sense of
\cite{wagner2}, as can be seen in the following picture,
\begin{center}\unitlength=0.2mm
\begin{picture}(400,105)
\put(35,30){$\Gamma_1$}\put(180,30){$\Gamma_n$}

\letvertex A=(100,60)\letvertex B=(70,10)\letvertex C=(130,10)
\letvertex D=(270,110)\letvertex E=(240,60)\letvertex F=(210,10)\letvertex G=(270,10)
\letvertex H=(330,10)\letvertex I=(300,60)

\put(258,70){$G_1$}\put(227,20){$G_2$}\put(287,20){$G_3$}

\drawvertex(A){$\bullet$}\drawvertex(B){$\bullet$}
\drawvertex(C){$\bullet$}\drawvertex(D){$\bullet$}
\drawvertex(E){$\bullet$}\drawvertex(F){$\bullet$}
\drawvertex(G){$\bullet$}\drawvertex(H){$\bullet$}
\drawvertex(I){$\bullet$}

\put(216,59){$v_3$}\put(264,-5){$v_1$}\put(307,59){$v_2$}

\drawundirectededge(B,A){} \drawundirectededge(C,B){}
\drawundirectededge(A,C){} \drawundirectededge(E,D){}
\drawundirectededge(F,E){} \drawundirectededge(G,F){}
\drawundirectededge(H,G){} \drawundirectededge(I,H){}
\drawundirectededge(D,I){} \drawundirectededge(I,E){}
\drawundirectededge(E,G){} \drawundirectededge(G,I){}
\end{picture}
\end{center}
where the subgraphs $G_1, G_2, G_3$ of $\Gamma_n$ are isomorphic
to $\Gamma_{n-1}$, and they are joint at the vertices $v_1, v_2$
and $v_3$, called \emph{special vertices} of $\Gamma_n$. Note that
$v_i\in G_j$, for each $i\neq j$. It is easy to prove by induction
the following equalities:
$$
|V(\Gamma_n)| = \frac{3^n+3}{2} \qquad \qquad |E(\Gamma_n)|=3^n.
$$
We want to compute the Tutte polynomial $T(\Gamma_n;x,y)$ by using
Definition \ref{defspanning}. First of all, we define the
following partition of the set of the spanning subgraphs of
$\Gamma_n$:
\begin{itemize}
\item $D_{2,n}$ denotes the set of spanning subgraphs of $\Gamma_n$,
where the three outmost vertices belong to the same connected
component;
\item $D_{1,n}^u$ denotes the set of spanning subgraphs of $\Gamma_n$, where the leftmost and rightmost
vertices belong to the same connected component, and the upmost
one belongs to a different connected component. Similarly, by
rotation, $D_{1,n}^r$ (respectively $D_{1,n}^l$) denotes the set
of spanning subgraphs of $\Gamma_n$, where the rightmost
(respectively leftmost) vertex is not in the same connected
component containing the two other outmost vertices;
\item $D_{0,n}$ denotes the set of spanning subgraphs of $\Gamma_n$, where the three outmost
vertices belong to three different connected components.
\end{itemize}
To draw a subgraph of $\Gamma_n$ of the previous types, we will
use the following notation.
\begin{center}\unitlength=0.35mm
\begin{picture}(400,40)
\letvertex A=(25,35)\letvertex B=(10,10)
\letvertex C=(40,10)

\letvertex a=(105,35)\letvertex b=(90,10)
\letvertex c=(120,10)

\letvertex aa=(185,35)\letvertex bb=(170,10)
\letvertex cc=(200,10)

\letvertex aaa=(265,35)\letvertex bbb=(250,10)
\letvertex ccc=(280,10)

\letvertex AAA=(345,35)\letvertex BBB=(330,10)
\letvertex CCC=(360,10)

\put(15,-5){$D_{2,n}$}\put(95,-5){$D_{1,n}^u$}\put(175,-5){$D_{1,n}^r$}\put(255,-5){$D_{1,n}^l$}\put(335,-5){$D_{0,n}$}

\dashline[0]{4}(90,10)(105,35)
\dashline[0]{4}(120,10)(105,35)

\dashline[0]{4}(200,10)(185,35) \dashline[0]{4}(200,10)(170,10)

\dashline[0]{4}(250,10)(265,35)
\dashline[0]{4}(250,10)(280,10)

\dashline[0]{4}(330,10)(345,35)
\dashline[0]{4}(360,10)(345,35)
\dashline[0]{4}(330,10)(360,10)

\drawundirectededge(A,B){} \drawundirectededge(B,C){}
\drawundirectededge(C,A){}

\drawundirectededge(aa,bb){}

\drawundirectededge(aaa,ccc){}

%\drawundirectededge(a,b){}\drawundirectededge(c,a){}

\drawundirectededge(b,c){}

%\drawundirectededge(AA,BB){} \drawundirectededge(BB,CC){}
%\drawundirectededge(CC,AA){}

\drawvertex(a){$\bullet$}\drawvertex(b){$\bullet$}
\drawvertex(c){$\bullet$}
\drawvertex(A){$\bullet$}\drawvertex(B){$\bullet$}
\drawvertex(C){$\bullet$}
\drawvertex(AAA){$\bullet$}\drawvertex(BBB){$\bullet$}
\drawvertex(CCC){$\bullet$}

\drawvertex(aa){$\bullet$}\drawvertex(bb){$\bullet$}
\drawvertex(cc){$\bullet$}
\drawvertex(aaa){$\bullet$}\drawvertex(bbb){$\bullet$}
\drawvertex(ccc){$\bullet$}
\end{picture}
\end{center}
Hence, with our convention, a black line joining two outmost
vertices in a diagram means that there is a path joining them in
the subgraph and so they are in the same connected component. In
the following pictures, we give explicit examples of spanning
subgraphs of $\Gamma_3$, which are in $D_{2,3}, D^u_{1,3}$ and
$D_{0,3}$, respectively.
\begin{center}
\begin{picture}(515,115)\unitlength=0.15mm

\letvertex A=(170,210)\letvertex B=(140,160)\letvertex C=(110,110)
\letvertex D=(80,60)\letvertex E=(50,10)\letvertex F=(110,10)\letvertex G=(170,10)
\letvertex H=(230,10)\letvertex I=(290,10)
\letvertex L=(260,60)\letvertex M=(230,110)\letvertex N=(200,160)
\letvertex O=(170,110)\letvertex P=(140,60)\letvertex Q=(200,60)

\drawvertex(A){$\bullet$}\drawvertex(B){$\bullet$}
\drawvertex(C){$\bullet$}\drawvertex(D){$\bullet$}
\drawvertex(E){$\bullet$}\drawvertex(F){$\bullet$}
\drawvertex(G){$\bullet$}\drawvertex(H){$\bullet$}
\drawvertex(I){$\bullet$}\drawvertex(L){$\bullet$}\drawvertex(M){$\bullet$}
\drawvertex(N){$\bullet$}\drawvertex(O){$\bullet$}
\drawvertex(P){$\bullet$}\drawvertex(Q){$\bullet$}

%\drawundirectededge(B,A){}\drawundirectededge(C,B){}\drawundirectededge(N,B){}\drawundirectededge(B,O){}
%\drawundirectededge(C,P){}
%\drawundirectededge(P,D){}\drawundirectededge(D,F){}\drawundirectededge(F,E){}\drawundirectededge(H,G){}
%\drawundirectededge(I,H){}\drawundirectededge(H,L){}\drawundirectededge(L,Q){}\drawundirectededge(Q,M){}

\dashline[0]{4}(140,160)(110,110) \dashline[0]{4}(140,160)(200,160) \dashline[0]{4}(140,160)(170,210)
\dashline[0]{4}(140,160)(170,110) \dashline[0]{4}(110,110)(140,60)\dashline[0]{4}(80,60)(140,60) \dashline[0]{4}(80,60)(110,10) \dashline[0]{4}(110,10)(50,10) \dashline[0]{4}(230,10)(290,10)
\dashline[0]{4}(230,10)(260,60)
\dashline[0]{4}(230,10)(170,10)
\dashline[0]{4}(200,60)(260,60)  \dashline[0]{4}(200,60)(230,110)

\drawundirectededge(E,D){} \drawundirectededge(D,C){}
 \drawundirectededge(A,N){}\drawundirectededge(G,F){}
\drawundirectededge(N,M){} \drawundirectededge(M,L){}
\drawundirectededge(L,I){} \drawundirectededge(O,C){}
\drawundirectededge(M,O){} \drawundirectededge(O,N){}
 \drawundirectededge(P,G){}
 \drawundirectededge(G,Q){}
\drawundirectededge(F,P){} \drawundirectededge(Q,H){}

%%%%%%%%%%%%%%%%%%%%%%%%%%%%%%%%%%%%%%%%%%%%%%%%%%%%%%%%%%%%%%%%%%%%%%
\letvertex AA=(510,210)\letvertex BB=(480,160)\letvertex CC=(450,110)
\letvertex DD=(420,60)\letvertex EE=(390,10)\letvertex FF=(450,10)\letvertex GG=(510,10)
\letvertex HH=(570,10)\letvertex II=(630,10)
\letvertex LL=(600,60)\letvertex MM=(570,110)\letvertex NN=(540,160)
\letvertex OO=(510,110)\letvertex PP=(480,60)\letvertex QQ=(540,60)

\drawvertex(AA){$\bullet$}\drawvertex(BB){$\bullet$}
\drawvertex(CC){$\bullet$}\drawvertex(DD){$\bullet$}
\drawvertex(EE){$\bullet$}\drawvertex(FF){$\bullet$}
\drawvertex(GG){$\bullet$}\drawvertex(HH){$\bullet$}
\drawvertex(II){$\bullet$}\drawvertex(LL){$\bullet$}\drawvertex(MM){$\bullet$}
\drawvertex(NN){$\bullet$}\drawvertex(OO){$\bullet$}
\drawvertex(PP){$\bullet$}\drawvertex(QQ){$\bullet$}

\drawundirectededge(BB,AA){}\drawundirectededge(CC,PP){}\drawundirectededge(OO,CC){}
\drawundirectededge(NN,BB){}\drawundirectededge(EE,DD){}\drawundirectededge(MM,OO){}\drawundirectededge(PP,GG){}
\drawundirectededge(PP,DD){}\drawundirectededge(HH,GG){}\drawundirectededge(HH,LL){}
\drawundirectededge(QQ,MM){}\drawundirectededge(GG,QQ){}\drawundirectededge(LL,II){}

%\drawundirectededge(DD,CC){}\drawundirectededge(CC,BB){} \drawundirectededge(AA,NN){}
%\drawundirectededge(NN,MM){} %\drawundirectededge(MM,LL){}\drawundirectededge(QQ,HH){}
% \drawundirectededge(II,HH){} \drawundirectededge(GG,FF){} %\drawundirectededge(FF,EE){} \drawundirectededge(LL,QQ){} %\drawundirectededge(BB,OO){}
%\drawundirectededge(OO,NN){}
%\drawundirectededge(DD,FF){} \drawundirectededge(FF,PP){}

\dashline[0]{4}(600,60)(540,60)
\dashline[0]{4}(450,10)(390,10)
\dashline[0]{4}(450,10)(510,10)
\dashline[0]{4}(450,10)(420,60)
\dashline[0]{4}(450,10)(480,60)
\dashline[0]{4}(570,110)(600,60)
\dashline[0]{4}(510,110)(480,160)
\dashline[0]{4}(570,10)(540,60)
\dashline[0]{4}(570,10)(630,10)
\dashline[0]{4}(450,110)(420,60)
\dashline[0]{4}(540,160)(510,210)
\dashline[0]{4}(540,160)(570,110)
\dashline[0]{4}(540,160)(510,110)
\dashline[0]{4}(450,110)(480,160)

%%%%%%%%%%%%%%%%%%%%%%%%%%%%%%%%%%%%%%%%%%%%%%%%%%%%%%%%%%%%%%%%%%%%%%%%%%%%%%%%%%%%%%%%%%%%%%%%%%%%%

\letvertex AAA=(850,210)\letvertex BBB=(820,160)\letvertex CCC=(790,110)
\letvertex DDD=(760,60)\letvertex EEE=(730,10)\letvertex FFF=(790,10)\letvertex GGG=(850,10)
\letvertex HHH=(910,10)\letvertex III=(970,10)
\letvertex LLL=(940,60)\letvertex MMM=(910,110)\letvertex NNN=(880,160)
\letvertex OOO=(850,110)\letvertex PPP=(820,60)\letvertex QQQ=(880,60)

\drawvertex(AAA){$\bullet$}\drawvertex(BBB){$\bullet$}
\drawvertex(CCC){$\bullet$}\drawvertex(DDD){$\bullet$}
\drawvertex(EEE){$\bullet$}\drawvertex(FFF){$\bullet$}
\drawvertex(GGG){$\bullet$}\drawvertex(HHH){$\bullet$}
\drawvertex(III){$\bullet$}\drawvertex(LLL){$\bullet$}\drawvertex(MMM){$\bullet$}
\drawvertex(NNN){$\bullet$}\drawvertex(OOO){$\bullet$}
\drawvertex(PPP){$\bullet$}\drawvertex(QQQ){$\bullet$}

\drawundirectededge(BBB,AAA){}\drawundirectededge(MMM,OOO){}\drawundirectededge(QQQ,MMM){}\drawundirectededge(GGG,QQQ){}
\drawundirectededge(NNN,BBB){}\drawundirectededge(OOO,NNN){}\drawundirectededge(HHH,LLL){}\drawundirectededge(LLL,III){}
\drawundirectededge(III,HHH){}\drawundirectededge(FFF,EEE){}
\drawundirectededge(DDD,CCC){}\drawundirectededge(EEE,DDD){}

%\drawundirectededge(CCC,BBB){} \drawundirectededge(AAA,NNN){}
%\drawundirectededge(NNN,MMM){} \drawundirectededge(MMM,LLL){}
%\drawundirectededge(HHH,GGG){} \drawundirectededge(GGG,FFF){}
%\drawundirectededge(OOO,CCC){}\drawundirectededge(PPP,DDD){}
%\drawundirectededge(LLL,QQQ){} \drawundirectededge(BBB,OOO){}
% \drawundirectededge(CCC,PPP){}\drawundirectededge(PPP,GGG){} \drawundirectededge(DDD,FFF){}
%\drawundirectededge(FFF,PPP){} \drawundirectededge(QQQ,HHH){}

%%%%%%%%%%%%%%%%%%%%%%%%%%%%%%%%%%%%%%%%%%%%%%%%%%%%%%%%%%%%%%%%%%%%%%%%%%%%%5

\dashline[0]{4}(790,110)(820,160)
\dashline[0]{4}(790,110)(850,110)
\dashline[0]{4}(790,110)(820,60)
\dashline[0]{4}(820,60)(760,60)
\dashline[0]{4}(820,60)(850,10)
\dashline[0]{4}(820,60)(790,10)
\dashline[0]{4}(790,10)(850,10)
\dashline[0]{4}(790,10) (760,60)

\dashline[0]{4}(850,10)(910,10)

\dashline[0]{4}(910,10)(880,60)
\dashline[0]{4}(850,210) (880,160)
\dashline[0]{4}(880,160)(910,110)
\dashline[0]{4}(910,110)(940,60)
\dashline[0]{4}(940,60)(880,60)
\dashline[0]{4}(820,160)(850,110)
\end{picture}
\end{center}
Observe that, for each $n\geq 1$, we have the partition
$$
D_{2,n} \sqcup D_{1,n}^u\sqcup D_{1,n}^r\sqcup D_{1,n}^l\sqcup
D_{0,n}
$$
of the set of spanning subgraphs of $\Gamma_n$. Next, let us
simply denote by $T_n(x,y)$ the Tutte polynomial $T(\Gamma_n;x,y)$
of $\Gamma_n$ and define, for every $n\geq 1$, the following
polynomials:
\begin{itemize}
\item $\displaystyle T_{2,n}(x,y)= \sum_{A\in D_{2,n}}(x-1)^{r(\Gamma_n)-r(A)}(y-1)^{n(A)}$;
\item $\displaystyle T_{1,n}^u(x,y)= \sum_{A\in D_{1,n}^u}(x-1)^{r(\Gamma_n)-r(A)}(y-1)^{n(A)}$;
%\item $\displaystyle T_{1,n}^r(x,y)= \sum_{A\in D_{1,n}^r}(x-1)^{r(\Gamma_n)-r(A)}(y-1)^{n(A)}$;
%\item $\displaystyle T_{1,n}^l(x,y)= \sum_{A\in D_{1,n}^l}(x-1)^{r(\Gamma_n)-r(A)}(y-1)^{n(A)}$;
\item $\displaystyle T_{0,n}(x,y)= \sum_{A\in D_{0,n}}(x-1)^{r(\Gamma_n)-r(A)}(y-1)^{n(A)}$.
\end{itemize}
Similarly, we define $T_{1,n}^r(x,y)$ and $T_{1,n}^l(x,y)$, by
taking sums over $D_{1,n}^r$ and $D_{1,n}^l$, respectively. Note
that, by the rotational-invariance of the graph $\Gamma_n$, one
has
$$
T_{1,n}^u(x,y) =  T_{1,n}^r(x,y) = T_{1,n}^l(x,y),
$$
so that we can simply use the notation $T_{1,n}(x,y)$ to denote
one of these three polynomials. According with Definition
\ref{defspanning} of the Tutte polynomial, we have:
$$
T_n(x,y) = T_{2,n}(x,y) + 3T_{1,n}(x,y) + T_{0,n}(x,y).
$$
In order to give a recursive formula for $T_n(x,y)$, we provide
recursive formulas for $T_{2,n}(x,y)$, $T_{1,n}(x,y)$ and
$T_{0,n}(x,y)$ (Theorem \ref{ricorsivesierpinski}). For this
purpose, we have to analyze the relation between spanning
subgraphs of $\Gamma_{n+1}$ and spanning subgraphs of $\Gamma_n$.
The following key observation holds.

\emph{There exists a bijection between spanning subgraphs of
$\Gamma_{n+1}$ and spanning subgraphs of the copies $G_1, G_2,
G_3$ of $\Gamma_{n}$ inside $\Gamma_{n+1}$. This bijection is
induced by restrictions.}

Indeed,  the restriction of a spanning subgraph $A$ of
$\Gamma_{n+1}$ to the copies $G_1, G_2$ and $G_3$ of $\Gamma_{n}$
uniquely determines three spanning subgraphs $A_1$, $A_2$ and
$A_3$  of $\Gamma_{n}$; viceversa, given three spanning subgraphs
$A_1$, $A_2$ and $A_3$ of $G_1, G_2$ and $G_3$, respectively, then
their union provides a spanning subgraph $A$ of the whole
$\Gamma_{n+1}$.\\ \indent Therefore, Equation \eqref{defsubgraphs}
in Definition \ref{defspanning} for $\Gamma_{n+1}$ can be
rewritten as
$$
T_{n+1}(x,y) = \sum_{A_i\subseteq G_i, i=1,2,3
}(x-1)^{r(\Gamma_{n+1})-r(A)}(y-1)^{n(A)},
$$
where $A_i$ is the restriction of $A$ to the subgraph $G_i$ of
$\Gamma_{n+1}$.

Next, we need to understand how $r(A)$ and $n(A)$ depend on
$r(A_i)$ and $n(A_i)$, for $i=1,2,3$. Firstly, observe that
$r(\Gamma_{n+1})=3r(\Gamma_{n})-1$ and
$|V(A)|=|V(A_1)|+|V(A_2)|+|V(A_3)|-3$, for every spanning subgraph
$A$ of $\Gamma_{n+1}$. Furthermore, two possibilities can occur.\\
\indent If in the spanning subgraph $A$, obtained by the union of
$A_1,A_2$ and $A_3$, the three special vertices are in the same
connected component, and the two special vertices $v_i,v_j\in A_k$
are in the same connected component of $A_k$ for any $k=1,2,3$,
then  it is easy to check that
$$
k(A) = k(A_1) +k(A_2) + k(A_3) -2 \qquad \mbox{and} \qquad r(A) =
r(A_1) + r(A_2) + r(A_3) -1.
$$
Moreover, one has
\begin{eqnarray*}
n(A)&=&(|E(A_1)|+|E(A_2)|+|E(A_3)|)-(|V(A_1)|+|V(A_2)|+|V(A_3)|-3)\\&+&
(k(A_1) +k(A_2) + k(A_3) -2)\\
&=& n(A_1) + n(A_2) + n(A_3) + 1.
\end{eqnarray*}
Hence, for such a spanning subgraph $A$ of $\Gamma_{n+1}$ (of
\lq\lq first type\rq\rq), one gets:
$$
r(\Gamma_{n+1})-r(A) = \sum_{i=1}^3 (r(\Gamma_{n})-r(A_i)) \qquad
\mbox { and } \qquad n(A)=n(A_1) + n(A_2) + n(A_3) + 1
$$
and so
$$
(x-1)^{r(\Gamma_{n+1})-r(A)}(y-1)^{n(A)}=(y-1) \prod_{i=1}^3
(x-1)^{r(\Gamma_{n})-r(A_i)}(y-1)^{n(A_i)}.
$$
On the other hand, if in the spanning subgraph $A$ obtained by the
union of $A_1,A_2$ and $A_3$ there are two special vertices $v_i,
v_j \in A_k$ which do not belong to the same connected component
of $A_k$, for some $k=1,2,3$ and $i,j\neq k$, it is easy to verify
that
$$
k(A) = k(A_1) +k(A_2) + k(A_3) -3 \qquad \mbox{and} \qquad r(A) =
r(A_1) + r(A_2) + r(A_3).
$$
Moreover, one has in this case
\begin{eqnarray*}
n(A)
&=&(|E(A_1)|+|E(A_2)|+|E(A_3)|)-(|V(A_1)|+|V(A_2)|+|V(A_3)|-3)\\&+&
(k(A_1) +k(A_2) + k(A_3) -3)\\ &=& n(A_1) + n(A_2) + n(A_3).
\end{eqnarray*}
Hence, for such a spanning subgraph $A$ of $\Gamma_{n+1}$ (of
\lq\lq second type\rq\rq), one gets:
$$
r(\Gamma_{n+1})-r(A) = \sum_{i=1}^3 (r(\Gamma_{n})-r(A_i))-1
\qquad \mbox { and } \qquad n(A)=n(A_1) + n(A_2) + n(A_3)
$$
and so
$$
(x-1)^{r(\Gamma_{n+1})-r(A)}(y-1)^{n(A)}=   \frac{1}{(x-1)}
\prod_{i=1}^3   (x-1)^{r(\Gamma_{n})-r(A_i)}(y-1)^{n(A_i)}.
$$

\begin{teo}\label{ricorsivesierpinski}
For each $n\geq 1$, the Tutte polynomial $T_n(x,y)$ of $\Gamma_n$
is given by
$$
T_n(x,y)=T_{2,n}(x,y)+3T_{1,n}(x,y)+T_{0,n}(x,y),
$$
where the polynomials $T_{2,n}(x,y)$, $T_{1,n}(x,y)$,
$T_{0,n}(x,y) \in \mathbb{Z}[x,y]$ satisfy the following recursive
relations:
\begin{eqnarray}\label{t2n}
T_{2,n+1}(x,y) &=& (y-1)T_{2,n}^3 +
\frac{1}{x-1}\left(6T_{2,n}^2T_{1,n}+3T_{2,n}T_{1,n}^2\right)
\end{eqnarray}
\begin{eqnarray}\label{t1n}
T_{1,n+1}(x,y)&=&
(y-1)T_{2,n}^2T_{1,n}+\frac{1}{x-1}\left(T_{2,n}^2T_{0,n}+7T_{2,n}T_{1,n}^2\right.\\&+&\left.2T_{2,n}T_{1,n}T_{0,n}+4T_{1,n}^3
+T_{1,n}^2T_{0,n}\right)\nonumber
\end{eqnarray}
\begin{eqnarray}\label{t0n}
T_{0,n+1}(x,y)&=&
(y-1)\left(3T_{2,n}T_{1,n}^2+T_{1,n}^3\right)+\frac{1}{x-1}\left(12T_{2,n}T_{1,n}T_{0,n}\right.
\\
&+&\left.3T_{2,n}T_{0,n}^2+14T_{1,n}^3+24T_{1,n}^2T_{0,n}
+9T_{1,n}T_{0,n}^2+T_{0,n}^3\right),\nonumber
\end{eqnarray}
with initial conditions
$$
T_{2,1}(x,y)=y+2 \qquad T_{1,1}(x,y)=x-1 \qquad T_{0,1}(x,y)=(x-1)^2.
$$
\end{teo}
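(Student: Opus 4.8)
The plan is to convert the two weight factorizations derived immediately before the statement into a finite bookkeeping over connectivity classes. By the bijection between spanning subgraphs of $\Gamma_{n+1}$ and triples $(A_1,A_2,A_3)$ of spanning subgraphs of the copies $G_1,G_2,G_3\cong\Gamma_n$, every spanning subgraph $A$ of $\Gamma_{n+1}$ is the union of its three restrictions, and its weight $(x-1)^{r(\Gamma_{n+1})-r(A)}(y-1)^{n(A)}$ equals $(y-1)\prod_i w(A_i)$ in the first-type case and $\frac{1}{x-1}\prod_i w(A_i)$ in the second-type case, where $w(A_i)=(x-1)^{r(\Gamma_n)-r(A_i)}(y-1)^{n(A_i)}$. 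The key observation is that both the type and the class ($D_{2,n+1}$, $D^u_{1,n+1}$, $D^r_{1,n+1}$, $D^l_{1,n+1}$ or $D_{0,n+1}$) of $A$ are determined solely by the classes of $A_1,A_2,A_3$ in $\Gamma_n$: every quantity that enters depends only on how the three special (shared) vertices and the three outmost vertices of $\Gamma_{n+1}$ are distributed among the connected components of $A$, and that partition is the transitive closure, along the shared special vertices, of the three boundary partitions prescribed by the classes of the $A_i$. Hence the coefficients in \eqref{t2n}--\eqref{t0n} are pure counts, and it suffices to run over the $5^3$ class-combinations.

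Concretely, I would first fix the combinatorics of the gluing: each copy $G_i$ has three outmost vertices of its own, one of which becomes a global outmost vertex of $\Gamma_{n+1}$ while the other two are special vertices identified with outmost vertices of the neighbouring copies. The class of $A_i$ is exactly a partition of these three vertices, so I would tabulate, for each copy, which partition of $\{\text{up},\text{left},\text{right}\}$ corresponds to each of $D_{2,n}, D^u_{1,n}, D^r_{1,n}, D^l_{1,n}, D_{0,n}$, translate it into the shared labelling, merge the three partitions, and restrict to the three global outmost vertices to obtain the class of $A$. The delicate point is the type: the extra factor $(y-1)$ (equivalently $n(A)=n(A_1)+n(A_2)+n(A_3)+1$) occurs precisely when the three gluings close up a cycle, that is, exactly when \emph{each} copy joins its own two special vertices; it is \emph{not} enough that the three special vertices merely end up in a single component of $A$, since they can also be joined in a tree-like fashion, which is a second-type configuration. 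Getting this distinction right, combination by combination, is the crux of the argument.

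Once every class-combination has been labelled with its type and target class, I would assemble the recursion by summing the factorized weights. Since type and target depend only on the classes, and since $\sum_{A_i\in D_{\bullet,n}}w(A_i)=T_{\bullet,n}$, each admissible combination contributes (type factor)$\times$(product of the three corresponding $T_{\bullet,n}$). Grouping by target class and type then reproduces the stated formulas: among the combinations with target class $D_{2,n+1}$, only $(D_{2,n},D_{2,n},D_{2,n})$ is of the first type, giving $(y-1)T_{2,n}^3$, while all remaining target-$D_{2,n+1}$ combinations are of the second type and sum to $\frac{1}{x-1}\bigl(6T_{2,n}^2T_{1,n}+3T_{2,n}T_{1,n}^2\bigr)$, which is exactly \eqref{t2n}. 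The same enumeration, read off against the target classes $D^u_{1,n+1}$ and $D_{0,n+1}$, yields \eqref{t1n} and \eqref{t0n}; here I would exploit the rotational invariance of $\Gamma_{n+1}$ to set $T^u_{1,n+1}=T^r_{1,n+1}=T^l_{1,n+1}=:T_{1,n+1}$ and compute only the combinations landing in $D^u_{1,n+1}$, which accounts for the asymmetric-looking integer coefficients in those two identities.

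Finally I would check the initial conditions at $n=1$, where $\Gamma_1=K_3$: enumerating the $2^3=8$ spanning subgraphs of the triangle and computing each weight $(x-1)^{k(A)-1}(y-1)^{n(A)}$ directly, the four subgraphs whose three vertices lie in one component sum to $T_{2,1}=y+2$, each of the three one-edge subgraphs gives $T_{1,1}=x-1$, and the empty subgraph gives $T_{0,1}=(x-1)^2$, as claimed. The main obstacle is not any single step but the scale and fragility of the case analysis: there are $125$ class-combinations (only partly reduced by the $S_3$-symmetry of the boundary triangle), and for each one must correctly compute the merged partition, the resulting class, and above all the type. The cycle criterion for the first type is the easiest thing to get wrong, since it is tempting to replace it by the weaker condition that the special vertices share a component; keeping these two conditions apart, and tracking the multiplicities in \eqref{t2n}, \eqref{t1n} and \eqref{t0n} without error, is where the real work lies.
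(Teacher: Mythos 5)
Your proposal is correct and follows essentially the same route as the paper: the restriction bijection, the two weight factorizations, and a finite case analysis of the class-combinations $(A_1,A_2,A_3)$ grouped by target class and reduced by the symmetries of the triangle, with the $K_3$ initial conditions checked directly. Your statement of the first-type criterion is the right one --- the factor $(y-1)$ appears exactly when each copy joins its own two special vertices, so that the gluing raises the cycle rank by one --- and you are right to distrust the weaker condition that the three special vertices merely share a component: the paper's prose asserts that weaker dichotomy, but its own configuration count (for instance the term $\frac{6}{x-1}T_{2,n}^2T_{1,n}$ in \eqref{t2n}, which comes from configurations where all three special vertices do lie in one component yet no cycle is closed) is consistent only with your cycle criterion.
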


\begin{proof}
The initial conditions are easy to be verified. The strategy of
the proof is to study all the possible choices of spanning
subgraphs $A_i$ in the three copies $G_i$ of $\Gamma_n$ inside
$\Gamma_{n+1}$, for $i=1,2,3$, and analyze which kind of
contribution they give to $T_{2,n+1}(x,y)$, $T_{1,n+1}(x,y)$ and
$T_{0,n+1}(x,y)$.

Let us start by studying which configurations of spanning
subgraphs $A_i$, for $i=1,2,3$, give a contribution to
$T_{2,n+1}(x,y)$. First, we have the following configuration.
\begin{center}
\begin{picture}(400,55)

\letvertex A=(200,55)\letvertex B=(185,30)
\letvertex C=(170,5)\letvertex D=(200,5)
\letvertex E=(230,5)\letvertex F=(215,30)

\drawundirectededge(A,B){} \drawundirectededge(B,C){}
\drawundirectededge(C,D){} \drawundirectededge(D,E){}
\drawundirectededge(E,F){}\drawundirectededge(F,A){}\drawundirectededge(B,F){}\drawundirectededge(B,D){}\drawundirectededge(D,F){}

\drawvertex(A){$\bullet$}\drawvertex(B){$\bullet$}
\drawvertex(C){$\bullet$}\drawvertex(D){$\bullet$}
\drawvertex(E){$\bullet$}\drawvertex(F){$\bullet$}
\end{picture}
\end{center}
Here, we choose each $A_i$ in $D_{2,n}$; this contributes by a
term $(y-1)T_{2,n}^3$, since in this subgraph the three special
vertices are connected (first type). Moreover, we have two other
possibilities, concerning spanning subgraphs of the second type,
represented in the following two pictures.
\begin{center}
\begin{picture}(400,60)
\letvertex A=(120,60)\letvertex B=(105,35)
\letvertex C=(90,10)\letvertex D=(120,10)
\letvertex E=(150,10)\letvertex F=(135,35)

\letvertex a=(280,60)\letvertex b=(265,35)
\letvertex c=(250,10)\letvertex d=(280,10)
\letvertex e=(310,10)\letvertex f=(295,35)

\drawundirectededge(A,B){}\drawundirectededge(B,F){}\drawundirectededge(F,A){}
\drawundirectededge(E,F){}\drawundirectededge(B,C){}

%\thinlines

\drawundirectededge(a,b){} \drawundirectededge(b,c){}
\drawundirectededge(c,d){} \drawundirectededge(d,e){}
\drawundirectededge(f,a){}\drawundirectededge(b,f){}\drawundirectededge(b,d){}
%\thinlines

\drawvertex(a){$\bullet$}\drawvertex(b){$\bullet$}
\drawvertex(c){$\bullet$}\drawvertex(d){$\bullet$}
\drawvertex(e){$\bullet$}\drawvertex(f){$\bullet$}
\drawvertex(A){$\bullet$}\drawvertex(B){$\bullet$}
\drawvertex(C){$\bullet$}\drawvertex(D){$\bullet$}
\drawvertex(E){$\bullet$}\drawvertex(F){$\bullet$}

\dashline[0]{5}(280,10)(295,35) \dashline[0]{5}(295,35)(310,10)

\dashline[0]{5}(90,10)(120,10)

\dashline[0]{5}(120,10)(105,35)

\dashline[0]{5}(120,10)(150,10)

\dashline[0]{5}(120,10)(135,35)

%\drawundirectededge(C,D) \drawundirectededge(D,E){}
%\drawundirectededge(B,D){}\drawundirectededge(D,F){}\drawundirectededge(d,f){}\drawundirectededge(e,f){}

\end{picture}
\end{center}
The first configuration contributes by a term
$\frac{3}{x-1}T_{2,n}T_{1,n}^2$ (we have to consider the possible
rotations); the second one contributes by
$\frac{6}{x-1}T_{2,n}^2T_{1,n}$ (we have to take into account all
the possible symmetries of $\Gamma_{n+1}$). This gives Equation
\eqref{t2n}.

Next, we study  the contributions to the polynomial
$T_{1,n+1}(x,y)$. The only case of subgraph of the first type is
represented in the following picture.

\begin{center}
\begin{picture}(400,60)
\letvertex A=(200,60)\letvertex B=(185,35)
\letvertex C=(170,10)\letvertex D=(200,10)
\letvertex E=(230,10)\letvertex F=(215,35)

 \drawundirectededge(B,C){}
\drawundirectededge(C,D){} \drawundirectededge(D,E){}
\drawundirectededge(E,F){}\drawundirectededge(B,F){}\drawundirectededge(B,D){}\drawundirectededge(D,F){}

\dashline[0]{5}(200,60)(185,35) \dashline[0]{5}(215,35)(200,60)

%\drawundirectededge(A,B){}\drawundirectededge(F,A){}

\drawvertex(A){$\bullet$}\drawvertex(B){$\bullet$}
\drawvertex(C){$\bullet$}\drawvertex(D){$\bullet$}
\drawvertex(E){$\bullet$}\drawvertex(F){$\bullet$}
\end{picture}
\end{center}
It gives a contribution of $(y-1)T_{2,n}^2T_{1,n}$. Then, consider
the following configurations.
\begin{center}
\begin{picture}(400,60)
\letvertex A=(120,60)\letvertex B=(105,35)
\letvertex C=(90,10)\letvertex D=(120,10)
\letvertex E=(150,10)\letvertex F=(135,35)

\letvertex a=(280,60)\letvertex b=(265,35)
\letvertex c=(250,10)\letvertex d=(280,10)
\letvertex e=(310,10)\letvertex f=(295,35)

\drawundirectededge(B,C){} \drawundirectededge(C,D){}
\drawundirectededge(D,E){}
\drawundirectededge(E,F){}\drawundirectededge(B,D){}\drawundirectededge(D,F){}

%\drawundirectededge(A,B){}\drawundirectededge(F,A){}\drawundirectededge(B,F){}
\dashline[0]{5}(120,60)(105,35) \dashline[0]{5}(135,35)(120,60)
\dashline[0]{5}(105,35)(135,35)

%\drawundirectededge(a,b){}\drawundirectededge(f,a){}\drawundirectededge(d,e){}\drawundirectededge(d,f){}

\dashline[0]{5}(280,60)(265,35) \dashline[0]{5}(280,60)(295,35)
\dashline[0]{5}(280,10)(310,10)\dashline[0]{5}(280,10)(295,35)

\drawundirectededge(b,c){} \drawundirectededge(c,d){}
\drawundirectededge(e,f){}\drawundirectededge(b,f){}\drawundirectededge(b,d){}

\drawvertex(a){$\bullet$}\drawvertex(b){$\bullet$}
\drawvertex(c){$\bullet$}\drawvertex(d){$\bullet$}
\drawvertex(e){$\bullet$}\drawvertex(f){$\bullet$}
\drawvertex(A){$\bullet$}\drawvertex(B){$\bullet$}
\drawvertex(C){$\bullet$}\drawvertex(D){$\bullet$}
\drawvertex(E){$\bullet$}\drawvertex(F){$\bullet$}
\end{picture}
\end{center}
The first one contributes by $\frac{1}{x-1}T_{2,n}^2T_{0,n}$; the
second one gives the contribution $\frac{2}{x-1}T_{2,n}T_{1,n}^2$
(we have to take into account one reflection with respect to the
vertical axis). Let us analyze now the three following
configurations.
\begin{center}
\begin{picture}(400,60)
\letvertex A=(60,60)\letvertex B=(45,35)
\letvertex C=(30,10)\letvertex D=(60,10)
\letvertex E=(90,10)\letvertex F=(75,35)
\letvertex a=(200,60)\letvertex b=(185,35)
\letvertex c=(170,10)\letvertex d=(200,10)
\letvertex e=(230,10)\letvertex f=(215,35)
\letvertex AA=(340,60)\letvertex BB=(325,35)
\letvertex CC=(310,10)\letvertex DD=(340,10)
\letvertex EE=(370,10)\letvertex FF=(355,35)
\drawundirectededge(B,C){} \drawundirectededge(C,D){}
\drawundirectededge(D,E){} \drawundirectededge(B,D){}
%\drawundirectededge(A,B){}\drawundirectededge(F,A){}\drawundirectededge(B,F){}\drawundirectededge(E,F){}\drawundirectededge(D,F){}
\dashline[0]{5}(60,60)(45,35) \dashline[0]{5}(75,35)(60,60)
\dashline[0]{5}(45,35)(75,35)\dashline[0]{5}(90,10)(75,35)
\dashline[0]{5}(75,35)(60,10)

%\drawundirectededge(a,b){}\drawundirectededge(b,f){}\drawundirectededge(d,f){}\drawundirectededge(e,f){}
\dashline[0]{5}(200,60)(185,35) \dashline[0]{5}(185,35)(215,35)
\dashline[0]{5}(200,10)(215,35)\dashline[0]{5}(230,10)(215,35)
\drawundirectededge(b,c){}
\drawundirectededge(c,d){}\drawundirectededge(f,a){}
\drawundirectededge(d,e){}\drawundirectededge(b,d){}

%\drawundirectededge(AA,BB){}\drawundirectededge(DD,FF){}\drawundirectededge(FF,AA){}\drawundirectededge(EE,FF){}
\dashline[0]{5}(340,60)(325,35) \dashline[0]{5}(355,35)(340,10)
\dashline[0]{5}(340,60)(355,35)\dashline[0]{5}(370,10)(355,35)

\drawundirectededge(BB,CC){}\drawundirectededge(DD,EE){}
\drawundirectededge(CC,DD){}\drawundirectededge(BB,FF){}
\drawundirectededge(BB,DD){}

\drawvertex(a){$\bullet$}\drawvertex(b){$\bullet$}
\drawvertex(c){$\bullet$}\drawvertex(d){$\bullet$}
\drawvertex(e){$\bullet$}\drawvertex(f){$\bullet$}
\drawvertex(A){$\bullet$}\drawvertex(B){$\bullet$}
\drawvertex(C){$\bullet$}\drawvertex(D){$\bullet$}
\drawvertex(E){$\bullet$}\drawvertex(F){$\bullet$}
\drawvertex(AA){$\bullet$}\drawvertex(BB){$\bullet$}
\drawvertex(CC){$\bullet$}\drawvertex(DD){$\bullet$}
\drawvertex(EE){$\bullet$}\drawvertex(FF){$\bullet$}
\end{picture}
\end{center}
The first one gives the contribution
$\frac{1}{x-1}T_{2,n}T_{1,n}T_{0,n}$; both the second and the
third one give $\frac{1}{x-1}T_{2,n}T_{1,n}^2$. All these terms
have to be multiplied by $2$ because of the possible reflections.
The three following configurations give the terms
$\frac{1}{x-1}T_{1,n}^3$, $\frac{1}{x-1}T_{1,n}^2T_{0,n}$ and
$\frac{1}{x-1}T_{2,n}T_{1,n}^2$, respectively.
\begin{center}
\begin{picture}(400,60)
\letvertex A=(60,60)\letvertex B=(45,35)
\letvertex C=(30,10)\letvertex D=(60,10)
\letvertex E=(90,10)\letvertex F=(75,35)

\letvertex a=(200,60)\letvertex b=(185,35)
\letvertex c=(170,10)\letvertex d=(200,10)
\letvertex e=(230,10)\letvertex f=(215,35)

\letvertex AA=(340,60)\letvertex BB=(325,35)
\letvertex CC=(310,10)\letvertex DD=(340,10)
\letvertex EE=(370,10)\letvertex FF=(355,35)

\drawundirectededge(B,C){}\drawundirectededge(E,F){}\drawundirectededge(B,F){}

%\drawundirectededge(A,B){}\drawundirectededge(F,A){}
%\drawundirectededge(C,D){}\drawundirectededge(B,D){}\drawundirectededge(D,F){}\drawundirectededge(D,E){}
%\drawundirectededge(a,b){} \drawundirectededge(b,c){}
%\drawundirectededge(e,f){}\drawundirectededge(f,a){}\drawundirectededge(b,f){}\drawundirectededge(b,d){}\drawundirectededge(d,f){}
\dashline[0]{5}(60,60)(45,35) \dashline[0]{5}(75,35)(60,60)
\dashline[0]{5}(30,10)(60,10)\dashline[0]{5}(45,35)(60,10)
\dashline[0]{5}(75,35)(60,10)\dashline[0]{5}(60,10)(90,10)
\dashline[0]{5}(200,60)(185,35)\dashline[0]{5}(170,10)(185,35)
\dashline[0]{5}(215,35)(230,10)
\dashline[0]{5}(200,60)(215,35)\dashline[0]{5}(185,35)(215,35)
\dashline[0]{5}(185,35)(200,10) \dashline[0]{5}(200,10)(215,35)

\drawundirectededge(c,d){} \drawundirectededge(d,e){}

%\drawundirectededge(BB,CC){}\drawundirectededge(BB,DD){}\drawundirectededge(DD,FF){}\drawundirectededge(EE,FF){}
\dashline[0]{5}(310,10)(325,35) \dashline[0]{5}(325,35)(340,10)
\dashline[0]{5}(340,10)(355,35)\dashline[0]{5}(370,10)(355,35)

\drawundirectededge(AA,BB){} \drawundirectededge(CC,DD){}
\drawundirectededge(DD,EE){}
\drawundirectededge(FF,AA){}\drawundirectededge(BB,FF){}

\drawvertex(a){$\bullet$}\drawvertex(b){$\bullet$}
\drawvertex(c){$\bullet$}\drawvertex(d){$\bullet$}
\drawvertex(e){$\bullet$}\drawvertex(f){$\bullet$}
\drawvertex(A){$\bullet$}\drawvertex(B){$\bullet$}
\drawvertex(C){$\bullet$}\drawvertex(D){$\bullet$}
\drawvertex(E){$\bullet$}\drawvertex(F){$\bullet$}
\drawvertex(AA){$\bullet$}\drawvertex(BB){$\bullet$}
\drawvertex(CC){$\bullet$}\drawvertex(DD){$\bullet$}
\drawvertex(EE){$\bullet$}\drawvertex(FF){$\bullet$}
\end{picture}
\end{center}
Finally, the two following configurations give
$\frac{2}{x-1}T_{1,n}^3$ and $\frac{1}{x-1}T_{1,n}^3$,
respectively (for the picture on the left, we have to take into
account a possible reflection).

\begin{center}
\begin{picture}(400,60)
\letvertex A=(120,60)\letvertex B=(105,35)
\letvertex C=(90,10)\letvertex D=(120,10)
\letvertex E=(150,10)\letvertex F=(135,35)

\letvertex a=(280,60)\letvertex b=(265,35)
\letvertex c=(250,10)\letvertex d=(280,10)
\letvertex e=(310,10)\letvertex f=(295,35)

% \drawundirectededge(B,C){}\drawundirectededge(E,F){}\drawundirectededge(F,A){}
%\drawundirectededge(B,F){}\drawundirectededge(B,D){}\drawundirectededge(D,F){}
\drawundirectededge(C,D){}
\drawundirectededge(D,E){}\drawundirectededge(A,B){}

\dashline[0]{5}(90,10)(105,35) \dashline[0]{5}(135,35)(150,10)
\dashline[0]{5}(135,35)(120,60)\dashline[0]{5}(135,35)(105,35)
\dashline[0]{5}(105,35)(120,10) \dashline[0]{5}(120,10)(135,35)

%\drawundirectededge(a,b){} \drawundirectededge(b,c){}
%\drawundirectededge(e,f){}\drawundirectededge(f,a){}\drawundirectededge(b,d){}\drawundirectededge(d,f){}
\dashline[0]{5}(280,60)(265,35) \dashline[0]{5}(265,35)(250,10)
\dashline[0]{5}(310,10)(295,35)\dashline[0]{5}(280,60)(295,35)
\dashline[0]{5}(265,35)(280,10) \dashline[0]{5}(280,10)(295,35)

\drawundirectededge(c,d){}
\drawundirectededge(d,e){}\drawundirectededge(b,f){}

\drawvertex(a){$\bullet$}\drawvertex(b){$\bullet$}
\drawvertex(c){$\bullet$}\drawvertex(d){$\bullet$}
\drawvertex(e){$\bullet$}\drawvertex(f){$\bullet$}
\drawvertex(A){$\bullet$}\drawvertex(B){$\bullet$}
\drawvertex(C){$\bullet$}\drawvertex(D){$\bullet$}
\drawvertex(E){$\bullet$}\drawvertex(F){$\bullet$}
\end{picture}
\end{center}
This completes the proof of Equation \eqref{t1n}. Next, let us
consider the contributions to the polynomial $T_{0,n+1}(x,y)$. The
following pictures represent the only cases producing subgraphs of
the first type.
\begin{center}
\begin{picture}(400,60)
\letvertex A=(120,60)\letvertex B=(105,35)
\letvertex C=(90,10)\letvertex D=(120,10)
\letvertex E=(150,10)\letvertex F=(135,35)

\letvertex a=(280,60)\letvertex b=(265,35)
\letvertex c=(250,10)\letvertex d=(280,10)
\letvertex e=(310,10)\letvertex f=(295,35)

%\drawundirectededge(A,B){}\drawundirectededge(F,A){}\drawundirectededge(D,E){}
%\drawundirectededge(E,F){}

\dashline[0]{5}(120,60)(105,35) \dashline[0]{5}(135,35)(120,60)
\dashline[0]{5}(120,10)(150,10)\dashline[0]{5}(150,10)(135,35)

\drawundirectededge(B,C){} \drawundirectededge(C,D){}
\drawundirectededge(B,F){}\drawundirectededge(B,D){}\drawundirectededge(D,F){}

%\drawundirectededge(a,b){}\drawundirectededge(f,a){} \drawundirectededge(d,e){}\drawundirectededge(e,f){}
%\drawundirectededge(b,c){}\drawundirectededge(c,d){}

\dashline[0]{5}(280,60)(265,35) \dashline[0]{5}(295,35)(280,60)
\dashline[0]{5}(280,10)(310,10)\dashline[0]{5}(310,10)(295,35)
\dashline[0]{5}(265,35)(250,10) \dashline[0]{5}(250,10)(280,10)

\drawundirectededge(b,f){}\drawundirectededge(b,d){}\drawundirectededge(d,f){}

\drawvertex(a){$\bullet$}\drawvertex(b){$\bullet$}
\drawvertex(c){$\bullet$}\drawvertex(d){$\bullet$}
\drawvertex(e){$\bullet$}\drawvertex(f){$\bullet$}
\drawvertex(A){$\bullet$}\drawvertex(B){$\bullet$}
\drawvertex(C){$\bullet$}\drawvertex(D){$\bullet$}
\drawvertex(E){$\bullet$}\drawvertex(F){$\bullet$}
\end{picture}
\end{center}
These configurations give the term $(y-1)\left(3T_{2,n}T_{1,n}^2 +
T_{1,n}^3\right)$. Next, look at the following pictures.

\begin{center}
\begin{picture}(400,60)
\letvertex A=(60,60)\letvertex B=(45,35)
\letvertex C=(30,10)\letvertex D=(60,10)
\letvertex E=(90,10)\letvertex F=(75,35)

\letvertex a=(200,60)\letvertex b=(185,35)
\letvertex c=(170,10)\letvertex d=(200,10)
\letvertex e=(230,10)\letvertex f=(215,35)

\letvertex AA=(340,60)\letvertex BB=(325,35)
\letvertex CC=(310,10)\letvertex DD=(340,10)
\letvertex EE=(370,10)\letvertex FF=(355,35)

%\drawundirectededge(A,B){} \drawundirectededge(D,E){}
%\drawundirectededge(E,F){}\drawundirectededge(F,A){}\drawundirectededge(B,F){}
\dashline[0]{5}(60,60)(45,35) \dashline[0]{5}(60,10)(90,10)
\dashline[0]{5}(90,10)(75,35)\dashline[0]{5}(60,60)(75,35)
\dashline[0]{5}(45,35)(75,35)

\drawundirectededge(B,C){}
\drawundirectededge(C,D){}\drawundirectededge(B,D){}\drawundirectededge(D,F){}
%\drawundirectededge(a,b){} \drawundirectededge(d,e){}
%\drawundirectededge(f,a){}\drawundirectededge(b,f){}\drawundirectededge(d,f){}
\dashline[0]{5}(200,60)(185,35) \dashline[0]{5}(200,10)(230,10)
\dashline[0]{5}(215,35)(200,60)\dashline[0]{5}(215,35)(185,35)
\dashline[0]{5}(215,35)(200,10)

\drawundirectededge(b,c){}
\drawundirectededge(c,d){}\drawundirectededge(b,d){}\drawundirectededge(e,f){}

%\drawundirectededge(AA,BB){} \drawundirectededge(DD,EE){}
%\drawundirectededge(EE,FF){}\drawundirectededge(FF,AA){}\drawundirectededge(BB,FF){}\drawundirectededge(DD,FF){}
\dashline[0]{5}(340,60)(325,35) \dashline[0]{5}(340,10)(370,10)
\dashline[0]{5}(370,10)(355,35)\dashline[0]{5}(340,60)(355,35)
\dashline[0]{5}(325,35)(355,35)\dashline[0]{5}(340,10)(355,35)

\drawundirectededge(BB,CC){}
\drawundirectededge(CC,DD){}\drawundirectededge(BB,DD){}

\drawvertex(a){$\bullet$}\drawvertex(b){$\bullet$}
\drawvertex(c){$\bullet$}\drawvertex(d){$\bullet$}
\drawvertex(e){$\bullet$}\drawvertex(f){$\bullet$}
\drawvertex(A){$\bullet$}\drawvertex(B){$\bullet$}
\drawvertex(C){$\bullet$}\drawvertex(D){$\bullet$}
\drawvertex(E){$\bullet$}\drawvertex(F){$\bullet$}
\drawvertex(AA){$\bullet$}\drawvertex(BB){$\bullet$}
\drawvertex(CC){$\bullet$}\drawvertex(DD){$\bullet$}
\drawvertex(EE){$\bullet$}\drawvertex(FF){$\bullet$}
\end{picture}
\end{center}

These configurations produce the terms
$\frac{6}{x-1}T_{2,n}T_{1,n}T_{0,n}$ (reflections and rotations),
then still $\frac{6}{x-1}T_{2,n}T_{1,n}T_{0,n}$ (reflections and
rotations) and $\frac{3}{x-1}T_{2,n}T_{0,n}^2$ (only rotations),
respectively. The three following configurations give the
contribution $\frac{1}{x-1}\left(6T_{1,n}^3 +
6T_{1,n}^3+6T_{1,n}^2T_{0,n}\right)$.
\begin{center}
\begin{picture}(400,60)
\letvertex A=(60,60)\letvertex B=(45,35)
\letvertex C=(30,10)\letvertex D=(60,10)
\letvertex E=(90,10)\letvertex F=(75,35)

\letvertex a=(200,60)\letvertex b=(185,35)
\letvertex c=(170,10)\letvertex d=(200,10)
\letvertex e=(230,10)\letvertex f=(215,35)

\letvertex AA=(340,60)\letvertex BB=(325,35)
\letvertex CC=(310,10)\letvertex DD=(340,10)
\letvertex EE=(370,10)\letvertex FF=(355,35)
%\drawundirectededge(A,B){} \drawundirectededge(B,C){}\drawundirectededge(D,E){}
%\drawundirectededge(E,F){}\drawundirectededge(F,A){}\drawundirectededge(B,F){}\drawundirectededge(B,D){}
\dashline[0]{5}(60,60)(45,35) \dashline[0]{5}(45,35)(30,10)
\dashline[0]{5}(60,10)(90,10)\dashline[0]{5}(90,10)(75,35)
\dashline[0]{5}(75,35)(60,60)
\dashline[0]{5}(45,35)(75,35)\dashline[0]{5}(60,10)(45,35)
\drawundirectededge(C,D){}\drawundirectededge(D,F){}
\drawundirectededge(c,d){}\drawundirectededge(d,f){}\drawundirectededge(b,f){}
%\drawundirectededge(a,b){} \drawundirectededge(b,c){}\drawundirectededge(d,e){}
%\drawundirectededge(e,f){}\drawundirectededge(f,a){}\drawundirectededge(b,d){}
\dashline[0]{5}(200,60)(185,35) \dashline[0]{5}(185,35)(170,10)
\dashline[0]{5}(200,10)(230,10)\dashline[0]{5}(230,10)(215,35)
\dashline[0]{5}(215,35)(200,60) \dashline[0]{5}(185,35)(200,10)

%\drawundirectededge(BB,CC){}\drawundirectededge(DD,EE){}\drawundirectededge(EE,FF){}
%\drawundirectededge(FF,AA){}\drawundirectededge(BB,FF){}\drawundirectededge(BB,DD){}
\dashline[0]{5}(310,10)(325,35)\dashline[0]{5}(340,10)(370,10)
\dashline[0]{5}(370,10)(355,35)\dashline[0]{5}(340,60)(355,35)
\dashline[0]{5}(325,35)(355,35)\dashline[0]{5}(325,35)(340,10)

\drawundirectededge(CC,DD){}\drawundirectededge(DD,FF){}\drawundirectededge(AA,BB){}

\drawvertex(a){$\bullet$}\drawvertex(b){$\bullet$}
\drawvertex(c){$\bullet$}\drawvertex(d){$\bullet$}
\drawvertex(e){$\bullet$}\drawvertex(f){$\bullet$}
\drawvertex(A){$\bullet$}\drawvertex(B){$\bullet$}
\drawvertex(C){$\bullet$}\drawvertex(D){$\bullet$}
\drawvertex(E){$\bullet$}\drawvertex(F){$\bullet$}
\drawvertex(AA){$\bullet$}\drawvertex(BB){$\bullet$}
\drawvertex(CC){$\bullet$}\drawvertex(DD){$\bullet$}
\drawvertex(EE){$\bullet$}\drawvertex(FF){$\bullet$}
\end{picture}
\end{center}
Next, look at the three following pictures. The first one
contributes by $\frac{6}{x-1}T_{1,n}^2T_{0,n}$, the second one by
$\frac{2}{x-1}T_{1,n}^3$ (we only have to take into account one
reflection this time) and the third one by
$\frac{6}{x-1}T_{1,n}^2T_{0,n}$.

\begin{center}
\begin{picture}(400,60)
\letvertex A=(60,60)\letvertex B=(45,35)
\letvertex C=(30,10)\letvertex D=(60,10)
\letvertex E=(90,10)\letvertex F=(75,35)

\letvertex a=(200,60)\letvertex b=(185,35)
\letvertex c=(170,10)\letvertex d=(200,10)
\letvertex e=(230,10)\letvertex f=(215,35)

\letvertex AA=(340,60)\letvertex BB=(325,35)
\letvertex CC=(310,10)\letvertex DD=(340,10)
\letvertex EE=(370,10)\letvertex FF=(355,35)

%\drawundirectededge(A,B){} \drawundirectededge(B,C){}\drawundirectededge(D,E){}
%\drawundirectededge(F,A){}\drawundirectededge(B,F){}\drawundirectededge(B,D){}\drawundirectededge(D,F){}
\dashline[0]{5}(60,60)(45,35) \dashline[0]{5}(45,35)(30,10)
\dashline[0]{5}(60,10)(90,10)\dashline[0]{5}(60,60)(75,35)
\dashline[0]{5}(45,35)(75,35)\dashline[0]{5}(60,10)(45,35)\dashline[0]{5}(60,10)(75,35)

\drawundirectededge(C,D){} \drawundirectededge(E,F){}
\drawundirectededge(a,b){}\drawundirectededge(c,d){}\drawundirectededge(e,f){}

%\drawundirectededge(b,c){}\drawundirectededge(d,e){}
%\drawundirectededge(f,a){}\drawundirectededge(b,f){}\drawundirectededge(b,d){}\drawundirectededge(d,f){}
\dashline[0]{5}(170,10)(185,35)\dashline[0]{5}(200,10)(230,10)
\dashline[0]{5}(200,60)(215,35)\dashline[0]{5}(215,35)(185,35)
\dashline[0]{5}(185,35)(200,10)\dashline[0]{5}(200,10)(215,35)
\drawundirectededge(BB,DD){}\drawundirectededge(EE,FF){}

%\drawundirectededge(AA,BB){} \drawundirectededge(BB,CC){}
%\drawundirectededge(CC,DD){} \drawundirectededge(DD,EE){}
%\drawundirectededge(FF,AA){}\drawundirectededge(BB,FF){}\drawundirectededge(DD,FF){}
\dashline[0]{5}(340,60)(325,35)\dashline[0]{5}(325,35)(310,10)
\dashline[0]{5}(310,10)(340,10)\dashline[0]{5}(340,10)(370,10)
\dashline[0]{5}(355,35)(340,60)\dashline[0]{5}(325,35)(355,35)\dashline[0]{5}(340,10)(355,35)

\drawvertex(a){$\bullet$}\drawvertex(b){$\bullet$}
\drawvertex(c){$\bullet$}\drawvertex(d){$\bullet$}
\drawvertex(e){$\bullet$}\drawvertex(f){$\bullet$}
\drawvertex(A){$\bullet$}\drawvertex(B){$\bullet$}
\drawvertex(C){$\bullet$}\drawvertex(D){$\bullet$}
\drawvertex(E){$\bullet$}\drawvertex(F){$\bullet$}
\drawvertex(AA){$\bullet$}\drawvertex(BB){$\bullet$}
\drawvertex(CC){$\bullet$}\drawvertex(DD){$\bullet$}
\drawvertex(EE){$\bullet$}\drawvertex(FF){$\bullet$}
\end{picture}
\end{center}
Each one of the following configurations produce a term
$\frac{3}{x-1}T_{1,n}^2T_{0,n}$, since one has to consider
rotations.
\begin{center}
\begin{picture}(400,60)
\letvertex A=(120,60)\letvertex B=(105,35)
\letvertex C=(90,10)\letvertex D=(120,10)
\letvertex E=(150,10)\letvertex F=(135,35)

\letvertex a=(280,60)\letvertex b=(265,35)
\letvertex c=(250,10)\letvertex d=(280,10)
\letvertex e=(310,10)\letvertex f=(295,35)

\drawundirectededge(B,D){} \drawundirectededge(D,F){}
%\drawundirectededge(A,B){} \drawundirectededge(B,C){}
%\drawundirectededge(C,D){} \drawundirectededge(D,E){}
%\drawundirectededge(E,F){}\drawundirectededge(F,A){}\drawundirectededge(B,F){}
\dashline[0]{5}(120,60)(105,35) \dashline[0]{5}(105,35)(90,10)
\dashline[0]{5}(90,10)(120,10)\dashline[0]{5}(120,10)(150,10)
\dashline[0]{5}(135,35)(150,10)
\dashline[0]{5}(120,60)(135,35)\dashline[0]{5}(135,35)(105,35)

\drawundirectededge(d,e){}\drawundirectededge(a,b){}

%\drawundirectededge(b,c){}\drawundirectededge(c,d){}
%\drawundirectededge(e,f){}\drawundirectededge(f,a){}\drawundirectededge(b,f){}\drawundirectededge(b,d){}\drawundirectededge(d,f){}
\dashline[0]{5}(250,10)(265,35) \dashline[0]{5}(250,10)(280,10)
\dashline[0]{5}(310,10)(295,35)\dashline[0]{5}(280,60)(295,35)
\dashline[0]{5}(265,35)(295,35)
\dashline[0]{5}(265,35)(280,10)\dashline[0]{5}(280,10)(295,35)

\drawvertex(a){$\bullet$}\drawvertex(b){$\bullet$}
\drawvertex(c){$\bullet$}\drawvertex(d){$\bullet$}
\drawvertex(e){$\bullet$}\drawvertex(f){$\bullet$}
\drawvertex(A){$\bullet$}\drawvertex(B){$\bullet$}
\drawvertex(C){$\bullet$}\drawvertex(D){$\bullet$}
\drawvertex(E){$\bullet$}\drawvertex(F){$\bullet$}
\end{picture}
\end{center}

Finally, the three following pictures give
$\frac{3}{x-1}T_{1,n}T_{0,n}^2$, $\frac{6}{x-1}T_{1,n}T_{0,n}^2$
and $\frac{1}{x-1}T_{0,n}^3$, respectively.
\begin{center}
\begin{picture}(400,60)
\letvertex A=(60,60)\letvertex B=(45,35)\letvertex C=(30,10)\letvertex D=(60,10)\letvertex E=(90,10)\letvertex F=(75,35)

\letvertex a=(200,60)\letvertex b=(185,35)\letvertex c=(170,10)\letvertex d=(200,10)\letvertex e=(230,10)\letvertex f=(215,35)

\letvertex AA=(340,60)\letvertex BB=(325,35)\letvertex CC=(310,10)\letvertex DD=(340,10)\letvertex EE=(370,10)\letvertex FF=(355,35)

%\drawundirectededge(A,B){} \drawundirectededge(B,C){}\drawundirectededge(C,D){} \drawundirectededge(D,E){}
%\drawundirectededge(E,F){}\drawundirectededge(F,A){}\drawundirectededge(B,F){}\drawundirectededge(B,D){}
\drawundirectededge(D,F){}
\dashline[0]{5}(60,60)(45,35)\dashline[0]{5}(45,35)(30,10)\dashline[0]{5}(30,10)(60,10)\dashline[0]{5}(60,10)(90,10)
\dashline[0]{5}(75,35)(90,10)\dashline[0]{5}(75,35)(60,60)\dashline[0]{5}(45,35)(75,35)\dashline[0]{5}(45,35)(60,10)

%\drawundirectededge(a,b){}\drawundirectededge(b,c){}\drawundirectededge(c,d){}\drawundirectededge(e,f){}
%\drawundirectededge(f,a){}\drawundirectededge(b,f){}\drawundirectededge(b,d){}\drawundirectededge(d,f){}
\drawundirectededge(d,e){}
\dashline[0]{5}(200,60)(185,35)\dashline[0]{5}(185,35)(170,10)\dashline[0]{5}(170,10)(200,10)\dashline[0]{5}(230,10)(215,35)
\dashline[0]{5}(215,35)(200,60)\dashline[0]{5}(185,35)(215,35)\dashline[0]{5}(200,10)(185,35)\dashline[0]{5}(215,35)(200,10)

%\drawundirectededge(AA,BB){} \drawundirectededge(BB,CC){}\drawundirectededge(CC,DD){} \drawundirectededge(DD,EE){}
%\drawundirectededge(EE,FF){}\drawundirectededge(FF,AA){}\drawundirectededge(BB,FF){}\drawundirectededge(BB,DD){}\drawundirectededge(DD,FF){}
\dashline[0]{5}(340,60)(325,35)\dashline[0]{5}(325,35)(310,10)
\dashline[0]{5}(310,10)(340,10)\dashline[0]{5}(340,10)(370,10)
\dashline[0]{5}(355,35)(370,10)\dashline[0]{5}(355,35)(340,60)\dashline[0]{5}(325,35)(355,35)
\dashline[0]{5}(325,35)(340,10) \dashline[0]{5}(340,10)(355,35)

\drawvertex(a){$\bullet$}\drawvertex(b){$\bullet$}
\drawvertex(c){$\bullet$}\drawvertex(d){$\bullet$}
\drawvertex(e){$\bullet$}\drawvertex(f){$\bullet$}
\drawvertex(A){$\bullet$}\drawvertex(B){$\bullet$}
\drawvertex(C){$\bullet$}\drawvertex(D){$\bullet$}
\drawvertex(E){$\bullet$}\drawvertex(F){$\bullet$}
\drawvertex(AA){$\bullet$}\drawvertex(BB){$\bullet$}
\drawvertex(CC){$\bullet$}\drawvertex(DD){$\bullet$}
\drawvertex(EE){$\bullet$}\drawvertex(FF){$\bullet$}
\end{picture}
\end{center}
This proves Equation \eqref{t0n}.
\end{proof}

\noindent The following lemma can be easily proven by induction,
using Equations \eqref{t1n} and \eqref{t0n}.

\begin{lem}\label{lemmafattori}
For each $n\geq 1$, $x-1$ divides $T_{1,n}(x,y)$ and $(x-1)^2$
divides $T_{0,n}(x,y)$ in $\mathbb{Z}[x,y]$.
\end{lem}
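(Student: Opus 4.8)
The plan is to prove both divisibilities \emph{simultaneously} by induction on $n$, feeding the inductive hypothesis into the recursions \eqref{t1n} and \eqref{t0n} and keeping track of the power of $(x-1)$ dividing each monomial. The base case $n=1$ is immediate from the initial conditions, since $T_{1,1}=x-1$ is divisible by $x-1$ and $T_{0,1}=(x-1)^2$ is divisible by $(x-1)^2$. The two claims must travel together through the induction: the recursion for $T_{1,n+1}$ already involves $T_{0,n}$, and to obtain even a single factor of $x-1$ in $T_{1,n+1}$ one needs the \emph{full} $(x-1)^2$ divisibility of $T_{0,n}$, not merely a single factor. This coupling is the reason the lemma packages the two statements into one.

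For the inductive step, write $T_{1,n}=(x-1)P$ and $T_{0,n}=(x-1)^2 Q$ with $P,Q\in\mathbb{Z}[x,y]$; the polynomial $T_{2,n}$ plays no special role and may be an arbitrary element of $\mathbb{Z}[x,y]$. First I would look at \eqref{t1n}. The leading summand $(y-1)T_{2,n}^2T_{1,n}$ is visibly a multiple of $x-1$. In the bracket multiplied by $\frac{1}{x-1}$, the lowest-order monomials $T_{2,n}^2T_{0,n}$ and $7T_{2,n}T_{1,n}^2$ each carry the factor $(x-1)^2$, while $2T_{2,n}T_{1,n}T_{0,n}$, $4T_{1,n}^3$ and $T_{1,n}^2T_{0,n}$ carry higher powers; hence the whole bracket is divisible by $(x-1)^2$, and dividing it by $x-1$ leaves a multiple of $x-1$. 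Therefore $x-1$ divides $T_{1,n+1}$.

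Next I would treat \eqref{t0n}. Here the term $(y-1)\bigl(3T_{2,n}T_{1,n}^2+T_{1,n}^3\bigr)$ is already divisible by $(x-1)^2$, since $T_{1,n}^2$ alone supplies $(x-1)^2$. In the bracket multiplied by $\frac{1}{x-1}$ one checks that every monomial is divisible by $(x-1)^3$: the minimal contributions $12T_{2,n}T_{1,n}T_{0,n}$ and $14T_{1,n}^3$ give the factor $(x-1)^3$, and the remaining terms $3T_{2,n}T_{0,n}^2$, $24T_{1,n}^2T_{0,n}$, $9T_{1,n}T_{0,n}^2$ and $T_{0,n}^3$ give more. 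Dividing a $(x-1)^3$-divisible expression by $x-1$ produces a multiple of $(x-1)^2$, so $(x-1)^2$ divides $T_{0,n+1}$, completing the induction.

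There is essentially no obstacle here: the whole argument is a valuation count at $x=1$. The only point requiring a little care is that the formal factors $\frac{1}{x-1}$ genuinely cancel and leave polynomials in $\mathbb{Z}[x,y]$, which is guaranteed because in each recursion the minimal $(x-1)$-exponent appearing in the relevant bracket (namely $2$ in \eqref{t1n} and $3$ in \eqref{t0n}) strictly exceeds the single power being removed. This is precisely where the strengthened hypothesis $(x-1)^2\mid T_{0,n}$ is indispensable, and it confirms that proving the two divisibilities in isolation would fail.
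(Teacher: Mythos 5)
Your proof is correct and follows exactly the route the paper indicates: a simultaneous induction on $n$ using the recursions \eqref{t1n} and \eqref{t0n}, tracking the $(x-1)$-adic valuation of each monomial (the paper merely asserts the lemma ``can be easily proven by induction'' from those equations, and your write-up supplies precisely the missing details). Your observation that the two divisibility claims must be carried together --- since the term $\frac{1}{x-1}T_{2,n}^2T_{0,n}$ in \eqref{t1n} needs the full $(x-1)^2\mid T_{0,n}$ to yield even one factor of $x-1$ --- is accurate and is the one genuinely delicate point of the argument.
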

As a consequence, we can write
\begin{eqnarray}\label{semplificate}
T_{1,n}(x,y) = (x-1)N_n(x,y) \qquad \mbox{ and } \qquad T_{0,n}(x,y) =
(x-1)^2M_n(x,y),
\end{eqnarray}
with $N_n(x,y)$ and $ M_n(x,y) \in \mathbb{Z}[x,y]$.

\indent Using Equation \eqref{semplificate} for $T_{1,n}(x,y)$ and
$T_{0,n}(x,y)$, Equations \eqref{t2n}, \eqref{t1n}, \eqref{t0n}
can be rewritten as
\begin{eqnarray}\label{pigro2}
T_{2,n+1}(x,y) &=& (y-1)T_{2,n}^3
+3(x-1)T_{2,n}N_{n}^2+6T_{2,n}^2N_{n}
\end{eqnarray}
\begin{eqnarray}\label{pigro1}
N_{n+1}(x,y)&=& (y-1)T_{2,n}^2N_{n}+T_{2,n}^2M_{n} +7T_{2,n}N_{n}^2\\
&+& (x-1)\left(2T_{2,n}N_{n}M_{n}+4N_{n}^3\right)
+(x-1)^2N_{n}^2M_{n}\nonumber
\end{eqnarray}
\begin{eqnarray}\label{pigro0}
M_{n+1}(x,y)&=&(y-1)\left((x-1)N_{n}^3+3T_{2,n}N_{n}^2\right)+
12T_{2,n}N_{n}M_{n}+14N_{n}^3\\
&+&
(x-1)\left(3T_{2,n}M_{n}^2+24N_{n}^2M_{n}\right)+9(x-1)^2N_{n}M_{n}^2+(x-1)^3M_{n}^3,\nonumber
\end{eqnarray}
with initial conditions
$$
T_{2,1}(x,y)=y+2 \qquad N_1(x,y)=M_1(x,y)=1.
$$

These reduced equations turn out to be very useful for many
computations that can be done by evaluating the Tutte polynomial
in special points of the line $x=1$. Let us start by writing the
reliability polynomial $R(\Gamma_n,p)$.
\begin{prop}\label{donatellasierp}
For each $n\geq 1$, the reliability polynomial $R(\Gamma_n,p)$ is
given by
$$
R(\Gamma_n,p) =
p^{\frac{3^n+1}{2}}(1-p)^{\frac{3^n-1}{2}}T_n\left(1,\frac{1}{1-p}\right),
$$
with $T_n\left(1,\frac{1}{1-p}\right)=
T_{2,n}\left(1,\frac{1}{1-p}\right)$ and
\begin{eqnarray}\label{olga2}
T_{2,n+1}\left(1,\frac{1}{1-p}\right) =\frac{p}{1-p}T_{2,n}^3
+6T_{2,n}^2N_{n}
\end{eqnarray}
\begin{eqnarray}\label{olga1}
N_{n+1}\left(1,\frac{1}{1-p}\right)=
\frac{p}{1-p}T_{2,n}^2N_{n}+T_{2,n}^2M_{n}+7T_{2,n}N_{n}^2
\end{eqnarray}
\begin{eqnarray}\label{olga0}
M_{n+1}\left(1,\frac{1}{1-p}\right)=\frac{3p}{1-p}T_{2,n}N_{n}^2+
12T_{2,n}N_{n}M_{n}+14N_{n}^3,
\end{eqnarray}
with initial conditions
$$
T_{2,1}\left(1,\frac{1}{1-p}\right) =\frac{3-2p}{1-p} \qquad
N_1\left(1,\frac{1}{1-p}\right) = M_1\left(1,\frac{1}{1-p}\right)
=1.
$$
\end{prop}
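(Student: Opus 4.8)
The plan is to specialize the machinery already in place---the reliability formula of Theorem~\ref{twopolynomials}, the splitting $T_n = T_{2,n} + 3T_{1,n} + T_{0,n}$, and the reduced recursions \eqref{pigro2}--\eqref{pigro0}---to the line $x=1$; no new combinatorial analysis is needed. First I would read off the prefactor: substituting $|V(\Gamma_n)| = \frac{3^n+3}{2}$ and $|E(\Gamma_n)| = 3^n$ into part~(1) of Theorem~\ref{twopolynomials} gives the exponents $|V(\Gamma_n)|-1 = \frac{3^n+1}{2}$ and $|E(\Gamma_n)|-|V(\Gamma_n)|+1 = \frac{3^n-1}{2}$, which yields the stated factor $p^{\frac{3^n+1}{2}}(1-p)^{\frac{3^n-1}{2}}$ multiplying $T_n\!\left(1,\frac{1}{1-p}\right)$.

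Next I would establish that $T_n(1,y) = T_{2,n}(1,y)$. By the factorizations \eqref{semplificate} furnished by Lemma~\ref{lemmafattori}, one has $T_{1,n}(x,y) = (x-1)N_n(x,y)$ and $T_{0,n}(x,y) = (x-1)^2 M_n(x,y)$, so both vanish identically when $x=1$. Hence only the first summand of $T_n = T_{2,n} + 3T_{1,n} + T_{0,n}$ survives on this line, giving $T_n\!\left(1,\frac{1}{1-p}\right) = T_{2,n}\!\left(1,\frac{1}{1-p}\right)$, as asserted.

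It then remains to specialize the recursions. Setting $x=1$ in \eqref{pigro2}, \eqref{pigro1} and \eqref{pigro0} annihilates every term carrying a factor $(x-1)$, $(x-1)^2$ or $(x-1)^3$, while the elementary identity $y-1 = \frac{1}{1-p}-1 = \frac{p}{1-p}$ converts the surviving $(y-1)$-coefficients into $\frac{p}{1-p}$ (and into $\frac{3p}{1-p}$ for the term $3T_{2,n}N_n^2$ in \eqref{pigro0}); this reproduces exactly \eqref{olga2}, \eqref{olga1} and \eqref{olga0}. The initial conditions then follow by direct substitution into $T_{2,1}=y+2$ and $N_1 = M_1 = 1$, yielding $T_{2,1}\!\left(1,\frac{1}{1-p}\right) = \frac{1}{1-p}+2 = \frac{3-2p}{1-p}$ and $N_1\!\left(1,\frac{1}{1-p}\right) = M_1\!\left(1,\frac{1}{1-p}\right) = 1$. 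Since all the genuine work is already carried by Theorem~\ref{ricorsivesierpinski} and Lemma~\ref{lemmafattori}, there is no real obstacle here; the only point demanding a little care is the bookkeeping at $x=1$, namely retaining the $(y-1)3T_{2,n}N_n^2$ term of \eqref{pigro0} while discarding its companion $(y-1)(x-1)N_n^3$, with everything else being immediate substitution.
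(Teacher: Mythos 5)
Your proposal is correct and follows exactly the paper's own argument: invoke part (1) of Theorem \ref{twopolynomials} for the prefactor, use Lemma \ref{lemmafattori} to see that $T_{1,n}(1,y)=T_{0,n}(1,y)=0$ so that $T_n(1,y)=T_{2,n}(1,y)$, and then set $x=1$, $y=\frac{1}{1-p}$ in the reduced recursions \eqref{pigro2}--\eqref{pigro0}. The paper states these steps in two sentences; you have simply carried out the same substitutions in detail, and all of them check out.
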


\begin{proof}
One has $T_n\left(1,\frac{1}{1-p}\right)=
T_{2,n}\left(1,\frac{1}{1-p}\right)$, since
$T_{1,n}(1,y)=T_{0,n}(1,y)=0$, for every $y\in \mathbb{R}$ (see
Lemma \ref{lemmafattori}). Then, it suffices to apply (1) of
Theorem \ref{twopolynomials} and use Equations \eqref{pigro2},
\eqref{pigro1} and \eqref{pigro0}.
\end{proof}

\begin{os}\label{geometricremark}\rm
The analytic property $T_{1,n}(1,y)=T_{0,n}(1,y)=0$ has the
following geometric interpretation. The only nontrivial terms in
the sum \eqref{defsubgraphs} of Definition \ref{defspanning}, for
$x=1$, correspond to subgraphs $A$ such that $r(\Gamma_n)-r(A)=0$.
Since $|V(A)|=|V(\Gamma_n)|$, this means $k(A)=k(\Gamma_n)=1$ and
so $A$ must be a connected spanning subgraph of $\Gamma_n$; this
implies $A\in D_{2,n}$.
\end{os}

\begin{prop}\label{propcomplexsierp}
The complexity $\tau(\Gamma_n)$ is given by $T_n(1,1) =
T_{2,n}(1,1)$, where
\begin{eqnarray*}
T_{2,n+1}(1,1) = 6T_{2,n}^2N_{n}
\end{eqnarray*}
\begin{eqnarray*}
N_{n+1}(1,1)=T_{2,n}^2M_{n}+7T_{2,n}N_{n}^2
\end{eqnarray*}
\begin{eqnarray*}
M_{n+1}(1,1)=12T_{2,n}N_{n}M_{n}+14N_{n}^3,
\end{eqnarray*}
with initial conditions
$$
T_{2,1}(1,1)=3 \qquad N_1(1,1)=M_1(1,1)=1.
$$
\end{prop}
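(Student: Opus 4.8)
The plan is to read off the entire statement from the reduced recursions \eqref{pigro2}, \eqref{pigro1} and \eqref{pigro0} by specializing them to the single point $x=y=1$, so that essentially no new combinatorics is needed beyond Theorem \ref{ricorsivesierpinski} and Lemma \ref{lemmafattori}. First I would invoke part (1) of Theorem \ref{evaluations}, which identifies the complexity with the evaluation $\tau(\Gamma_n)=T_n(1,1)$. It then remains only to evaluate $T_n(1,1)$ through the recursive description already available.

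The second step is to reduce $T_n(1,1)$ to $T_{2,n}(1,1)$. Starting from the splitting $T_n=T_{2,n}+3T_{1,n}+T_{0,n}$, I would use Lemma \ref{lemmafattori} and the factorizations \eqref{semplificate}, namely $T_{1,n}(x,y)=(x-1)N_n(x,y)$ and $T_{0,n}(x,y)=(x-1)^2M_n(x,y)$. The explicit factors of $(x-1)$ force $T_{1,n}(1,y)=T_{0,n}(1,y)=0$ for every $y$, and in particular $T_{1,n}(1,1)=T_{0,n}(1,1)=0$. Hence $T_n(1,1)=T_{2,n}(1,1)$, which is the asserted equality. Geometrically this is exactly Remark \ref{geometricremark}: at $x=1$ only connected spanning subgraphs contribute, and these all lie in $D_{2,n}$.

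Finally I would substitute $x=1$, $y=1$ directly into the polynomial identities \eqref{pigro2}, \eqref{pigro1} and \eqref{pigro0}, which hold in $\mathbb{Z}[x,y]$. At this point every monomial carrying a factor $(y-1)$ or a positive power of $(x-1)$ vanishes, leaving precisely the three claimed recursions for $T_{2,n+1}(1,1)$, $N_{n+1}(1,1)$ and $M_{n+1}(1,1)$, with all right-hand-side terms evaluated at $(1,1)$. The initial conditions follow from $T_{2,1}(x,y)=y+2$ and $N_1(x,y)=M_1(x,y)=1$ by setting $x=y=1$, giving $T_{2,1}(1,1)=3$ and $N_1(1,1)=M_1(1,1)=1$.

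Since all of the genuine work has already gone into proving Theorem \ref{ricorsivesierpinski} and Lemma \ref{lemmafattori}, I do not expect any real obstacle here. The only point deserving a moment's care is the vanishing $T_{1,n}(1,1)=T_{0,n}(1,1)=0$, which is what legitimizes dropping the $3T_{1,n}$ and $T_{0,n}$ contributions and collapsing the three-term decomposition into the stated closed recursive system in $T_{2,n}$, $N_n$ and $M_n$ at the point $(1,1)$.
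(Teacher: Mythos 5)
Your proposal is correct and follows essentially the same route as the paper: the authors also reduce to $T_{2,n}$ via Lemma \ref{lemmafattori} and then specialize the reduced recursions, the only cosmetic difference being that they pass through the reliability-polynomial equations \eqref{olga2}--\eqref{olga0} and set $p=0$ rather than substituting $x=y=1$ directly into \eqref{pigro2}--\eqref{pigro0}. These are the same computation, so there is nothing to add.
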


\begin{proof}
One can compute $T_n(1,1)$ by evaluating
$T_n\left(1,\frac{1}{1-p}\right)=T_{2,n}\left(1,\frac{1}{1-p}\right)$
in $p=0$, using Equations \eqref{olga2}, \eqref{olga1} and
\eqref{olga0}.
\end{proof}

\begin{os}\rm
These equations coincide with the relations obtained in
\cite{taiwan, noispanning, wagner1}, without using Tutte
polynomials. More precisely, one can find in \cite[Theorem
3.1]{taiwan} and \cite[Corollary 2.3]{noispanning}:
\begin{enumerate}
\item $ T_n(1,1)= \tau(\Gamma_n) =2^{\frac{3^
{n-1}-1}{2}}3^{\frac{3^n+2n-1}{4}}5^{\frac{3^{n-1}-2n+1}{4}}$;
\item $N_n(1,1)= 2^{\frac{3^{n-1}-1}{2}}3^{\frac{3^{n}-2n-1}{4}}5^{\frac{3^{n-1}+2n-3}{4}}$;
\item $M_n(1,1) =
2^{\frac{3^{n-1}-1}{2}}3^{\frac{3^{n}-6n+3}{4}}5^{\frac{3^{n-1}+6n-7}{4}}$.
\end{enumerate}
Then, the asymptotic growth constant of the spanning trees of
$\Gamma_n$ is
$$
\lim_{n\to \infty}\frac{\log(\tau(\Gamma_n))}{|V(\Gamma_n)|}=
\frac{1}{3}\log 2+\frac{1}{2}\log 3+\frac{1}{6}\log 5.
$$
\end{os}
\noindent Evaluating $T_n\left(1,\frac{1}{1-p}\right)$ in
$p=\frac{1}{2}$ gives the number of connected spanning subgraphs
of $\Gamma_n$.

\begin{prop}\label{propconnsubgrsierp}
The number of connected spanning subgraphs of $\Gamma_n$ is given
by $T_n(1,2) = T_{2,n}(1,2)$, with
\begin{eqnarray*}
T_{2,n+1}(1,2) =T_{2,n}^3 +6T_{2,n}^2N_{n}
\end{eqnarray*}
\begin{eqnarray*}
N_{n+1}(1,2)=T_{2,n}^2N_{n}+T_{2,n}^2M_{n}+7T_{2,n}N_{n}^2
\end{eqnarray*}
\begin{eqnarray*}
M_{n+1}(1,2)=3T_{2,n}N_{n}^2+ 12T_{2,n}N_{n}M_{n}+14N_{n}^3,
\end{eqnarray*}
with initial conditions
$$
T_{2,1}(1,2)=4 \qquad N_1(1,2)=M_1(1,2)=1.
$$
\end{prop}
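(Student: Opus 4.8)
The plan is to specialize the machinery already developed for the reliability polynomial, in exactly the same spirit as the proof of Proposition \ref{propcomplexsierp}. By part (2) of Theorem \ref{evaluations}, the number of connected spanning subgraphs of $\Gamma_n$ equals $T_n(1,2)$, so it suffices to justify the equality $T_n(1,2)=T_{2,n}(1,2)$ and then read off the three recursions from the reduced equations.

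First I would collapse $T_n$ onto its $D_{2,n}$-part along the line $x=1$. By Lemma \ref{lemmafattori} we have $T_{1,n}(x,y)=(x-1)N_n(x,y)$ and $T_{0,n}(x,y)=(x-1)^2M_n(x,y)$, so that $T_{1,n}(1,y)=T_{0,n}(1,y)=0$ identically in $y$. Substituting into the splitting $T_n=T_{2,n}+3T_{1,n}+T_{0,n}$ gives $T_n(1,2)=T_{2,n}(1,2)$. This is consistent with the geometric interpretation of Remark \ref{geometricremark}: on the line $x=1$ only genuinely connected spanning subgraphs (those in $D_{2,n}$) contribute to the sum \eqref{defsubgraphs}.

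Next I would extract the recursions by evaluating at the correct point. The value $y=2$ corresponds to $\frac{1}{1-p}=2$, i.e.\ $p=\frac{1}{2}$, whence $\frac{p}{1-p}=1$ and $\frac{3p}{1-p}=3$. Setting $p=\frac{1}{2}$ in the reliability recursions \eqref{olga2}, \eqref{olga1} and \eqref{olga0} then yields at once the three stated formulas for $T_{2,n+1}(1,2)$, $N_{n+1}(1,2)$ and $M_{n+1}(1,2)$. Equivalently, one may substitute $x=1$ and $y=2$ directly into \eqref{pigro2}, \eqref{pigro1} and \eqref{pigro0}, in which case every factor $(x-1)$, $(x-1)^2$, $(x-1)^3$ vanishes and the prefactor $(y-1)$ becomes $1$, leaving precisely the claimed polynomials. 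The initial conditions follow by evaluating $T_{2,1}(x,y)=y+2$ and $N_1=M_1=1$ at $x=1$, $y=2$, giving $T_{2,1}(1,2)=4$ and $N_1(1,2)=M_1(1,2)=1$.

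I do not expect any genuine obstacle: the substantive content was already established in Theorem \ref{ricorsivesierpinski} and Lemma \ref{lemmafattori}, and this statement is a clean specialization of those results. The only point demanding a moment of care is verifying that the higher-order factors $(x-1)$, $(x-1)^2$ and $(x-1)^3$ appearing in \eqref{pigro1} and \eqref{pigro0} do indeed annihilate their respective terms at $x=1$, so that no spurious contributions survive; this is routine.
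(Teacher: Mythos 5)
Your proposal is correct and follows essentially the same route as the paper: it invokes Lemma \ref{lemmafattori} to get $T_{1,n}(1,y)=T_{0,n}(1,y)=0$ and hence $T_n(1,2)=T_{2,n}(1,2)$, applies Formula (2) of Theorem \ref{evaluations}, and reads off the recursions by specializing \eqref{pigro2}--\eqref{pigro0} at $(x,y)=(1,2)$ (equivalently $p=\tfrac12$ in \eqref{olga2}--\eqref{olga0}), exactly as the paper does. The evaluations of the surviving terms and the initial conditions all check out.
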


\begin{proof}
By Lemma \ref{lemmafattori}, one has $T_{1,n}(1,y) =
T_{0,n}(1,y)=0$, for every $y\in \mathbb{R}$. Therefore
$T_n(1,2)=T_{2,n}(1,2)$ and it suffices to apply Formula (2) of
Theorem \ref{evaluations}.
\end{proof}

\begin{os}\rm
This specialization of $T_n(x,y)$ returns the equations obtained
in \cite{taiwan2} without using the Tutte polynomial.
\end{os}

Another interesting computation concerns the number of spanning
forests of the Sierpi\'{n}ski graph $\Gamma_n$.
\begin{prop}\label{spannforsierp}
The number of spanning forests of $\Gamma_n$ is given by
$$
T_{n}(2,1) = T_{2,n}(2,1) + 3N_n(2,1)+M_n(2,1),
$$
where
\begin{eqnarray*}
T_{2,n+1}(2,1)=6T_{2,n}^2N_{n}+ 3T_{2,n}N_{n}^2
\end{eqnarray*}
\begin{eqnarray*}
N_{n+1}(2,1)=T_{2,n}^2M_{n}+7T_{2,n}N_{n}^2+2T_{2,n}N_{n}M_{n}+4N_{n}^3
+N_{n}^2M_{n}
\end{eqnarray*}
\begin{eqnarray*}
M_{n+1}(2,1)=12T_{2,n}N_{n}M_{n}+3T_{2,n}M_{n}^2 +14N_{n}^3
+24N_{n}^2M_{n}+9N_{n}M_{n}^2+M_{n}^3,
\end{eqnarray*}
with initial conditions
$$
T_{2,1}(2,1)=3 \qquad N_1(2,1)=M_1(2,1)=1.
$$
\end{prop}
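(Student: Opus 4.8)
The plan is to combine the spanning-forest evaluation of the Tutte polynomial from Theorem \ref{evaluations} with the reduced recursions \eqref{pigro2}, \eqref{pigro1}, \eqref{pigro0}, specialized at the point $(x,y)=(2,1)$. First I would invoke item (3) of Theorem \ref{evaluations}, which asserts that $T_n(2,1)$ equals the number of spanning forests of $\Gamma_n$; it then only remains to rewrite this evaluation in terms of $T_{2,n}$, $N_n$, $M_n$ and to read off the recursion for each of these three quantities.

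For the splitting identity I would start from the decomposition $T_n=T_{2,n}+3T_{1,n}+T_{0,n}$ of Theorem \ref{ricorsivesierpinski}, together with the factorizations $T_{1,n}(x,y)=(x-1)N_n(x,y)$ and $T_{0,n}(x,y)=(x-1)^2M_n(x,y)$ from \eqref{semplificate}. Substituting $x=2$ makes every factor $(x-1)$ equal to $1$, so that $T_{1,n}(2,1)=N_n(2,1)$ and $T_{0,n}(2,1)=M_n(2,1)$; hence $T_n(2,1)=T_{2,n}(2,1)+3N_n(2,1)+M_n(2,1)$, which is precisely the first displayed identity of the statement.

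The recursive relations then follow by plugging $(x,y)=(2,1)$ into the reduced equations \eqref{pigro2}, \eqref{pigro1} and \eqref{pigro0}. The crucial observation is that two simplifications occur simultaneously at this point: the factor $(y-1)$ vanishes, annihilating every \lq\lq first type\rq\rq\ term (geometrically, the contributions in which the three special vertices lie in a common connected component), while each power $(x-1)$, $(x-1)^2$, $(x-1)^3$ collapses to $1$. Thus \eqref{pigro2} reduces to $T_{2,n+1}(2,1)=6T_{2,n}^2N_n+3T_{2,n}N_n^2$, \eqref{pigro1} reduces to $N_{n+1}(2,1)=T_{2,n}^2M_n+7T_{2,n}N_n^2+2T_{2,n}N_nM_n+4N_n^3+N_n^2M_n$, and \eqref{pigro0} reduces to $M_{n+1}(2,1)=12T_{2,n}N_nM_n+3T_{2,n}M_n^2+14N_n^3+24N_n^2M_n+9N_nM_n^2+M_n^3$, exactly as claimed. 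The initial conditions come from evaluating $T_{2,1}(x,y)=y+2$ at $y=1$, giving $T_{2,1}(2,1)=3$, together with $N_1(2,1)=M_1(2,1)=1$ inherited from \eqref{semplificate}.

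Since every step is a direct substitution, there is no genuine obstacle; the only point requiring care is to track correctly which monomials survive the vanishing of $(y-1)$ and the trivialization of the $(x-1)$ factors, so that no term is dropped or duplicated. In particular one should make sure to apply the reduced equations \eqref{pigro2}--\eqref{pigro0} rather than \eqref{t2n}--\eqref{t0n}, since the latter still carry the explicit $\frac{1}{x-1}$ denominators whose cancellation against the hidden factors of $T_{1,n}$ and $T_{0,n}$ has already been performed in passing to the reduced form.
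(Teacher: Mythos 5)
Your proposal is correct and follows essentially the same route as the paper: apply item (3) of Theorem \ref{evaluations} and observe that at $x=2$ the factorizations \eqref{semplificate} give $T_{1,n}(2,1)=N_n(2,1)$ and $T_{0,n}(2,1)=M_n(2,1)$, after which the recursions are direct substitutions of $(2,1)$ into \eqref{pigro2}--\eqref{pigro0}. (Your closing caution is harmless but unnecessary here: since $x=2\neq 1$, substituting into \eqref{t2n}--\eqref{t0n} directly would work just as well.)
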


\begin{proof}
It suffices to apply Formula (3) of Theorem \ref{evaluations} and
to observe that $T_{1,n}(2,y)= N_n(2,y)$ and
$T_{0,n}(2,y)=M_n(2,y)$, for each $y\in \mathbb{R}$.
\end{proof}

\begin{os}\rm
This specialization of $T_n(x,y)$ returns the equations obtained
in \cite{taiwan3} without using the Tutte polynomial.
\end{os}

Next, we explicitly verify that, by evaluating the Tutte
polynomial of $\Gamma_n$ in $(2,2)$, one gets $2^{|E(\Gamma_n)|}$
(see Formula (4) of Theorem \ref{evaluations}).

\begin{prop}
For each $n\geq 1$, one has $T_n(2,2)=
2^{|E(\Gamma_{n})|}=2^{3^n}$.
\end{prop}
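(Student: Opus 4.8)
The quickest route is to invoke Formula~(4) of Theorem~\ref{evaluations}: as $\Gamma_n$ is connected and we have already recorded $|E(\Gamma_n)|=3^n$, the identity $T_n(2,2)=2^{|E(\Gamma_n)|}=2^{3^n}$ follows at once. The point of the statement, however, is to check that the recursive description of Theorem~\ref{ricorsivesierpinski} is consistent with this known value, so the plan is to derive $T_n(2,2)=2^{3^n}$ directly from the reduced recursions \eqref{pigro2}, \eqref{pigro1} and \eqref{pigro0}.

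First I would specialize to $x=2$, $y=2$. Since then $x-1=y-1=1$, every factor $\frac{1}{x-1}$ and every power $(x-1)^k$ disappears, and the three recursions become polynomial recursions with explicit integer coefficients in
$$
a:=T_{2,n}(2,2),\qquad b:=N_n(2,2),\qquad c:=M_n(2,2).
$$
Invoking Lemma~\ref{lemmafattori} in the form $T_{1,n}(2,y)=N_n(2,y)$ and $T_{0,n}(2,y)=M_n(2,y)$, the quantity to be controlled is
$$
S_n:=T_n(2,2)=T_{2,n}(2,2)+3N_n(2,2)+M_n(2,2)=a+3b+c.
$$

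The key step is the claim $S_{n+1}=S_n^{\,3}$. To prove it I would form the combination $T_{2,n+1}(2,2)+3N_{n+1}(2,2)+M_{n+1}(2,2)$ from the three specialized recursions, obtaining a homogeneous cubic in $a,b,c$, and compare it monomial by monomial with the expansion of $(a+3b+c)^3$. Each of the ten monomial types $a^3,a^2b,a^2c,ab^2,ac^2,b^3,b^2c,bc^2,c^3,abc$ must carry the same coefficient on both sides; for instance the coefficient of $b^3$ is $12$ (from $3N_{n+1}$) plus $15$ (from $M_{n+1}$), matching the $27$ in the cube, while the coefficient of $abc$ is $6+12=18$, again matching. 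This coefficient bookkeeping is the only delicate point, and it is entirely mechanical.

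Once $S_{n+1}=S_n^{\,3}$ is in hand, I would conclude by induction. The initial conditions $T_{2,1}(2,2)=(y+2)|_{y=2}=4$ and $N_1(2,2)=M_1(2,2)=1$ give $S_1=4+3+1=8=2^3$, whence $S_n=S_1^{\,3^{n-1}}=\bigl(2^3\bigr)^{3^{n-1}}=2^{3^n}=2^{|E(\Gamma_n)|}$, as claimed.
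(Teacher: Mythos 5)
Your proposal is correct and follows essentially the same route as the paper: an induction whose key step is the identity $T_{n+1}(2,2)=T_n(2,2)^3$, which the paper dismisses as ``an easy computation'' and which you carry out explicitly by matching the specialized recursions against the expansion of $(a+3b+c)^3$ (your sample coefficient checks for $b^3$ and $abc$ are accurate, and the remaining eight monomials do match). The only difference is presentational: you make the mechanical verification visible, while the paper omits it.
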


\begin{proof}
We prove the assertion by induction. For $n=1$, we have
$T_1(2,2)=8=2^3=2^{ |E(\Gamma_{1})|}$. Then, we recall that
$|E(\Gamma_{n})|=3^n$, therefore
$|E(\Gamma_{n+1})|=3^{n+1}=3|E(\Gamma_{n})|$. An easy computation
shows that  $T_{n+1}(2,2)= T_{n}(2,2)^3$; therefore,
$T_{n+1}(2,2)= T_{n}(2,2)^3= \left(2^{ |E(\Gamma_{n})|}\right)^3=
 2^{ 3|E(\Gamma_{n})|}= 2^{ |E(\Gamma_{n+1})|}     $.
\end{proof}

Finally, by evaluating the Tutte polynomial of $\Gamma_n$ in
$(2,0)$, we investigate the number of acyclic orientations of
$\Gamma_n$.

\begin{prop}\label{propacycsierp}
The number of acyclic orientations on $\Gamma_n$ is $T_n(2,0)$,
with
$$
T_{n+1}(2,0) = T_{n}(2,0)^3 -
2\left(T_{2,n}(2,0)+N_{n}(2,0)\right)^3
$$
and
\begin{eqnarray*}
T_{2,n+1}(2,0) = -T_{2,n}^3 +6T_{2,n}^2N_{n}+3T_{2,n}N_{n}^2
\end{eqnarray*}
\begin{eqnarray*}
N_{n+1}(2,0)=
-T_{2,n}^2N_{n}+T_{2,n}^2M_{n}+7T_{2,n}N_{n}^2+2T_{2,n}N_{n}M_{n}+4N_{n}^3
+N_{n}^2M_{n}
\end{eqnarray*}
\begin{eqnarray*}
M_{n+1}(2,0)=-3T_{2,n}N_{n}^2+ 12T_{2,n}N_{n}M_{n}+3T_{2,n}M_{n}^2
+ 13N_{n}^3+24N_{n}^2M_{n}+9N_{n}M_{n}^2+M_{n}^3,
\end{eqnarray*}
with initial conditions
$$
T_{2,1}(2,0)=2 \qquad N_1(2,0)=M_1(2,0)=1.
$$
\end{prop}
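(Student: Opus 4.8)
The plan is to read the three componentwise recursions straight off the reduced Equations \eqref{pigro2}, \eqref{pigro1} and \eqref{pigro0}, and then to repackage them into the single compact relation. First I would invoke item (5) of Theorem \ref{evaluations}, which guarantees that the number of acyclic orientations of $\Gamma_n$ equals $T_n(2,0)$; so it only remains to compute this evaluation recursively. Substituting $x=2$ and $y=0$ into \eqref{pigro2}--\eqref{pigro0} is mechanical: the factor $y-1$ becomes $-1$ (flipping the sign of the ``first type'' contributions, which is why negative signs appear), while $x-1=1$ trivializes all the positive powers of $(x-1)$ multiplying the remaining (second-type) terms. This yields exactly the stated recursions for $T_{2,n}(2,0)$, $N_n(2,0)$ and $M_n(2,0)$, with initial conditions obtained from $T_{2,1}(x,y)=y+2$ and $N_1(x,y)=M_1(x,y)=1$ evaluated at $(2,0)$.

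For the compact formula I would first record that $T_n(2,0)=T_{2,n}(2,0)+3N_n(2,0)+M_n(2,0)$. Indeed $T_n=T_{2,n}+3T_{1,n}+T_{0,n}$, and by \eqref{semplificate} one has $T_{1,n}(2,y)=N_n(2,y)$ and $T_{0,n}(2,y)=M_n(2,y)$, since $x-1=1$ at $x=2$. Writing $a=T_{2,n}(2,0)$, $b=N_n(2,0)$, $c=M_n(2,0)$ for brevity, summing the three componentwise recursions with multiplicities $1,3,1$ gives $T_{n+1}(2,0)=T_{2,n+1}(2,0)+3N_{n+1}(2,0)+M_{n+1}(2,0)$ as an explicit homogeneous cubic in $a,b,c$.

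The final step, which I regard as the crux, is to recognize this cubic as $T_n(2,0)^3-2\bigl(T_{2,n}(2,0)+N_n(2,0)\bigr)^3$, that is, as $(a+3b+c)^3-2(a+b)^3$. Conceptually the shape is forced by the spanning-subgraph analysis preceding Theorem \ref{ricorsivesierpinski}: each spanning subgraph $A$ of $\Gamma_{n+1}$ corresponds bijectively to a triple $(A_1,A_2,A_3)$, and at $(2,0)$ a first-type triple contributes the product of its terms with the factor $y-1=-1$, whereas a second-type triple contributes it with the factor $\frac{1}{x-1}=+1$. Writing $t_i$ for the value at $(2,0)$ of the term associated with $A_i$, one has $\sum_{\text{triples}}\prod_i t_i=T_n(2,0)^3$ by the product structure, so $T_{n+1}(2,0)=\sum_{\text{triples}}\prod_i t_i-2\sum_{\text{first type}}\prod_i t_i=T_n(2,0)^3-2\sum_{\text{first type}}\prod_i t_i$, and the subtracted sum collapses to the perfect cube $(a+b)^3$. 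I would confirm this equality rigorously by expanding both cubics and matching the coefficients of $a^3,a^2b,\dots,c^3$; the main obstacle is precisely this coefficient bookkeeping (and the observation that the first-type sum is exactly $(a+b)^3$), since everything in the first two paragraphs is a routine substitution.
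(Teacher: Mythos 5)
Your proposal is correct and follows essentially the same route as the paper: the paper's proof likewise consists of applying Formula (5) of Theorem \ref{evaluations}, reading the componentwise recursions off the reduced equations at $(x,y)=(2,0)$, and then directly verifying that $T_{2,n+1}(2,0)+3T_{1,n+1}(2,0)+T_{0,n+1}(2,0)$ equals $T_n(2,0)^3-2\left(T_{2,n}(2,0)+N_n(2,0)\right)^3$. Your added conceptual explanation of the compact formula (the all-triples sum factoring as $T_n(2,0)^3$ and the first-type sum collapsing to $\left(T_{2,n}(2,0)+N_n(2,0)\right)^3$) is a nice gloss the paper omits, and the coefficient check you propose as a fallback does go through.
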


\begin{proof}
It suffices to apply Formula (5) of Theorem \ref{evaluations}.
Then, one can directly verify that $T_{2,n+1}(2,0) + 3
T_{1,n+1}(2,0) +T_{0,n+1}(2,0)$ can be rewritten as $T_{n}(2,0)^3
- 2\left(T_{2,n}(2,0)+N_{n}(2,0)\right)^3$.
\end{proof}

\begin{os}\rm
In \cite{chang}, the author obtains recursively the number $f(n)$
of acyclic orientations of $\Gamma_n$ as a sum of four
contributions, namely $f(n)=6a(n)+6b(n)+6c(n)+d(n)$. On the other
hand, via the Tutte polynomial, we need to introduce only three
contributions, corresponding to $T_{2,n}(2,0), N_n(2,0),
M_n(2,0)$. Moreover, it is not difficult to show by induction that
the following correspondences hold:
$$
T_{2,n}(2,0) = 2a(n)+b(n) \quad T_{1,n}(2,0)=a(n)+b(n)+c(n) \quad
T_{0,n}(2,0) = a(n)+2b(n)+3c(n)+d(n).
$$
\end{os}

Next, we study the chromatic polynomial of $\Gamma_n$.

\begin{prop}\label{propchromsierp}
For each $n\geq 1$, the chromatic polynomial $\chi_n(\lambda)$ of
the Sierpi\'{n}ski graph $\Gamma_n$ is
$$
\chi_n(\lambda) = (-1)^{\frac{3^n+1}{2}}\lambda P_n(\lambda),
$$
where $P_n(\lambda) = P_{2,n}(\lambda) + 3P_{1,n}(\lambda) +
P_{0,n}(\lambda)$, and
\begin{eqnarray*}
P_{2,n+1}(\lambda) =
-P_{2,n}^3-\frac{1}{\lambda}\left(6P_{2,n}^2P_{1,n}+3P_{2,n}P_{1,n}^2\right)
\end{eqnarray*}
\begin{eqnarray*}
P_{1,n+1}(\lambda)&=&
-P_{2,n}^2P_{1,n}-\frac{1}{\lambda}\left(P_{2,n}^2P_{0,n}+7P_{2,n}P_{1,n}^2\right.\\&+&\left.2P_{2,n}P_{1,n}P_{0,n}+4P_{1,n}^3
+P_{1,n}^2P_{0,n}\right)\nonumber
\end{eqnarray*}
\begin{eqnarray*}
P_{0,n+1}(\lambda)&=&-\left(3P_{2,n}P_{1,n}^2+P_{1,n}^3\right)-\frac{1}{\lambda}\left(12P_{2,n}P_{1,n}P_{0,n}\right.
\\
&+&\left.3P_{2,n}P_{0,n}^2+14P_{1,n}^3+24P_{1,n}^2P_{0,n}+9P_{1,n}P_{0,n}^2+P_{0,n}^3\right),\nonumber
\end{eqnarray*}
with initial conditions
$$
P_{2,1}(\lambda)=2 \qquad P_{1,1}(\lambda)=-\lambda \qquad P_{0,1}(\lambda) =\lambda^2.
$$
\end{prop}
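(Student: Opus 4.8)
The plan is to deduce the statement entirely from part~(2) of Theorem~\ref{twopolynomials}, which expresses the chromatic polynomial as an evaluation of the Tutte polynomial, together with the recursive description of $T_n(x,y)$ provided by Theorem~\ref{ricorsivesierpinski}. No new combinatorial analysis of spanning subgraphs is required: the whole argument is a specialisation of the already-established recursion at the point $x=1-\lambda$, $y=0$.

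First I would compute the relevant invariants of $\Gamma_n$. Since $\Gamma_n$ is connected, $k(\Gamma_n)=1$, and from $|V(\Gamma_n)|=\frac{3^n+3}{2}$ we obtain
$$
r(\Gamma_n)=|V(\Gamma_n)|-k(\Gamma_n)=\frac{3^n+3}{2}-1=\frac{3^n+1}{2}.
$$
Plugging $k(\Gamma_n)=1$ and $r(\Gamma_n)=\frac{3^n+1}{2}$ into Theorem~\ref{twopolynomials}(2) gives
$$
\chi_n(\lambda)=(-1)^{\frac{3^n+1}{2}}\lambda\,T_n(1-\lambda,0),
$$
so it remains to show that $P_n(\lambda)=T_n(1-\lambda,0)$ and that this evaluation splits along the three families $D_{2,n},D_{1,n},D_{0,n}$ in the stated way.

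The key step is to define $P_{2,n}(\lambda)=T_{2,n}(1-\lambda,0)$, $P_{1,n}(\lambda)=T_{1,n}(1-\lambda,0)$ and $P_{0,n}(\lambda)=T_{0,n}(1-\lambda,0)$. With this choice, $P_n(\lambda)=P_{2,n}(\lambda)+3P_{1,n}(\lambda)+P_{0,n}(\lambda)$ follows at once from the decomposition $T_n=T_{2,n}+3T_{1,n}+T_{0,n}$ of Theorem~\ref{ricorsivesierpinski}. The initial conditions come by direct substitution into the initial data of that theorem: from $T_{2,1}=y+2$, $T_{1,1}=x-1$ and $T_{0,1}=(x-1)^2$ one reads off $P_{2,1}(\lambda)=2$, $P_{1,1}(\lambda)=-\lambda$ and $P_{0,1}(\lambda)=\lambda^2$. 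The recursions for the $P_{i,n}$ are then obtained by substituting $x=1-\lambda$ and $y=0$ into Equations~\eqref{t2n}, \eqref{t1n} and \eqref{t0n}, using the two crucial specialisations $y-1=-1$ and $\frac{1}{x-1}=-\frac{1}{\lambda}$. For example, \eqref{t2n} turns into $P_{2,n+1}=-P_{2,n}^3-\frac{1}{\lambda}\bigl(6P_{2,n}^2P_{1,n}+3P_{2,n}P_{1,n}^2\bigr)$, and the same substitution in \eqref{t1n} and \eqref{t0n} yields the displayed formulas for $P_{1,n+1}$ and $P_{0,n+1}$.

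The only point demanding care---rather than a genuine obstacle---is the sign bookkeeping: one must consistently track that every factor $(x-1)$ becomes $-\lambda$, every factor $(y-1)$ becomes $-1$, and that these mesh correctly with the global prefactor $(-1)^{\frac{3^n+1}{2}}\lambda$ coming from Theorem~\ref{twopolynomials}(2). Because the recursive structure is inherited verbatim from Theorem~\ref{ricorsivesierpinski} and only the coefficients change under the specialisation, the verification is routine once the substitution $x=1-\lambda$, $y=0$ is carried through.
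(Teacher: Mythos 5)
Your proposal is correct and is essentially identical to the paper's own (very brief) proof: the paper likewise sets $P_{i,n}(\lambda)=T_{i,n}(1-\lambda,0)$ and invokes Theorem \ref{twopolynomials}(2), with the recursions and initial conditions following by substituting $x=1-\lambda$, $y=0$ into Theorem \ref{ricorsivesierpinski}. Your computation of $r(\Gamma_n)=\frac{3^n+1}{2}$, $k(\Gamma_n)=1$ and the sign bookkeeping $(y-1)\mapsto -1$, $\frac{1}{x-1}\mapsto -\frac{1}{\lambda}$ are exactly the details the paper leaves implicit.
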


\begin{proof}
It is an easy consequence of Equation (2) of Theorem
\ref{twopolynomials}, with the convention $P_{i,n}(\lambda)=
T_{i,n}(1-\lambda,0)$, for each $n\geq 1$ and $i=0,1,2$.
\end{proof}
It is known that the chromatic number $\chi(\Gamma_n)$ is $3$.
Using the Tutte polynomial, we are able to prove the following
stronger result about the colorability of $\Gamma_n$.

\begin{prop}\label{colorabilitysierp}
The graph $\Gamma_n$ is uniquely $3$-colorable.
\end{prop}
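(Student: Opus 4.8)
The plan is to count the proper $3$-colorings of $\Gamma_n$ by means of the chromatic polynomial computed in Proposition \ref{propchromsierp}, and then to invoke the standard fact that a connected graph $G$ with $\chi(G)=k$ is uniquely $k$-colorable precisely when it admits exactly $k!$ proper $k$-colorings. Since it is already known that $\chi(\Gamma_n)=3$, the whole problem reduces to proving that $\chi_n(3)=6$.

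First I would evaluate the recursion of Proposition \ref{propchromsierp} at $\lambda=3$. Writing $(a_n,b_n,c_n)=(P_{2,n}(3),P_{1,n}(3),P_{0,n}(3))$, the initial data are $(a_1,b_1,c_1)=(2,-3,9)$, and a direct substitution into the three recursive relations (with $1/\lambda=1/3$) gives $(a_2,b_2,c_2)=(-2,3,-9)=-(a_1,b_1,c_1)$. The key observation is that the right-hand sides of the recursions for $P_{2,n+1},P_{1,n+1},P_{0,n+1}$ are homogeneous of degree $3$ in $(P_{2,n},P_{1,n},P_{0,n})$. Hence, denoting by $F$ the recursion map and by $v_1=(2,-3,9)$, the single computation $F(v_1)=-v_1$ together with homogeneity yields by induction $v_n=(-1)^{n+1}v_1$: indeed $F((-1)^{n+1}v_1)=(-1)^{3(n+1)}F(v_1)=(-1)^{n+2}v_1$. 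Consequently $P_n(3)=a_n+3b_n+c_n=(-1)^{n+1}(2-9+9)=(-1)^{n+1}\cdot 2$.

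Plugging this into the formula $\chi_n(\lambda)=(-1)^{(3^n+1)/2}\lambda\,P_n(\lambda)$ of Proposition \ref{propchromsierp} gives $\chi_n(3)=6\cdot(-1)^{(3^n+1)/2+n+1}$. It then remains to check that this exponent is even: since $3^n\equiv 3\pmod 4$ for $n$ odd and $3^n\equiv 1\pmod 4$ for $n$ even, the integer $(3^n+1)/2$ is even exactly when $n$ is odd, so $(3^n+1)/2+n$ is always odd and the exponent $(3^n+1)/2+n+1$ is always even. Therefore $\chi_n(3)=6$ for every $n\geq 1$.

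Finally I would conclude by the counting argument. Because $\chi(\Gamma_n)=3$, no proper $3$-coloring can use fewer than three colors, so every proper $3$-coloring partitions $V(\Gamma_n)$ into exactly three nonempty color classes; conversely, a fixed such partition yields exactly $3!=6$ proper colorings, one for each bijection between the three classes and the three colors, and distinct partitions give disjoint families of colorings. Since the total number of proper $3$-colorings is $\chi_n(3)=6$, there is exactly one such partition, which is precisely the assertion that $\Gamma_n$ is uniquely $3$-colorable. The only genuinely delicate points are the base computation $F(v_1)=-v_1$ and the parity check of the exponent; everything else is forced by the homogeneity of the recursion and by the standard characterization of unique colorability.
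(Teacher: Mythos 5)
Your proof is correct and follows essentially the same route as the paper: both evaluate the recursion of Proposition \ref{propchromsierp} at $\lambda=3$, establish $(P_{2,n}(3),P_{1,n}(3),P_{0,n}(3))=(-1)^{n+1}(2,-3,9)$ by induction, and deduce $\chi_n(3)=6$. Your homogeneity observation merely packages the paper's induction more efficiently (a single base computation $F(v_1)=-v_1$ suffices), and you make explicit the parity check of the exponent and the $3!$-counting argument that the paper leaves implicit.
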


\begin{proof}
It is easy to show, by induction, that for each $n\geq 1$ one has:
$$
P_{2,n}(3)= (-1)^{n+1}2 \qquad P_{1,n}(3)= (-1)^n3 \qquad
P_{0,n}(3) = (-1)^{n+1}9.
$$
Therefore, $P_n(3) = (-1)^{n+1}2$ and so
$\chi_n(3)=(-1)^{\frac{3^n+1}{2}+n+1}6=6$, showing that $\Gamma_n$
is uniquely $3$-colorable (up to permutation of the colors).
\end{proof}

\begin{os}\rm
The same result has been proven in \cite[Theorem 3.1]{coloring},
where the author uses the stronger induction assumption that
$\Gamma_n$ is uniquely $3$-colorable and in every $3$-coloring the
outmost vertices have different colors.
\end{os}

We end this section by investigating the relationship between the
evaluation of the Tutte polynomial of the Sierpi\'{n}ski graph
$\Gamma_n$ on the hyperbola $(x-1)(y-1)=2$ and the partition
function of the Ising model on the same graph. In \cite[Theorem
3.5]{noiising}, the partition function of the Ising model on
$\Gamma_n$ has been described as
$$
Z_n= 2^{\frac{3^n+3}{2}}\cosh(\beta J)^{3^n}\Phi_n(\tanh(\beta
J)),
$$
with
$$
\Phi_n(z)=z^{\frac{3^n}{2}}\prod_{k=1}^n\phi_k^{3^{n-k}}(z)(\phi_{n+1}(z)-1),
$$
where $\phi_1(z) = \frac{z+1}{z^{1/2}}$,
$\phi_2(z)=\frac{z^2+1}{z}$ and
$\phi_k(z)=\phi_{k-1}^2(z)-3\phi_{k-1}(z)+4$, for each $k\geq 3$.

\begin{teo}\label{thmisingsierp}
For each $n\geq 1$, one has
\begin{eqnarray}\label{isingbase}
2(e^{2\beta J}-1)^{|V(\Gamma_n)|-1}e^{-\beta
J|E(\Gamma_n)|}T_n\left(\frac{e^{2\beta J}+1}{e^{2\beta
J}-1},e^{2\beta J}\right) = Z_n.
\end{eqnarray}
\end{teo}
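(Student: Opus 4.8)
The plan is to turn \eqref{isingbase} into a single algebraic identity in the variable $y$ and then prove that identity by induction, feeding in the recursion of Theorem \ref{ricorsivesierpinski}. First I would record the change of variables forced by the evaluation point: set $y=e^{2\beta J}$ and $x=\frac{y+1}{y-1}$, so that $(x-1)(y-1)=2$, hence $\frac{1}{x-1}=\frac{y-1}{2}$, and $z:=\tanh(\beta J)=\frac{y-1}{y+1}=\frac1x$. Using $|V(\Gamma_n)|=\frac{3^n+3}{2}$, $|E(\Gamma_n)|=3^n$, together with $\cosh(\beta J)=\frac{y+1}{2\sqrt y}$ and $e^{-\beta J}=y^{-1/2}$, I would substitute these into the prefactor $2(e^{2\beta J}-1)^{|V(\Gamma_n)|-1}e^{-\beta J|E(\Gamma_n)|}$ and into the explicit expression for $Z_n$ from \cite[Theorem 3.5]{noiising}. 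After cancelling the common powers of $2$, $y$, $(y-1)$ and $(y+1)$, the claim \eqref{isingbase} becomes equivalent to the identity
$$T_n(x,y)=2^{\frac{1-3^n}{2}}(y-1)^{-\frac12}(y+1)^{\frac{3^n}{2}}\prod_{k=1}^n\phi_k(z)^{3^{n-k}}\bigl(\phi_{n+1}(z)-1\bigr),$$
which I will call $(\star)$, the right-hand side being evaluated at $z=\frac{y-1}{y+1}$.

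The base case $n=1$ I would check by a short direct computation. On the hyperbola, $T_1(x,y)=(y+2)+3(x-1)+(x-1)^2$ becomes $(y+2)+\frac{6}{y-1}+\frac{4}{(y-1)^2}=\frac{y(y^2+3)}{(y-1)^2}$. For the right-hand side of $(\star)$ with $n=1$ one has $\phi_1(z)\bigl(\phi_2(z)-1\bigr)=\frac{z^3+1}{z^{3/2}}$, and $z^3+1=\frac{2y(y^2+3)}{(y+1)^3}$, which after simplification gives the same rational function $\frac{y(y^2+3)}{(y-1)^2}$; this also fixes all the half-integer powers so that they cancel consistently.

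For the inductive step I would substitute $\frac{1}{x-1}=\frac{y-1}{2}$ into \eqref{t2n}, \eqref{t1n} and \eqref{t0n} to obtain the hyperbola-specialized recursion for the triple $(T_{2,n},T_{1,n},T_{0,n})$, and prove the stronger statement that, on the hyperbola, each of $T_{2,n}$, $T_{1,n}$, $T_{0,n}$ equals the common self-similar factor $2^{\frac{1-3^n}{2}}(y-1)^{-1/2}(y+1)^{3^n/2}\prod_{k=1}^n\phi_k(z)^{3^{n-k}}$ times an explicit \emph{boundary weight} attached to the three special vertices, the three weights summing to $\phi_{n+1}(z)-1$ so that $(\star)$ follows. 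The mechanism is that the product $\prod_{k=1}^n\phi_k^{3^{n-k}}$ gets cubed and acquires one new factor $\phi_{n+1}$ at each level, which is exactly the cubic homogeneity visible in the three recursions; this is the Fortuin--Kasteleyn/decimation picture, with $T_{2,n},T_{1,n},T_{0,n}$ playing the role of Potts partition functions with prescribed boundary connectivity of the three corners.

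The main obstacle will be precisely this last matching: one must show that the three coupled \emph{cubic} recursions collapse, after the hyperbola specialization, into the single \emph{quadratic} decimation recursion $\phi_{n+2}=\phi_{n+1}^2-3\phi_{n+1}+4$. Concretely, the work lies in identifying the correct renormalization variable built from the ratios $T_{1,n}/T_{2,n}$ and $T_{0,n}/T_{2,n}$ that reproduces $\phi_{n+1}(z)$, and in checking that the initial data $T_{2,1}=y+2$, $T_{1,1}=x-1$, $T_{0,1}=(x-1)^2$ correspond to $\phi_1,\phi_2$. Conceptually the outcome is not in doubt: the general Potts--Ising correspondence recalled in Section \ref{2.1} (see \cite{potts}) already guarantees that the left-hand side of \eqref{isingbase} is the Ising partition function of $\Gamma_n$, which by \cite[Theorem 3.5]{noiising} equals $Z_n$; the inductive argument above re-derives this directly from the deletion--contraction structure encoded in $T_{2,n},T_{1,n},T_{0,n}$.
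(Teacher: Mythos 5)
Your reduction is sound and follows the paper's strategy: the change of variables, the equivalent identity $(\star)$, and the base case $n=1$ all check out (the paper organizes the same computation with $t=e^{\beta J}$ and prefactors $C_n,D_n$ satisfying $C_{n+1}=C_n^3/(4(t^2-1))$, $D_{n+1}=D_n^3/8$). The problem is that the inductive step --- which you yourself flag as ``the main obstacle'' and ``the work'' --- is exactly the content of the proof, and it is not supplied. You propose to show that each of $T_{2,n},T_{1,n},T_{0,n}$ separately equals the common self-similar factor times a boundary weight; but that is not the normalization that closes. The missing idea is that one should pass to the two linear combinations
$$
A_n=T_{2,n}+T_{1,n},\qquad B_n=2T_{1,n}+T_{0,n},
$$
so that $T_n=A_n+B_n$, and observe that on the hyperbola $(x-1)(y-1)=2$ the three coupled cubic recursions \eqref{t2n}--\eqref{t0n} collapse into the factored pair
$$
A_{n+1}=\tfrac12(y-1)A_n^2(2A_n+B_n),\qquad B_{n+1}=\tfrac12(y-1)B_n(2A_n+B_n)(A_n+B_n).
$$
The induction then carries the two statements $C_nA_n=D_n\prod_{k=1}^n\phi_k^{3^{n-k}}$ and $C_nB_n=D_n\prod_{k=1}^n\phi_k^{3^{n-k}}(\phi_{n+1}-2)$, and the step closes because $(\phi_{n+1}-2)\cdot\phi_{n+1}\cdot(\phi_{n+1}-1)=\phi_{n+1}(\phi_{n+1}^2-3\phi_{n+1}+2)=\phi_{n+1}(\phi_{n+2}-2)$, which is precisely how the quadratic decimation map $\phi_{k+1}=\phi_k^2-3\phi_k+4$ emerges from the cubic structure. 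Without identifying these two combinations (rather than three separate weights, whose individual ratios to the common factor need not be polynomials in $\phi_{n+1}$ at all), the induction you describe cannot be completed.

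A secondary remark: your closing observation --- that the left-hand side of \eqref{isingbase} equals the Ising partition function for \emph{any} graph by the general Potts--Ising correspondence of \cite{potts}, and that \cite[Theorem 3.5]{noiising} identifies that partition function with $Z_n$ --- would, if promoted from a reassurance to the actual argument, constitute a complete proof by citation. But that is not what you set out to do, nor what the paper does; the point of the theorem is the explicit recursive verification, and that verification is where your proposal stops short.
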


\begin{proof}
Recall that $|E(\Gamma_n)| = 3^n$ and $|V(\Gamma_n)|=
\frac{3^n+3}{2}$. Let $e^{\beta J}=t$, so that Equation
\eqref{isingbase} can be written as
$$
\frac{2(t^2-1)^{\frac{3^n+1}{2}}}{t^{3^n}}T_n\left(\frac{t^2+1}{t^2-1},t^2\right)=
2^{\frac{3^n+3}{2}}\left(\frac{t^2+1}{2t}\right)^{3^n}\!\!\Phi_n\left(\frac{t^2-1}{t^2+1}\right),
$$
or, more explicitly,
$$
\frac{2(t^2-1)^{\frac{3^n+1}{2}}}{t^{3^n}}T_n\left(\frac{t^2+1}{t^2-1},t^2\right)=2^{\frac{3^n+3}{2}}\left(\frac{t^2+1}{2t}\right)^{3^n}\!\!\!
\left(\frac{t^2-1}{t^2+1}\right)^{\frac{3^n}{2}}\!
\prod_{k=1}^n\phi_k^{3^{n-k}}(z)(\phi_{n+1}(z)-1)\left|_{z=\frac{t^2-1}{t^2+1}}\right..
$$
In order to prove this equation we put, for each $n\geq 1$:
$$
A_n(x,y)= T_{2,n}(x,y)+T_{1,n}(x,y) \qquad
B_n(x,y)=2T_{1,n}(x,y)+T_{0,n}(x,y)
$$
and
$$
C_n = \frac{2(t^2-1)^{\frac{3^n+1}{2}}}{t^{3^n}} \qquad \qquad
D_n= 2^{\frac{3^n+3}{2}}\left(\frac{t^2+1}{2t}\right)^{3^n}\!\!
{\left(\frac{t^2-1}{t^2+1}\right)}^{\frac{3^n}{2}}\!\!\!\!\!.
$$
Observe that
$$
T_n(x,y)=A_n(x,y)+B_n(x,y)
$$
and
$$
C_{n+1}= \frac{C_n^3}{4(t^2-1)} \qquad \qquad D_{n+1}=
\frac{D_n^3}{8}.
$$
Therefore, to prove the required equation is equivalent to prove
the following equalities:
\begin{eqnarray*}
C_nA_n\left(\frac{t^2+1}{t^2-1}, t^2\right) =
D_n\prod_{k=1}^n\phi_k^{3^{n-k}}(z)\left|_{z=\frac{t^2-1}{t^2+1}}\right.
\end{eqnarray*}
\begin{eqnarray*}
C_nB_n\left(\frac{t^2+1}{t^2-1}, t^2\right) =D_n
\prod_{k=1}^n\phi_k^{3^{n-k}}(z)(\phi_{n+1}(z)-2)\left|_{z=\frac{t^2-1}{t^2+1}}\right..
\end{eqnarray*}
We can prove them by induction, observing that Equations
\eqref{t2n}, \eqref{t1n} and \eqref{t0n}, evaluated on the
hyperbola $(x-1)(y-1)=2$, give:
$$
A_{n+1}= \frac{1}{2}(y-1)A_n^2(2A_n+B_n)
$$
$$
B_{n+1} = \frac{1}{2}(y-1)B_n(2A_n+B_n)(A_n+B_n),
$$
with initial conditions
$$
A_1\left(\frac{y+1}{y-1},y\right)=\frac{y(y+1)}{y-1} \qquad
B_1\left(\frac{y+1}{y-1},y\right)= \frac{4y}{(y-1)^2}.
$$
Indeed, for $n=1$, one has
$$
C_1A_1\left(\frac{t^2+1}{t^2-1}, t^2\right) =
D_1\phi_1(z)\left|_{z=\frac{t^2-1}{t^2+1}}\right. =
\frac{2(t^4-1)}{t}
$$
and
$$
C_1B_1\left(\frac{t^2+1}{t^2-1}, t^2\right) =
D_1\phi_1(z)(\phi_2(z)-2)\left|_{z=\frac{t^2-1}{t^2+1}}\right.=\frac{8}{t};
$$
thus, the assertion is true. Now
\begin{eqnarray*}
C_{n+1}A_{n+1}&=& \frac{1}{8}C_n^2A_n^2(2C_nA_n+C_nB_n)\\
&=&
\frac{1}{8}D_n^3\left(\prod_{k=1}^n\phi_k^{3^{n-k}}(z)\right)^3
\left(2+(\phi_{n+1}(z)-2)\right)\\
&=& D_{n+1}\prod_{k=1}^{n+1}\phi_k^{3^{n+1-k}}(z).
\end{eqnarray*}
Similarly, one has
\begin{eqnarray*}
C_{n+1}B_{n+1}&=& \frac{1}{8}C_nB_n(2C_nA_n+C_nB_n)(C_nA_n+C_nB_n)\\
&=&
\frac{1}{8}D_n^3\left(\prod_{k=1}^n\phi_k^{3^{n-k}}(z)\right)^3
(\phi_{n+1}(z)-2)(2+\phi_{n+1}(z)-2)(1+\phi_{n+1}(z)-2)\\
&=&
D_{n+1}\prod_{k=1}^{n+1}\phi_k^{3^{n+1-k}}(z)\left(\phi_{n+1}^2(z)-3\phi_{n+1}(z)+2\right)\\
&=&
D_{n+1}\prod_{k=1}^{n+1}\phi_k^{3^{n+1-k}}(z)(\phi_{n+2}(z)-2).
\end{eqnarray*}
\end{proof}

%%%%%%%%%%%%%%%%%%%%%%%%%%%%%%%%%%%%%%%%%%%%%%%%%%%%%%%%%%%%%%%%%%%%%%%%%%%%%%%%%%%%%%%%%%%%%%%%%%%%%%%%%%%%%%%%%%%%%%%%%%
\section{The Tutte polynomial of the Schreier graphs of the Hanoi Towers group}\label{sezione hanoi}

In this section, we study the Tutte polynomial of the Schreier
graphs $\{\Sigma_n\}_{n\geq 1}$ of the Hanoi Towers group
$H^{(3)}$. The strategy is still to use the self-similarity of the
graphs in order to approach recursively the problem.

\subsection{The Schreier graphs of the Hanoi Towers group}\label{sectionhanoi}

The Hanoi Towers groups $H^{(3)}$ is generated by the
automorphisms of the ternary rooted tree having the following
self-similar form:
$$
a= (01)(id,id,a) \qquad  b=(02)(id,b,id) \qquad c=(12)(c,id,id),
$$
where $(01), (02)$ and $(12)$ are elements of the symmetric group
$Sym(3)$ acting on the set $X=\{0,1,2\}$. Observe that $a,b,c$ are
involutions. The associated Schreier graphs are self-similar in
the sense of \cite{wagner2}, that is, $\Sigma_{n}$ contains three
copies of $\Sigma_{n-1}$  glued together by three edges, that we
call \emph{special edges}. Their endpoints will be called
\emph{special vertices} of $\Sigma_n$. These graphs can be
recursively constructed via the following substitutional rules
\cite{hanoi},

\begin{center}
\begin{picture}(400,115)
\letvertex A=(240,10)\letvertex B=(260,44)
\letvertex C=(280,78)\letvertex D=(300,112)
\letvertex E=(320,78)\letvertex F=(340,44)
\letvertex G=(360,10)\letvertex H=(320,10)\letvertex I=(280,10)

\letvertex L=(70,30)\letvertex M=(130,30)
\letvertex N=(100,80)

\put(236,-2){$00u$}\put(236,42){$20u$}\put(256,75){$21u$}
\put(295,116){$11u$}\put(323,75){$01u$}\put(343,42){$02u$}\put(353,-2){$22u$}
\put(315,-2){$12u$}\put(275,-2){$10u$}

\put(67,18){$0u$}\put(126,18){$2u$}\put(95,84){$1u$}\put(188,60){$\Longrightarrow$}
\put(0,60){Rule I}

\drawvertex(A){$\bullet$}\drawvertex(B){$\bullet$}
\drawvertex(C){$\bullet$}\drawvertex(D){$\bullet$}
\drawvertex(E){$\bullet$}\drawvertex(F){$\bullet$}
\drawvertex(G){$\bullet$}\drawvertex(H){$\bullet$}
\drawvertex(I){$\bullet$}
\drawundirectededge(A,B){$b$}\drawundirectededge(B,C){$a$}\drawundirectededge(C,D){$c$}
\drawundirectededge(D,E){$a$}\drawundirectededge(E,C){$b$}\drawundirectededge(E,F){$c$}\drawundirectededge(F,G){$b$}
\drawundirectededge(B,I){$c$}\drawundirectededge(H,F){$a$}\drawundirectededge(H,I){$b$}
\drawundirectededge(I,A){$a$}\drawundirectededge(G,H){$c$}

\drawvertex(L){$\bullet$}
\drawvertex(M){$\bullet$}\drawvertex(N){$\bullet$}
\drawundirectededge(M,L){$b$}\drawundirectededge(N,M){$c$}\drawundirectededge(L,N){$a$}

\end{picture}
\end{center}

\begin{center}
\begin{picture}(400,120)
\letvertex A=(240,10)\letvertex B=(260,44)
\letvertex C=(280,78)\letvertex D=(300,112)
\letvertex E=(320,78)\letvertex F=(340,44)
\letvertex G=(360,10)\letvertex H=(320,10)\letvertex I=(280,10)

\letvertex L=(70,30)\letvertex M=(130,30)
\letvertex N=(100,80)

\put(236,-2){$00u$}\put(236,42){$10u$}\put(256,75){$12u$}
\put(295,116){$22u$}\put(323,75){$02u$}\put(343,42){$01u$}\put(353,-2){$11u$}
\put(315,-2){$21u$}\put(275,-2){$20u$}

\put(67,18){$0u$}\put(126,18){$1u$}\put(95,84){$2u$}\put(188,60){$\Longrightarrow$}
\put(0,60){Rule II}
\drawvertex(A){$\bullet$}\drawvertex(B){$\bullet$}
\drawvertex(C){$\bullet$}\drawvertex(D){$\bullet$}
\drawvertex(E){$\bullet$}\drawvertex(F){$\bullet$}
\drawvertex(G){$\bullet$}\drawvertex(H){$\bullet$}
\drawvertex(I){$\bullet$}
\drawundirectededge(A,B){$a$}\drawundirectededge(B,C){$b$}\drawundirectededge(C,D){$c$}
\drawundirectededge(D,E){$b$}\drawundirectededge(E,C){$a$}\drawundirectededge(E,F){$c$}\drawundirectededge(F,G){$a$}
\drawundirectededge(B,I){$c$}\drawundirectededge(H,F){$b$}\drawundirectededge(H,I){$a$}
\drawundirectededge(I,A){$b$}\drawundirectededge(G,H){$c$}

\drawvertex(L){$\bullet$}
\drawvertex(M){$\bullet$}\drawvertex(N){$\bullet$}
\drawundirectededge(M,L){$a$}\drawundirectededge(N,M){$c$}\drawundirectededge(L,N){$b$}
\end{picture}
\end{center}
\begin{center}
\begin{picture}(400,60)
\letvertex A=(50,10)\letvertex B=(100,10)
\letvertex C=(175,10)\letvertex D=(225,10)
\letvertex E=(300,10)\letvertex F=(350,10)
\letvertex G=(50,50)\letvertex H=(100,50)
\letvertex I=(175,50)\letvertex L=(225,50)
\letvertex M=(300,50)\letvertex N=(350,50)

\put(45,54){$0u$}\put(45,-2){$0v$}
\put(95,-2){$00v$}\put(95,54){$00u$}\put(170,-2){$1v$}\put(170,54){$1u$}\put(220,-2){$11v$}
\put(220,54){$11u$}\put(295,-2){$2v$}\put(295,54){$2u$}\put(345,-2){$22v$}\put(345,54){$22u$}

\put(68,27){$\Longrightarrow$}\put(193,27){$\Longrightarrow$}\put(318,27){$\Longrightarrow$}
\put(-7,30){Rule III} \put(122,30){Rule IV} \put(252,30){Rule V}

\drawvertex(A){$\bullet$}\drawvertex(B){$\bullet$}
\drawvertex(C){$\bullet$}\drawvertex(D){$\bullet$}
\drawvertex(E){$\bullet$}\drawvertex(F){$\bullet$}
\drawvertex(G){$\bullet$}\drawvertex(H){$\bullet$}
\drawvertex(I){$\bullet$}\drawvertex(L){$\bullet$}
\drawvertex(M){$\bullet$}\drawvertex(N){$\bullet$}

\drawundirectededge(A,G){$c$}\drawundirectededge(B,H){$c$}\drawundirectededge(C,I){$b$}
\drawundirectededge(D,L){$b$}\drawundirectededge(E,M){$a$}\drawundirectededge(F,N){$a$}
\end{picture}
\end{center}
where the word $u$ in Rule I and Rule II can also be the empty
word and the words $u$ and $v$ in Rules III, IV, V can also
satisfy $u=v$ (in this case we get the three loops of $\Sigma_n$).
The starting point is the Schreier graph $\Sigma_1$ of the first
level. We also draw a picture of $\Sigma_2$.
\begin{center}
\begin{picture}(400,125)

\letvertex L=(60,10)\letvertex M=(120,10)
\letvertex N=(90,60)

\put(57,-2){$0$}\put(117,-2){$2$}\put(95,56){$1$}\put(40,60){$\Sigma_1$}

\drawvertex(L){$\bullet$}
\drawvertex(M){$\bullet$}\drawvertex(N){$\bullet$}
\drawundirectededge(M,L){$b$}\drawundirectededge(N,M){$c$}\drawundirectededge(L,N){$a$}

\drawundirectedloop[r](M){$a$}\drawundirectedloop(N){$b$}\drawundirectedloop[l](L){$c$}

\letvertex A=(200,10)\letvertex B=(220,44)
\letvertex C=(240,78)

\letvertex D=(260,112)
\letvertex E=(280,78)\letvertex F=(300,44)
\letvertex G=(320,10)\letvertex H=(280,10)\letvertex I=(240,10)

\put(197,-1){$00$}\put(205,42){$20$}\put(226,75){$21$}
\put(266,109){$11$}\put(283,75){$01$}\put(303,42){$02$}\put(310,-1){$22$}
\put(275,-1){$12$}\put(235,-1){$10$}\put(178,60){$\Sigma_2$}

\drawvertex(A){$\bullet$}\drawvertex(B){$\bullet$}
\drawvertex(C){$\bullet$} \drawvertex(D){$\bullet$}
\drawvertex(E){$\bullet$}\drawvertex(F){$\bullet$}
\drawvertex(G){$\bullet$}\drawvertex(H){$\bullet$}
\drawvertex(I){$\bullet$}
\drawundirectededge(A,B){$b$}\drawundirectededge(B,C){$a$}\drawundirectededge(C,D){$c$}
\drawundirectededge(D,E){$a$}\drawundirectededge(E,C){$b$}\drawundirectededge(E,F){$c$}\drawundirectededge(F,G){$b$}
\drawundirectededge(B,I){$c$}\drawundirectededge(H,F){$a$}\drawundirectededge(H,I){$b$}
\drawundirectededge(I,A){$a$}\drawundirectededge(G,H){$c$}

\drawundirectedloop[l](A){$c$}\drawundirectedloop(D){$b$}\drawundirectedloop[r](G){$a$}

\end{picture}
\end{center}

\begin{os}\rm
Observe that, for each $n\geq 1$, the graph $\Sigma_n$ has three
loops, centered at the outmost vertices $0^n,1^n$ and $2^n$,
labelled by $c,b$ and $a$, respectively. This is an easy
consequence of the definition of the generators $a,b$ and $c$ of
$H^{(3)}$. Moreover, these are the only loops in $\Sigma_n$.
\end{os}
\begin{os}\rm
As we already mentioned in Section \ref{sezione serpin}, the
graphs $\{\Sigma_n\}_{n\geq 1}$ are very close to the
Sierpi\'{n}ski graphs $\{\Gamma_n\}_{n\geq 1}$. Indeed, each
$\Gamma_n$ can be obtained from $\Sigma_n$ by removing loops and
contracting all the special edges of $\Sigma_n$ at each step.
\end{os}

\subsection{The Tutte polynomial of $\Sigma_n$}\label{4.2}

In this section we study the Tutte polynomial of the graph
$\Sigma_n$, considered without its loops. However, the presence of
the three loops would change the polynomial only by the factor
$y^3$ (see Definition \ref{contracting}). Moreover, for our
purposes, we can forget the generator labelling of the edges of
$\Sigma_n$, as well as the word labelling of its vertices, and we
can regard it as an unlabelled graph. We represent the
self-similar structure of $\Sigma_n$ by the following picture.
\begin{center}
\begin{picture}(400,85)
\letvertex A=(200,75)
\letvertex B=(185,50) \letvertex F=(215,50)
\letvertex C=(170,25)  \letvertex E=(230,25)
\letvertex D=(155,0)  \letvertex G=(245,0)
\letvertex N=(185,0)  \letvertex L=(215,0)

\put(193,82){$v_{1,1}$}\put(160,50){$v_{1,2}$}\put(220,50){$v_{1,3}$}
\put(147,26){$v_{2,1}$}\put(234,26){$v_{3,1}$}\put(140,-8){$v_{2,2}$}
\put(175,-8){$v_{2,3}$}\put(205,-8){$v_{3,2}$}\put(235,-8){$v_{3,3}$}

%\put(177,-13){$P_3$}\put(210,-13){$P_4$}\put(235,22){$P_5$}\put(220,47){$P_6$}\put(168,47){$P_1$}\put(152,22){$P_2$}

\put(193,55){$G_1$}\put(163,5){$G_2$}\put(223,5){$G_3$}

\drawundirectededge(D,G){}
\drawundirectededge(A,D){} \drawundirectededge(A,G){}
   \drawundirectededge(L,E){}
\drawundirectededge(B,F){} \drawundirectededge(C,N){}

\drawvertex(A){$\bullet$}\drawvertex(B){$\bullet$}
\drawvertex(C){$\bullet$}
 \drawvertex(D){$\bullet$}
\drawvertex(E){$\bullet$}\drawvertex(F){$\bullet$}
\drawvertex(G){$\bullet$}
\drawvertex(N){$\bullet$}\drawvertex(L){$\bullet$}
\end{picture}
\end{center}
\vspace{0.5cm} More precisely, the graph $\Sigma_n$ is the union
of three copies $G_1, G_2$ and $G_3$ of $\Sigma_{n-1}$, joint by
the special edges. For each $i=1,2,3$, we denote by $v_{i,j}$,
with $j=1,2,3$, the upmost, the leftmost and the rightmost vertex
of $G_i$, respectively. Moreover, it is not difficult to prove by
induction the following equalities:
$$
|V(\Sigma_n)| =  3^n \qquad \qquad
|E(\Sigma_n)|=\frac{3^{n+1}-3}{2}.
$$
Note that for $n=1$, $\Sigma_1=\Gamma_1$, and so everything is
already known. As for the Sierpi\'{n}ski graphs, we introduce the
following partition of the set of the spanning subgraphs of
$\Sigma_n$:
\begin{itemize}
\item $F_{2,n}$ denotes the set of spanning subgraphs of  $\Sigma_n$,
where the three outmost vertices belong to the same connected
component;
\item $F_{1,n}^u$ denotes the set of spanning subgraphs of $\Sigma_n$, where the leftmost and rightmost
vertices belong to the same connected component, and the upmost
one belongs to a different connected component. Similarly, by
rotation, $F_{1,n}^r$ (respectively $F_{1,n}^l$) denotes the set
of spanning subgraphs of $\Sigma_n$, where the rightmost
(respectively leftmost) vertex is not in the same connected
component containing the two other outmost vertices;
\item $F_{0,n}$ denotes the set of spanning subgraphs of $\Sigma_n$, where the three outmost
vertices belong to three different connected components.
\end{itemize}
As in Section \ref{sezione serpin}, to draw a subgraph of
$\Sigma_n$ of the previous types, we will use the following
notation.
\begin{center}
\begin{picture}(400,40)
\letvertex A=(25,35)\letvertex B=(10,10)
\letvertex C=(40,10)

\letvertex a=(105,35)\letvertex b=(90,10)
\letvertex c=(120,10)

\letvertex aa=(185,35)\letvertex bb=(170,10)
\letvertex cc=(200,10)

\letvertex aaa=(265,35)\letvertex bbb=(250,10)
\letvertex ccc=(280,10)

\letvertex AAA=(345,35)\letvertex BBB=(330,10)
\letvertex CCC=(360,10)

\dashline[0]{4}(90,10)(105,35)
\dashline[0]{4}(120,10)(105,35)

\dashline[0]{4}(200,10)(185,35) \dashline[0]{4}(200,10)(170,10)

\dashline[0]{4}(250,10)(265,35) \dashline[0]{4}(250,10)(280,10)

\dashline[0]{4}(330,10)(345,35)
\dashline[0]{4}(360,10)(345,35)
\dashline[0]{4}(330,10)(360,10)

\put(15,-5){$F_{2,n}$}\put(95,-5){$F_{1,n}^u$}\put(175,-5)
{$F_{1,n}^r$}\put(255,-5){$F_{1,n}^l$}\put(335,-5){$F_{0,n}$}

\drawundirectededge(A,B){} \drawundirectededge(B,C){}
\drawundirectededge(C,A){}

\drawundirectededge(aa,bb){}

\drawundirectededge(aaa,ccc){}

%\drawundirectededge(a,b){}\drawundirectededge(c,a){}

\drawundirectededge(b,c){}

%\drawundirectededge(AA,BB){} \drawundirectededge(BB,CC){}
%\drawundirectededge(CC,AA){}

\drawvertex(a){$\bullet$}\drawvertex(b){$\bullet$}
\drawvertex(c){$\bullet$}
\drawvertex(A){$\bullet$}\drawvertex(B){$\bullet$}
\drawvertex(C){$\bullet$}
\drawvertex(AAA){$\bullet$}\drawvertex(BBB){$\bullet$}
\drawvertex(CCC){$\bullet$}

\drawvertex(aa){$\bullet$}\drawvertex(bb){$\bullet$}
\drawvertex(cc){$\bullet$}
\drawvertex(aaa){$\bullet$}\drawvertex(bbb){$\bullet$}
\drawvertex(ccc){$\bullet$}
\end{picture}
\end{center}
Observe that
$$
F_{2,n} \sqcup F_{1,n}^u\sqcup
F_{1,n}^r\sqcup F_{1,n}^l\sqcup F_{0,n}
$$
is a partition of the set of spanning subgraphs of $\Sigma_n$, for
each $n\geq 1$. Next, let us simply denote by $H_n(x,y)$ the Tutte
polynomial $T(\Sigma_n;x,y)$ of $\Sigma_n$ and define, for every
$n\geq 1$, the following polynomials:
\begin{itemize}
\item $\displaystyle H_{2,n}(x,y)= \sum_{A\in F_{2,n}}(x-1)^{r(\Sigma_n)-r(A)}(y-1)^{n(A)}$;
\item $\displaystyle H_{1,n}^u(x,y)= \sum_{A\in F_{1,n}^u}(x-1)^{r(\Sigma_n)-r(A)}(y-1)^{n(A)}$;
%\item $\displaystyle H_{1,n}^r(x,y)= \sum_{A\in F_{1,n}^r}(x-1)^{r(\Sigma_n)-r(A)}(y-1)^{n(A)}$;
%\item $\displaystyle H_{1,n}^l(x,y)= \sum_{A\in F_{1,n}^l}(x-1)^{r(\Sigma_n)-r(A)}(y-1)^{n(A)}$;
\item $\displaystyle H_{0,n}(x,y)= \sum_{A\in F_{0,n}}(x-1)^{r(\Sigma_n)-r(A)}(y-1)^{n(A)}$.
\end{itemize}
Similarly, we define $H_{1,n}^r(x,y)$ and $H_{1,n}^l(x,y)$, by
taking sums over $F_{1,n}^r$ and $F_{1,n}^l$, respectively. Note
that, by the rotational-invariance of the graph $\Sigma_n$, one
has
$$
H_{1,n}^u(x,y) =  H_{1,n}^r(x,y) = H_{1,n}^l(x,y),
$$
so that we can simply use the notation $H_{1,n}(x,y)$ to denote
one of these three polynomials. According with Definition
\ref{defspanning} of the Tutte polynomial, we have:
$$
H_n(x,y) = H_{2,n}(x,y) + 3H_{1,n}(x,y) + H_{0,n}(x,y).
$$
Also in this case, we give a recursive formula for $H_n(x,y)$,
providing recursive formulas for $H_{2,n}(x,y), H_{1,n}(x,y)$ and
$H_{0,n}(x,y)$ (Theorem \ref{noname}). The main difference with
respect to the case of the Sierpi\'{n}ski graphs is that, now, a
spanning subgraph $A$ of $\Sigma_{n+1}$ is not determined by its
restrictions $A_1,A_2$ and $A_3$ to the three copies $G_1, G_2$
and $G_3$ of $\Sigma_{n}$. In fact, the three special edges do not
belong to any of the copies of the $\Sigma_{n}$. Therefore, in
this case, we need to specify how many special edges belong to the
subgraph $A$ of $\Sigma_{n+1}$. Once we fix them, then we have the
same correspondence as before, i.e., a spanning subgraph $A$ in
$\Sigma_{n+1}$ is determined by the special edges that it contains
and by its restrictions to the three copies $G_1, G_2$ and $G_3$
of $\Sigma_{n}$.

Therefore, Equation \eqref{defsubgraphs} of Definition
\ref{defspanning} can be rewritten as
$$
H_{n+1}(x,y) = \sum_{A_i\subseteq G_i, i=1,2,3
}(x-1)^{r(\Sigma_{n+1})-r(A)}(y-1)^{n(A)}.
$$
Firstly, observe that $r(\Sigma_{n+1})=3r(\Sigma_{n})+2$ and
$|V(A)|=|V(A_1)|+|V(A_2)|+|V(A_3)|$, for every spanning subgraph
$A$ of $\Sigma_{n+1}$. Next, we have to understand how $r(A)$ and
$n(A)$ depend on $r(A_i)$ and $n(A_i)$, for $i=1,2,3$. Note that
the number of special edges belonging to $A$ plays a crucial role.
Moreover, we still have to separately consider the case in which
the special vertices belongs to the same connected components:
this can only happen when all the special edges belong to $A$.

\emph{Case I: All the special edges are in $A$}.

\begin{center}
\begin{picture}(400,80)
\letvertex A=(200,75)
\letvertex B=(185,50) \letvertex F=(215,50)
\letvertex C=(170,25)  \letvertex E=(230,25)
\letvertex D=(155,0)  \letvertex G=(245,0)
\letvertex N=(185,0)  \letvertex L=(215,0)

\put(193,55){$A_1$}\put(163,5){$A_2$}\put(223,5){$A_3$}

\drawundirectededge(D,G){}
\drawundirectededge(A,D){} \drawundirectededge(A,G){}
   \drawundirectededge(L,E){}
\drawundirectededge(B,F){} \drawundirectededge(C,N){}

\drawvertex(A){$\bullet$}\drawvertex(B){$\bullet$}
\drawvertex(C){$\bullet$}
 \drawvertex(D){$\bullet$}
\drawvertex(E){$\bullet$}\drawvertex(F){$\bullet$}
\drawvertex(G){$\bullet$}
\drawvertex(N){$\bullet$}\drawvertex(L){$\bullet$}
\end{picture}
\end{center}

This case is analogous to the case of the Sierpi\'{n}ski graphs.
If in the spanning subgraph $A$, obtained by the union of the
special edges and $A_1,A_2$ and $A_3$, the special vertices are in
the same connected component and, for each $i=1,2,3$, the vertices
$v_{i,j}$ and $v_{i,k}$ are in the same connected component of
$A_i$, for $j,k\neq i$, then  it is easy to check that
$$
k(A) = k(A_1) +k(A_2) + k(A_3) -2 \qquad \mbox{and} \qquad r(A) =
r(A_1) + r(A_2) + r(A_3) +2.
$$
Moreover, one has
\begin{eqnarray*}
n(A)&=&(|E(A_1)|+|E(A_2)|+|E(A_3)|+3)-(|V(A_1)|+|V(A_2)|+|V(A_3)|)\\&+&
(k(A_1) +k(A_2) + k(A_3) -2)\\
&=& n(A_1) + n(A_2) + n(A_3) + 1.
\end{eqnarray*}
Hence, for such a spanning subgraph $A$ of $\Sigma_{n+1}$ (of
\lq\lq first type\rq\rq), one gets:
$$
r(\Sigma_{n+1})-r(A) = \sum_{i=1}^3 (r(\Sigma_{n})-r(A_i)) \qquad
\mbox { and } \qquad n(A)=n(A_1) + n(A_2) + n(A_3) + 1
$$
and so
$$
(x-1)^{r(\Sigma_{n+1})-r(A)}(y-1)^{n(A)}=(y-1) \prod_{i=1}^3
(x-1)^{r(\Sigma_{n})-r(A_i)}(y-1)^{n(A_i)}.
$$
If in the spanning subgraph $A$, obtained by the union of the
special edges with $A_1,A_2$ and $A_3$, the vertices $v_{i,j}$ and
$v_{i,k}$ in $A_i$ do not belong to the same connected component
of $A_i$, for some $i=1,2,3$ and $j,k\neq i$, then it is easy to
verify that
$$
k(A) = k(A_1) +k(A_2) + k(A_3) -3 \qquad \mbox{and} \qquad r(A) =
r(A_1) + r(A_2) + r(A_3)+3.
$$
Therefore, one has in this case
\begin{eqnarray*}
n(A)
&=&(|E(A_1)|+|E(A_2)|+|E(A_3)|+3)-(|V(A_1)|+|V(A_2)|+|V(A_3)|)\\&+&
(k(A_1) +k(A_2) + k(A_3) -3)\\ &=& n(A_1) + n(A_2) + n(A_3).
\end{eqnarray*}
Hence, for such a spanning subgraph $A$ of $\Sigma_{n+1}$ (of
\lq\lq second type\rq\rq), one gets:
$$
r(\Sigma_{n+1})-r(A) = \sum_{i=1}^3 (r(\Sigma_{n})-r(A_i))-1
\qquad \mbox { and } \qquad n(A)=n(A_1) + n(A_2) + n(A_3)
$$
and so
$$
(x-1)^{r(\Sigma_{n+1})-r(A)}(y-1)^{n(A)}=   \frac{1}{(x-1)}
\prod_{i=1}^3   (x-1)^{r(\Sigma_{n})-r(A_i)}(y-1)^{n(A_i)}.
$$

\emph{Case II: Only two  special edges are in $A$}.

\begin{center}
\begin{picture}(400,80)
\letvertex A=(200,75)
\letvertex B=(185,50) \letvertex F=(215,50)
\letvertex C=(170,25)  \letvertex E=(230,25)
\letvertex D=(155,0)  \letvertex G=(245,0)
\letvertex N=(185,0)  \letvertex L=(215,0)

\put(193,55){$A_1$}\put(163,5){$A_2$}\put(223,5){$A_3$}

\drawundirectededge(D,N){} \drawundirectededge(L,G){}
\drawundirectededge(A,D){} \drawundirectededge(A,G){}
   \drawundirectededge(L,E){}
\drawundirectededge(B,F){} \drawundirectededge(C,N){}

\drawvertex(A){$\bullet$}\drawvertex(B){$\bullet$}
\drawvertex(C){$\bullet$}
 \drawvertex(D){$\bullet$}
\drawvertex(E){$\bullet$}\drawvertex(F){$\bullet$}
\drawvertex(G){$\bullet$}
\drawvertex(N){$\bullet$}\drawvertex(L){$\bullet$}

\dashline[0]{4}(185,0) (215,0)
\end{picture}
\end{center}

In this case
$$
k(A) = k(A_1) +k(A_2) + k(A_3) -2 \qquad \mbox{and} \qquad r(A) =
r(A_1) + r(A_2) + r(A_3)+2.
$$
One has in this case
\begin{eqnarray*}
n(A)
&=&(|E(A_1)|+|E(A_2)|+|E(A_3)|+2)-(|V(A_1)|+|V(A_2)|+|V(A_3)|)\\&+&
(k(A_1) +k(A_2) + k(A_3) -2)\\ &=& n(A_1) + n(A_2) + n(A_3).
\end{eqnarray*}
Hence, for such a spanning subgraph $A$ of $\Sigma_{n+1}$ (of
\lq\lq third type\rq\rq), one gets:
$$
r(\Sigma_{n+1})-r(A) = \sum_{i=1}^3 (r(\Sigma_{n})-r(A_i)) \qquad
\mbox { and } \qquad n(A)=n(A_1) + n(A_2) + n(A_3)
$$
and so
$$
(x-1)^{r(\Sigma_{n+1})-r(A)}(y-1)^{n(A)}= \prod_{i=1}^3
(x-1)^{r(\Sigma_{n})-r(A_i)}(y-1)^{n(A_i)}.
$$
\newpage
\emph{Case III: Only one special edge is in $A$}.

\begin{center}
\begin{picture}(400,80)
\letvertex A=(200,75)
\letvertex B=(185,50) \letvertex F=(215,50)
\letvertex C=(170,25)  \letvertex E=(230,25)
\letvertex D=(155,0)  \letvertex G=(245,0)
\letvertex N=(185,0)  \letvertex L=(215,0)
\put(193,55){$A_1$}\put(163,5){$A_2$}\put(223,5){$A_3$}

\dashline[0]{4}(185,50)(170,25)

\dashline[0]{4}(215,50)(230,25)

\drawundirectededge(A,B){} \drawundirectededge(C,D){}
\drawundirectededge(D,N){} \drawundirectededge(L,G){}
\drawundirectededge(A,F){} \drawundirectededge(L,E){}
\drawundirectededge(B,F){} \drawundirectededge(C,N){}
\drawundirectededge(L,N){}\drawundirectededge(E,G){}
\drawvertex(A){$\bullet$}\drawvertex(B){$\bullet$}
\drawvertex(C){$\bullet$} \drawvertex(D){$\bullet$}
\drawvertex(E){$\bullet$}\drawvertex(F){$\bullet$}
\drawvertex(G){$\bullet$}
\drawvertex(N){$\bullet$}\drawvertex(L){$\bullet$}
\end{picture}
\end{center}

In this case
$$
k(A) = k(A_1) +k(A_2) + k(A_3) -1\qquad \mbox{and} \qquad r(A) =
r(A_1) + r(A_2) + r(A_3)+1.
$$
Moreover, one has in this case
\begin{eqnarray*}
n(A)
&=&(|E(A_1)|+|E(A_2)|+|E(A_3)|+1)-(|V(A_1)|+|V(A_2)|+|V(A_3)|)\\&+&
(k(A_1) +k(A_2) + k(A_3) -1)\\ &=& n(A_1) + n(A_2) + n(A_3).
\end{eqnarray*}
Hence, for such a spanning subgraph $A$ of $\Sigma_{n+1}$ (of
\lq\lq fourth type\rq\rq), one gets:
$$
r(\Sigma_{n+1})-r(A) = \sum_{i=1}^3 (r(\Sigma_{n})-r(A_i))+1
\qquad \mbox { and } \qquad n(A)=n(A_1) + n(A_2) + n(A_3)
$$
and so
$$
(x-1)^{r(\Sigma_{n+1})-r(A)}(y-1)^{n(A)}= (x-1) \prod_{i=1}^3
(x-1)^{r(\Sigma_{n})-r(A_i)}(y-1)^{n(A_i)}.
$$
{\emph{Case IV: No special edge is in $A$}}.

\begin{center}
\begin{picture}(400,80)
\letvertex A=(200,75)
\letvertex B=(185,50) \letvertex F=(215,50)
\letvertex C=(170,25)  \letvertex E=(230,25)
\letvertex D=(155,0)  \letvertex G=(245,0)
\letvertex N=(185,0)  \letvertex L=(215,0)

\dashline[0]{4}(185,50)(170,25)

\dashline[0]{4}(215,50)(230,25)

\dashline[0]{4}(185,0)(215,0)

\put(193,55){$A_1$}\put(163,5){$A_2$}\put(223,5){$A_3$}

\drawundirectededge(A,B){} \drawundirectededge(C,D){}
\drawundirectededge(D,N){} \drawundirectededge(L,G){}
\drawundirectededge(A,F){}\drawundirectededge(E,G){}

   \drawundirectededge(L,E){}
\drawundirectededge(B,F){} \drawundirectededge(C,N){}

\drawvertex(A){$\bullet$}\drawvertex(B){$\bullet$}
\drawvertex(C){$\bullet$}
 \drawvertex(D){$\bullet$}
\drawvertex(E){$\bullet$}\drawvertex(F){$\bullet$}
\drawvertex(G){$\bullet$}
\drawvertex(N){$\bullet$}\drawvertex(L){$\bullet$}

\end{picture}
\end{center}

In this case
$$
k(A) = k(A_1) +k(A_2) + k(A_3)\qquad \mbox{and} \qquad r(A) =
r(A_1) + r(A_2) + r(A_3).
$$
Moreover, one has in this case
\begin{eqnarray*}
n(A)
&=&(|E(A_1)|+|E(A_2)|+|E(A_3)|)-(|V(A_1)|+|V(A_2)|+|V(A_3)|)\\&+&
(k(A_1) +k(A_2) + k(A_3))\\ &=& n(A_1) + n(A_2) + n(A_3).
\end{eqnarray*}
Hence, for such a spanning subgraph $A$ of $\Sigma_{n+1}$ (of
\lq\lq fifth type\rq\rq), one gets:
$$
r(\Sigma_{n+1})-r(A) = \sum_{i=1}^3 (r(\Sigma_{n})-r(A_i))+2
\qquad \mbox { and } \qquad n(A)=n(A_1) + n(A_2) + n(A_3)
$$
and so
$$
(x-1)^{r(\Sigma_{n+1})-r(A)}(y-1)^{n(A)}= (x-1)^2\prod_{i=1}^3
(x-1)^{r(\Sigma_{n})-r(A_i)}(y-1)^{n(A_i)}.
$$

\begin{teo}\label{noname}
For each $n\geq 1$, the Tutte polynomial $H_n(x,y)$ of $\Sigma_n$
is given by
$$
H_n(x,y)=H_{2,n}(x,y)+3H_{1,n}(x,y)+H_{0,n}(x,y),
$$
where the polynomials $H_{2,n}(x,y)$, $H_{1,n}(x,y)$,
$H_{0,n}(x,y) \in \mathbb{Z}[x,y]$ satisfy the following recursive
relations:
\begin{eqnarray}\label{h2}
H_{2,n+1}(x,y)&=& (y-1)H_{2,n}^3 +
\frac{1}{x-1}\left(6H_{2,n}^2H_{1,n}+3H_{2,n}H_{1,n}^2\right) \\
&+&3H_{2,n}^3+6H_{2,n}^2H_{1,n}+3H_{2,n}H_{1,n}^2.\nonumber
\end{eqnarray}
\begin{eqnarray}\label{h1}
H_{1,n+1}(x,y)&=&
(y-1)H_{2,n}^2H_{1,n}\\&+&\frac{1}{x-1}\left(H_{2,n}^2H_{0,n}+7H_{2,n}H_{1,n}^2+2H_{2,n}H_{1,n}H_{0,n}+4H_{1,n}^3+H_{1,n}^2H_{0,n}\right)\nonumber\\
&+&7H_{2,n}^2H_{1,n}+2H_{2,n}^2H_{0,n}+14H_{2,n}H_{1,n}^2+4H_{2,n}H_{1,n}H_{0,n}\nonumber\\
&+&7H_{1,n}^3+2H_{1,n}^2H_{0,n}+
(x-1)\left(H_{2,n}^3+5H_{2,n}^2H_{1,n}+H_{2,n}^2H_{0,n}\right.\nonumber\\&+&
\left.7H_{2,n}H_{1,n}^2+2H_{2,n}H_{1,n}H_{0,n}+3H_{1,n}^3+H_{1,n}^2H_{0,n}\right)\nonumber
\end{eqnarray}
\begin{eqnarray}\label{h0}
H_{0,n+1}(x,y)&=& (y-1)\left(3H_{2,n}H_{1,n}^2+H_{1,n}^3\right)\\
&+&
\frac{1}{x-1}\left(12H_{2,n}H_{1,n}H_{0,n}+3H_{2,n}H_{0,n}^2+14H_{1,n}^3+24H_{1,n}^2H_{0,n}\right.\nonumber\\
&+&\left.9H_{1,n}H_{0,n}^2+H_{0,n}^3\right)+
3H_{2,n}^2H_{0,n}+36H_{2,n}H_{1,n}^2+42H_{2,n}H_{1,n}H_{0,n}\nonumber\\
&+&9H_{2,n}H_{0,n}^2+60H_{1,n}^3+75H_{1,n}^2H_{0,n}+
27H_{1,n}H_{0,n}^2 +3H_{0,n}^3\nonumber\\&+&
(x\!-\!1)\left(12H_{2,n}^2H_{1,n}+6H_{2,n}^2H_{0,n}+60H_{2,n}H_{1,n}^2+48H_{2,n}H_{1,n}H_{0,n}\right.\nonumber\\
&+&\left.9H_{2,n}H_{0,n}^2+72H_{1,n}^3+
78H_{1,n}^2H_{0,n}+27H_{1,n}H_{0,n}^2+3H_{0,n}^3\right)\nonumber\\
&+&
(x\!-\!1)^2\!\!\left(H_{2,n}^3\!+\!9H_{2,n}^2H_{1,n}\!+\!3H_{2,n}^2H_{0,n}\!+\!27H_{2,n}H_{1,n}^2\!+\!\!18H_{2,n}H_{1,n}H_{0,n}\right.\nonumber\\
&+&\left.3H_{2,n}H_{0,n}^2
+27H_{1,n}^3+27H_{1,n}^2H_{0,n}+9H_{1,n}H_{0,n}^2+H_{0,n}^3\right),\nonumber
\end{eqnarray}
with initial conditions
$$
H_{2,1}(x,y)=y+2 \qquad H_{1,1}(x,y)=x-1 \qquad H_{0,1}(x,y)=(x-1)^2.
$$
\end{teo}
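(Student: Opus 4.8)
The plan is to repeat the enumeration used for Theorem~\ref{ricorsivesierpinski}, now exploiting the bijection between spanning subgraphs of $\Sigma_{n+1}$ and triples $(A_1,A_2,A_3)$ of spanning subgraphs of the copies $G_1,G_2,G_3\cong\Sigma_n$ \emph{together with} a choice of which of the three special edges are retained. The initial conditions are immediate, since $\Sigma_1=\Gamma_1$. For the recursion I would split the sum \eqref{defsubgraphs} for $\Sigma_{n+1}$ according to the five types already isolated in Cases~I--IV preceding the statement, each contributing a fixed prefactor: $(y-1)$ for the first type (all three special edges present and the special vertices connected), $\tfrac{1}{x-1}$ for the second type (all three special edges, special vertices separated), $1$ for the third type (two special edges), $(x-1)$ for the fourth type (one special edge) and $(x-1)^2$ for the fifth type (no special edge). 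This at once explains the layered shape of \eqref{h2}--\eqref{h0}: one or zero special edges cannot join all three free corners, so $H_{2,n+1}$ carries only $(y-1)$-, $\tfrac{1}{x-1}$- and constant terms; $H_{1,n+1}$ picks up in addition an $(x-1)$-term but no $(x-1)^2$-term; and $H_{0,n+1}$ contains all five layers.

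Within each type I would run over the connectivity class of each restriction $A_i$ --- namely $F_{2,n}$, the three variants of $F_{1,n}$, and $F_{0,n}$ --- recording for every admissible configuration both the resulting partition of the three free corners $f_1,f_2,f_3$ of $\Sigma_{n+1}$ (which decides whether the term belongs to $H_{2,n+1}$, $H_{1,n+1}$ or $H_{0,n+1}$) and the number of rotated or reflected copies of that configuration (which supplies the integer coefficients). The delicate point is that, inside the class $F_{1,n}$, one must distinguish whether the free corner $f_i$ is the isolated outmost vertex or is paired with one of the two special corners, since only in the latter case does a special edge incident to that copy propagate the component of $f_i$ to a neighbour; this is exactly what the individual diagrams encode, and is why the identity $H_{1,n}^u=H_{1,n}^r=H_{1,n}^l$ may be invoked only after the position of the free corner has been fixed in each picture.

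Two observations streamline the work and serve as checks. In the fifth type no special edge is present, so $A=A_1\sqcup A_2\sqcup A_3$, every triple is admissible, each $f_i$ lies in a distinct component, and the prefactor is $(x-1)^2$; hence the entire fifth-type contribution is $(x-1)^2\bigl(H_{2,n}+3H_{1,n}+H_{0,n}\bigr)^3=(x-1)^2H_n^3$, which one checks equals the $(x-1)^2$-bracket of \eqref{h0} and confirms that the fifth type feeds only $H_{0,n+1}$. Moreover, the first and second types (all special edges present) are combinatorially identical to the Sierpi\'{n}ski gluing in which the special vertices are identified; accordingly the $(y-1)$- and $\tfrac{1}{x-1}$-parts of \eqref{h2}--\eqref{h0} must coincide term for term with the recursions \eqref{t2n}--\eqref{t0n} after replacing $T$ by $H$, and indeed they do.

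I expect the main obstacle to be the exhaustive and error-prone symmetry count in the third and fourth types, where only two or only one of the three special edges are retained: here a single monomial $H_{i,n}H_{j,n}H_{k,n}$ may arise from several geometrically distinct placements of the retained special edges and of the distinguished free corner, producing the comparatively large coefficients (such as $36,42,60,75$ in \eqref{h0}). Summing the contributions of all five types and collecting them by target class then yields the three stated recursions; beyond the two checks above, consistency can be further tested against the special evaluations computed in the subsequent propositions (for instance $H_n(2,2)=2^{|E(\Sigma_n)|}$).
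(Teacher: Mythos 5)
Your proposal follows exactly the strategy of the paper's own (very brief) proof of this theorem: repeat the configuration count of Theorem \ref{ricorsivesierpinski}, now stratified by the number of retained special edges, whose cases supply the prefactors $(y-1)$, $\frac{1}{x-1}$, $1$, $(x-1)$ and $(x-1)^2$. Your two consistency checks --- that the no-special-edge stratum contributes $(x-1)^2H_n^3$ entirely to $H_{0,n+1}$, and that the all-special-edge strata reproduce \eqref{t2n}--\eqref{t0n} with $T$ replaced by $H$ --- are both correct and indeed supply more detail than the paper itself records.
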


\begin{proof}
The proof follows the same strategy as in Theorem
\ref{ricorsivesierpinski}. Observe that in this case we have
different powers of $(x-1)$ occurring, due to the different
possible number of special edges belonging to a spanning subgraph.
\end{proof}

For each $n\geq 1$, let us call $I_n$ the graph obtained by
$\Sigma_n$ by contracting only the special edges joining the three
copies of $\Sigma_{n-1}$, so that $I_n$ has the following
structure.

\begin{center}\unitlength=0.25mm
\begin{picture}(400,110)
\put(100,70){$I_n$}
\letvertex D=(200,110)\letvertex E=(170,60)\letvertex F=(140,10)\letvertex G=(200,10)
\letvertex H=(260,10)\letvertex I=(230,60)

\put(186,70){$\Sigma_{n-1}$}\put(155,20){$\Sigma_{n-1}$}\put(215,20){$\Sigma_{n-1}$}

\drawvertex(D){$\bullet$}
\drawvertex(E){$\bullet$}\drawvertex(F){$\bullet$}
\drawvertex(G){$\bullet$}\drawvertex(H){$\bullet$}
\drawvertex(I){$\bullet$}

\drawundirectededge(E,D){} \drawundirectededge(F,E){}
\drawundirectededge(G,F){} \drawundirectededge(H,G){}
\drawundirectededge(I,H){} \drawundirectededge(D,I){}
\drawundirectededge(I,E){} \drawundirectededge(E,G){}
\drawundirectededge(G,I){}
\end{picture}
\end{center}
In other words, the graph $I_n$ can be regarded as a
Sierpi\'{n}ski graph $\Gamma_n$, where each subgraph $G_1,G_2,G_3$
is isomorphic to $\Sigma_{n-1}$ and not to the graph
$\Gamma_{n-1}$.

The following proposition establishes a relationship between the
Tutte polynomial of $\Sigma_n$ and the Tutte polynomial of the
Sierpi\'{n}ski graph $\Gamma_n$, via the introduction of the Tutte
polynomial of $I_n$. More precisely, the following result holds.

\begin{prop}\label{sierp-hanoi}
For each $n\geq 1$, one has
$$
H_{n+1}(x,y) = (x^2+x+1)H_{n}^3(x,y) + T(I_{n+1};x,y).
$$
\end{prop}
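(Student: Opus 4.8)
The plan is to expand $H_{n+1}(x,y)$ directly through the spanning subgraph definition \eqref{defsubgraphs} and to organize the sum according to \emph{which} of the three special edges a spanning subgraph $A$ of $\Sigma_{n+1}$ contains, exactly along the case analysis (Cases I--IV) carried out just before Theorem \ref{noname}. Once a subset of special edges is fixed, $A$ is determined by its restrictions $A_1,A_2,A_3$ to the three copies $G_1,G_2,G_3$ of $\Sigma_n$. Writing $P_i=(x-1)^{r(\Sigma_n)-r(A_i)}(y-1)^{n(A_i)}$, the identity I would keep in reserve and apply case by case is the factorization
\[
\sum_{A_1,A_2,A_3}P_1P_2P_3=\Big(\sum_{A_i\subseteq G_i}P_i\Big)^3=H_n(x,y)^3,
\]
which holds because each $G_i\cong\Sigma_n$.

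First I would dispose of the configurations with fewer than three special edges. By the computations of Cases II, III and IV, a subgraph of \lq\lq third\rq\rq, \lq\lq fourth\rq\rq{} or \lq\lq fifth\rq\rq{} type contributes respectively $P_1P_2P_3$, $(x-1)P_1P_2P_3$ and $(x-1)^2P_1P_2P_3$, and these powers of $(x-1)$ are uniform (no first/second-type split occurs, since two or fewer special edges form no cycle among the copies). Counting the $\binom{3}{2}=3$ choices of two special edges, the $\binom{3}{1}=3$ choices of one, the single choice of none, and summing over all triples via the factorization above, these cases contribute
\[
\big(3+3(x-1)+(x-1)^2\big)H_n^3=(x^2+x+1)\,H_n^3.
\]

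It remains to identify the Case I contribution (all three special edges present) with $T(I_{n+1};x,y)$, which I expect to be the main point. The key observation is that contracting the three special edges in a Case I subgraph yields precisely a spanning subgraph of $I_{n+1}$, and that this is a bijection between Case I subgraphs and all spanning subgraphs of $I_{n+1}$. Since $I_{n+1}$ has the same gluing structure as a Sierpi\'{n}ski graph (three copies of $\Sigma_n$ joined at three special vertices), its spanning-subgraph expansion splits into a \lq\lq first type\rq\rq{} and a \lq\lq second type\rq\rq{} exactly as for $\Gamma_{n+1}$ in Theorem \ref{ricorsivesierpinski}. I would then verify two matchings: (i) a triple $(A_1,A_2,A_3)$ is of first type (all special vertices in one component) in the Case I subgraph of $\Sigma_{n+1}$ if and only if it is of first type in $I_{n+1}$, because a \emph{present} special edge and a \emph{contracted} one impose the same identifications among the components of the $A_i$; and (ii) the rank--nullity contributions coincide, namely first type gives $(y-1)P_1P_2P_3$ and second type $\frac{1}{x-1}P_1P_2P_3$ in both graphs. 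Step (ii) follows by comparing $r(\Sigma_{n+1})=3r(\Sigma_n)+2$ with $r(I_{n+1})=3r(\Sigma_n)-1$ together with the rank and nullity formulas recorded in Case I: for instance, in the first type one has $r(\Sigma_{n+1})-r(A)=\sum_i(r(\Sigma_n)-r(A_i))$ on the $\Sigma_{n+1}$ side and the same value on the $I_{n+1}$ side. Granting (i)--(ii), the Case I contribution equals $T(I_{n+1};x,y)$.

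Finally I would add the four contributions to obtain
\[
H_{n+1}(x,y)=(x^2+x+1)H_n^3(x,y)+T(I_{n+1};x,y),
\]
as claimed. The delicate part is the bookkeeping in (i)--(ii): one must check that the first/second-type dichotomy and the exact powers of $(x-1)$ transfer correctly between the graph with the special edges present and the graph with them contracted. Everything else reduces to the factorization and to the multiplicities $3,3,1$ of the lower cases.
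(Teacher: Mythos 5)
Your proof is correct, but it takes a genuinely different route from the paper's. The paper proves the identity by applying deletion--contraction (Definition \ref{contracting}) successively to the three special edges of $\Sigma_{n+1}$: in the branch where a special edge is deleted, the remaining uncontracted special edges become bridges and the contracted ones become one-point joins, so the bridge rule together with Property \eqref{prodotto} evaluates the three ``deletion'' branches as $x^2H_n^3$, $xH_n^3$ and $H_n^3$, while the branch in which all three special edges are contracted is precisely $I_{n+1}$. You instead work directly with the spanning-subgraph expansion, partition the subgraphs of $\Sigma_{n+1}$ by the set of special edges they contain, and reuse the rank/nullity bookkeeping of Cases I--IV; the identity $3+3(x-1)+(x-1)^2=x^2+x+1$ plays the role of the three deletion branches, and your weight-preserving contraction bijection for Case I plays the role of the contraction branch. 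The checks you flag as delicate do go through: since the three copies $G_1,G_2,G_3$ are vertex-disjoint in $\Sigma_{n+1}$, at most two special edges can never close a cycle among the copies, so the first/second-type dichotomy genuinely arises only in Case I; and contracting the three present special edges preserves $n(A)$ and $r(G)-r(A)$ because $r(I_{n+1})=3r(\Sigma_n)-1=r(\Sigma_{n+1})-3$ while $r(A)$ also drops by exactly $3$. The paper's argument is shorter and needs essentially no computation; yours is heavier but recycles the machinery already set up for Theorem \ref{noname} and, as a bonus, establishes the content of the remark following the proposition, namely that $T(I_{n+1};x,y)$ collects exactly the terms of $H_{n+1}$ carrying a factor $(y-1)$ or $\frac{1}{x-1}$, i.e.\ those coming from subgraphs containing all three special edges.
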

\begin{proof}
We prove the assertion by using Property \eqref{prodotto} and the
deletion-contraction property of the Tutte polynomial (see
Definition \ref{contracting}). Let us start by choosing the bottom
special edge in $\Sigma_{n+1}$: then, by deletion and contraction,
we have
\begin{center}
\begin{picture}(400,80)
\letvertex A=(200,75)
\letvertex B=(185,50) \letvertex F=(215,50)
\letvertex C=(170,25)  \letvertex E=(230,25)
\letvertex D=(155,0)  \letvertex G=(245,0)
\letvertex N=(185,0)  \letvertex L=(215,0)

\put(54,40){$H_{n+1}(x,y)\ \ = \ \ T\, ( $} \put(238,40){$)=$}

\put(195,55){$\Sigma_n$} \put(165,5){$\Sigma_n$}
\put(225,5){$\Sigma_n$}

\drawundirectededge(D,G){} \drawundirectededge(A,D){}
\drawundirectededge(A,G){}
   \drawundirectededge(L,E){}
\drawundirectededge(B,F){} \drawundirectededge(C,N){}

\drawvertex(A){$\bullet$}\drawvertex(B){$\bullet$}
\drawvertex(C){$\bullet$}
 \drawvertex(D){$\bullet$}
\drawvertex(E){$\bullet$}\drawvertex(F){$\bullet$}
\drawvertex(G){$\bullet$}
\drawvertex(N){$\bullet$}\drawvertex(L){$\bullet$}

\end{picture}
\end{center}

%%%%%%%%%%%%%%%%%%%%%%%%%%%%%%%%%%%%%%%%%%%%%%%%%%%%%%%%%%%%%%%%%%%%%%%%%%%%%%%
\begin{center}
\begin{picture}(400,80)
\letvertex A=(100,75)
\letvertex B=(85,50) \letvertex F=(115,50)
\letvertex C=(70,25)  \letvertex E=(130,25)
\letvertex D=(55,0)  \letvertex G=(145,0)
\letvertex N=(85,0)  \letvertex L=(115,0)

\letvertex a=(300,75)
\letvertex b=(285,50) \letvertex f=(315,50)
\letvertex c=(270,25)  \letvertex e=(330,25)
\letvertex d=(255,0)  \letvertex g=(345,0)
\letvertex n=(300,0)  \letvertex l=(300,0)

\put(95,55){$\Sigma_n$} \put(125,5){$\Sigma_n$}
\put(65,5){$\Sigma_n$}

\put(295,55){$\Sigma_n$} \put(268,5){$\Sigma_n$}
\put(323,5){$\Sigma_n$}

\put(45,40){$T\, ( $} \put(138,40){$)$} \put(195,40){$+ $}

\put(245,40){$T\, ( $} \put(335,40){$).$}

\drawundirectededge(D,N){} \drawundirectededge(L,G){}
\drawundirectededge(A,D){} \drawundirectededge(A,G){}
   \drawundirectededge(L,E){}
\drawundirectededge(B,F){} \drawundirectededge(C,N){}

\drawundirectededge(d,g){} \drawundirectededge(a,d){}
\drawundirectededge(a,g){}
   \drawundirectededge(l,e){}
\drawundirectededge(b,f){} \drawundirectededge(c,n){}

\drawvertex(A){$\bullet$}\drawvertex(B){$\bullet$}
\drawvertex(C){$\bullet$}
 \drawvertex(D){$\bullet$}
\drawvertex(E){$\bullet$}\drawvertex(F){$\bullet$}
\drawvertex(G){$\bullet$}
\drawvertex(N){$\bullet$}\drawvertex(L){$\bullet$}

\drawvertex(a){$\bullet$}\drawvertex(b){$\bullet$}
\drawvertex(c){$\bullet$}
 \drawvertex(d){$\bullet$}
\drawvertex(e){$\bullet$}\drawvertex(f){$\bullet$}
\drawvertex(g){$\bullet$}
\drawvertex(n){$\bullet$}\drawvertex(l){$\bullet$}
\end{picture}
\end{center}

Next, in order to compute the Tutte polynomial, we can use
Property \eqref{prodotto} for the graph on the left and, for the
graph on the right, we can apply again the deletion-contraction
argument, with respect to the left special edge. Thus, we get:

\begin{center}
\begin{picture}(400,80)
\letvertex A=(70,65)
\letvertex B=(55,40) \letvertex F=(85,40)

\letvertex a=(240,75)
\letvertex b=(225,50) \letvertex f=(255,50)
\letvertex c=(210,25)  \letvertex e=(270,25)
\letvertex d=(195,0)  \letvertex g=(285,0)
\letvertex n=(240,0)  \letvertex l=(240,0)

\letvertex p=(370,75)
\letvertex q=(355,50) \letvertex r=(385,50)
\letvertex s=(340,25)  \letvertex t=(400,25)
\letvertex u=(325,0)  \letvertex v=(370,0)
\letvertex w=(370,0)  \letvertex z=(415,0)

\put(-60,40){$H_{n+1}(x,y)=x^2T\, ( $} \put(93,40){$)^3$}
\put(135,40){$+ $}

\put(65,45){$\Sigma_n$}

\put(235,55){$\Sigma_n$} \put(208,5){$\Sigma_n$}
\put(263,5){$\Sigma_n$}

\put(365,55){$\Sigma_n$} \put(338,5){$\Sigma_n$}
\put(393,5){$\Sigma_n$}

\put(185,40){$T\, ( $} \put(275,40){$)$}\put(290,40){$+$}

\put(310,40){$T\, ( $} \put(410,40){$)=$}

\drawundirectededge(A,B){} \drawundirectededge(F,B){}
\drawundirectededge(A,F){}

\drawundirectededge(d,g){}
\drawundirectededge(a,b){}\drawundirectededge(c,d){}
 \drawundirectededge(a,g){}\drawundirectededge(e,g){}
   \drawundirectededge(l,e){}
\drawundirectededge(b,f){} \drawundirectededge(c,n){}

\drawundirectededge(p,u){}
\drawundirectededge(u,z){}\drawundirectededge(p,z){}
 \drawundirectededge(r,s){}
   \drawundirectededge(s,v){}
\drawundirectededge(t,w){}

\drawvertex(A){$\bullet$}\drawvertex(B){$\bullet$}
\drawvertex(F){$\bullet$}

\drawvertex(a){$\bullet$} \drawvertex(b){$\bullet$}
\drawvertex(c){$\bullet$}
 \drawvertex(d){$\bullet$}
\drawvertex(e){$\bullet$}\drawvertex(f){$\bullet$}
\drawvertex(g){$\bullet$}
\drawvertex(n){$\bullet$}\drawvertex(l){$\bullet$}

\drawvertex(p){$\bullet$}\drawvertex(v){$\bullet$}

 \drawvertex(r){$\bullet$}
\drawvertex(s){$\bullet$}\drawvertex(w){$\bullet$}
\drawvertex(t){$\bullet$}
\drawvertex(u){$\bullet$}\drawvertex(z){$\bullet$}
\end{picture}
\end{center}

\begin{center}
\begin{picture}(400,80)
\letvertex A=(90,65)
\letvertex B=(75,40) \letvertex F=(105,40)

\letvertex p=(240,75)
\letvertex q=(225,50) \letvertex r=(255,50)
\letvertex s=(210,25)  \letvertex t=(270,25)
\letvertex u=(195,0)  \letvertex v=(240,0)
\letvertex w=(240,0)  \letvertex z=(285,0)

\letvertex a=(370,75)
%\letvertex b=(355,50) \letvertex f=(385,50)
\letvertex c=(340,25)  \letvertex e=(400,25)
\letvertex d=(325,0)  \letvertex g=(370,0)
\letvertex n=(370,0)  \letvertex l=(415,0)

\put(85,45){$\Sigma_n$}

\put(235,51){$\Sigma_n$} \put(208,5){$\Sigma_n$}
\put(261,5){$\Sigma_n$}

\put(365,51){$\Sigma_n$} \put(338,5){$\Sigma_n$}
\put(393,5){$\Sigma_n$}

\put(,40){$(x^2 +x)T\, ( $}  \put(140,40){$+ $}

\put(180,40){$T\, ( $} \put(114,40){$)^3$}

\put(310,40){$T\, ( $} \put(410,40){$)$}

\drawundirectededge(A,B){} \drawundirectededge(F,B){}
\drawundirectededge(A,F){}

\drawundirectededge(d,n){}
\drawundirectededge(a,c){}\drawundirectededge(c,d){}
 \drawundirectededge(a,e){}
\drawundirectededge(e,g){}
   \drawundirectededge(l,g){}
\drawundirectededge(c,e){} \drawundirectededge(c,n){}
\drawundirectededge(e,l){}

\drawundirectededge(p,u){}
\drawundirectededge(u,z){}\drawundirectededge(p,r){}
\drawundirectededge(t,z){}
 \drawundirectededge(r,s){}
   \drawundirectededge(s,v){}
\drawundirectededge(t,w){}

\drawvertex(A){$\bullet$}\drawvertex(B){$\bullet$}
\drawvertex(F){$\bullet$}

\drawvertex(a){$\bullet$}
%\drawvertex(b){$\bullet$}
\drawvertex(c){$\bullet$}
 \drawvertex(d){$\bullet$}
\drawvertex(e){$\bullet$}
%\drawvertex(f){$\bullet$}
\drawvertex(g){$\bullet$}
\drawvertex(n){$\bullet$}\drawvertex(l){$\bullet$}
\put(273,40){$)$}\put(290,40){$+$}
\drawvertex(p){$\bullet$}\drawvertex(v){$\bullet$}

 \drawvertex(r){$\bullet$}
\drawvertex(s){$\bullet$}\drawvertex(w){$\bullet$}
\drawvertex(t){$\bullet$}
\drawvertex(u){$\bullet$}\drawvertex(z){$\bullet$}
\end{picture}
\end{center}
where the last equality is obtained by using Property
\eqref{prodotto} and then by applying the deletion-contraction
argument with respect to the right special edge. Finally, we can
apply again Property \eqref{prodotto} for the middle graph, and
then we can observe that the graph $I_{n+1}$ appeared on the
right, so that we get:
\begin{center}
\begin{picture}(300,60)

\letvertex A=(110,40)
\letvertex B=(95,15) \letvertex F=(125,15)

\letvertex a=(280,55)\letvertex b=(265,30)
\letvertex c=(250,5)\letvertex d=(280,5)
\letvertex e=(310,5)\letvertex f=(295,30)

\put(105,20){$\Sigma_n$}

\put(275,35){$\Sigma_n$} \put(260,10){$\Sigma_n$}
\put(290,10){$\Sigma_n$}

\put(-73,15){$H_{n+1}(x,y)=(x^2 +x +1)T\, ( $} \put(130,15){$)^3$}
\put(171,15){$+ $}

\put(210,15){$T\, ( $} \put(330,15){$)$}

\drawundirectededge(A,B){} \drawundirectededge(F,B){}
\drawundirectededge(A,F){}

\drawundirectededge(a,b){} \drawundirectededge(b,c){}
\drawundirectededge(c,d){} \drawundirectededge(d,e){}
\drawundirectededge(e,f){}\drawundirectededge(f,a){}\drawundirectededge(b,f){}\drawundirectededge(b,d){}\drawundirectededge(d,f){}

\drawvertex(a){$\bullet$}\drawvertex(b){$\bullet$}
\drawvertex(c){$\bullet$}\drawvertex(d){$\bullet$}
\drawvertex(e){$\bullet$}\drawvertex(f){$\bullet$}
\drawvertex(A){$\bullet$}\drawvertex(B){$\bullet$}
\drawvertex(F){$\bullet$}
\end{picture}
\end{center}
and so $H_{n+1}(x,y) = (x^2+x+1)H_n^3(x,y) + T(I_{n+1};x,y)$, as
required.
\end{proof}
\smallskip
\begin{os}\rm
Since the graph $I_{n+1}$ can be regarded as the Sierpi\'{n}ski
graph $\Gamma_{n+1}$, where each subgraph $G_1,G_2,G_3$ is
isomorphic to $\Sigma_{n}$, it is clear that $T(I_{n+1};x,y)$ is
given by $T_{n+1}(x,y)$, obtained from Equations \eqref{t2n},
\eqref{t1n} and \eqref{t0n}, where $T_{2,n}, T_{1,n}$ and
$T_{0,n}$ have to be replaced by $H_{2,n}$, $H_{1,n}$ and
$H_{0,n}$, respectively. Moreover, the terms of $T(I_{n+1};x,y)$
in $H_{n+1}(x,y)$ are exactly the terms of $H_{n+1}(x,y)$ having a
factor $(y-1)$ or $\frac{1}{x-1}$, i.e., the terms of first and
second type corresponding to subgraphs of $\Sigma_{n+1}$
containing the three special edges.
\end{os}

The following lemma can be easily proven by induction, using
Equations \eqref{h1} and \eqref{h0}.

\begin{lem}\label{lemmafattorihanoi}
For each $n\geq 1$, $x-1$ divides $H_{1,n}(x,y)$ and $(x-1)^2$
divides $H_{0,n}(x,y)$ in $\mathbb{Z}[x,y]$.
\end{lem}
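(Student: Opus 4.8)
The plan is to argue by induction on $n$, exactly mirroring the proof of Lemma \ref{lemmafattori} for the Sierpi\'nski graphs. The base case $n=1$ is immediate from the initial conditions, since $H_{1,1}(x,y)=x-1$ is divisible by $x-1$ and $H_{0,1}(x,y)=(x-1)^2$ is divisible by $(x-1)^2$. For the inductive step I would assume that $x-1$ divides $H_{1,n}$ and $(x-1)^2$ divides $H_{0,n}$, and then inspect the recursive formulas \eqref{h1} and \eqref{h0} monomial by monomial.

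The key bookkeeping device is to assign to each monomial the total power of $x-1$ that the inductive hypothesis guarantees: a factor $H_{1,n}$ contributes at least $1$, a factor $H_{0,n}$ contributes at least $2$, a factor $H_{2,n}$ contributes $0$, and the explicit prefactors $(x-1)$, $(x-1)^2$ and $\frac{1}{x-1}$ contribute $+1$, $+2$ and $-1$, respectively. One then checks that every monomial of $H_{1,n+1}$ receives total valuation at least $1$, and every monomial of $H_{0,n+1}$ receives total valuation at least $2$. For $H_{1,n+1}$ this is routine: every term carries at least one factor $H_{1,n}$ or $H_{0,n}$ except for $(x-1)H_{2,n}^3$, whose explicit prefactor already supplies the needed power.

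The only place demanding a little care is the block multiplied by $\frac{1}{x-1}$: there the bracketed expression must be divisible by $(x-1)^2$ so that, after cancelling one factor, divisibility by $x-1$ survives. This holds because each summand inside that bracket (for instance $H_{2,n}^2H_{0,n}$ or $7H_{2,n}H_{1,n}^2$) already has valuation at least $2$ under the hypothesis. The analysis of $H_{0,n+1}$ is identical in spirit, now requiring valuation at least $2$ everywhere; again the only subtle block is the $\frac{1}{x-1}$ block, whose bracket must be divisible by $(x-1)^3$. This follows since its lowest-valuation summands, namely $12H_{2,n}H_{1,n}H_{0,n}$ and $14H_{1,n}^3$, already have valuation $3$, while every other summand has valuation at least $4$.

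I expect the main (and essentially only) obstacle to be the correct valuation accounting for the two $\frac{1}{x-1}$ blocks, since these are precisely the terms where a careless count could drop the power of $x-1$ below the required threshold; everywhere else the divisibility is visibly inherited from the factors $H_{1,n}$ and $H_{0,n}$. Once the valuations are tabulated as above, the conclusions $x-1\mid H_{1,n+1}$ and $(x-1)^2\mid H_{0,n+1}$ follow at once, closing the induction.
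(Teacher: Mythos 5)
Your proposal is correct and follows exactly the route the paper intends: the paper disposes of this lemma with the single remark that it is ``easily proven by induction, using Equations \eqref{h1} and \eqref{h0}'', and your $(x-1)$-adic valuation bookkeeping is precisely the detailed version of that induction. In particular, your accounting of the two critical $\frac{1}{x-1}$ blocks (bracket valuation at least $2$ in \eqref{h1} and at least $3$ in \eqref{h0}) checks out term by term.
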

As a consequence, we can write
\begin{eqnarray}\label{semplificatehanoi}
H_{1,n}(x,y) = (x-1)N_n(x,y) \qquad \mbox{ and } \qquad
H_{0,n}(x,y) = (x-1)^2M_n(x,y),
\end{eqnarray}
with $N_n(x,y)$ and $ M_n(x,y) \in \mathbb{Z}[x,y]$.

\indent Using \eqref{semplificatehanoi} for $H_{1,n}(x,y)$ and
$H_{0,n}(x,y)$, Equations \eqref{h2}, \eqref{h1} and \eqref{h0}
can be rewritten as

\begin{eqnarray}\label{pigroni2}
H_{2,n+1}(x,y) &=& (y-1)H_{2,n}^3+3H_{2,n}^3+6H_{2,n}^2N_{n}\\
&+& (x-1)\left(6H_{2,n}^2N_{n}+3H_{2,n}N_{n}^2\right) +
3(x-1)^2H_{2,n}N_{n}^2\nonumber
\end{eqnarray}

\begin{eqnarray}\label{pigroni1}
N_{n+1}(x,y) &=&
(y-1)H_{2,n}^2N_{n}+H_{2,n}^3+7H_{2,n}^2N_{n}+H_{2,n}^2M_{n}+7H_{2,n}N_{n}^2\\
&+&(x-1)\left(5H_{2,n}^2N_{n}+2H_{2,n}^2M_{n}+14H_{2,n}N_{n}^2+2H_{2,n}N_{n}M_{n}+4N_{n}^3\right)\nonumber\\
&+&(x-1)^2\left(H_{2,n}^2M_{n}+7H_{2,n}N_{n}^2+4H_{2,n}N_{n}M_{n}+7N_{n}^3+N_{n}^2M_{n}\right)\nonumber\\
&+&(x-1)^3
\left(2H_{2,n}N_{n}M_{n}+3N_{n}^3+2N_{n}^2M_{n}\right)\nonumber\\
&+& (x-1)^4N_{n}^2M_{n}\nonumber
\end{eqnarray}

\begin{eqnarray}\label{pigroni0}
M_{n+1}(x,y) \!\!&=&
3(y-1)H_{2,n}N_{n}^2+H_{2,n}^3+12H_{2,n}^2N_{n}+3H_{2,n}^2M_{n}+36H_{2,n}N_{n}^2\\
&+&12H_{2,n}N_{n}M_{n}+14N_{n}^3\nonumber\\
&+&(x-1)\!\left((y-1)N_{n}^3\!+\!9H_{2,n}^2N_{n}\!+\!6H_{2,n}^2M_{n}\!+\!60H_{2,n}N_{n}^2\!+\!42H_{2,n}N_{n}M_{n}\right.\nonumber\\
&+&\left. 3H_{2,n}M_{n}^2+60N_{n}^3+24N_{n}^2M_{n}
\right)\nonumber\\
&+&
(x-1)^2\left(3H_{2,n}^2M_{n}+27H_{2,n}N^2_{n}+48H_{2,n}N_{n}M_{n}+9
H_{2,n}M_{n}^2\right.\nonumber\\
&+&\left.72N_{n}^3+75N_{n}^2M_{n}+9N_{n}M_{n}^2\right)\nonumber\\
&+& (x-1)^3\!\left(18H_{2,n}N_{n}M_{n}\!+\!9H_{2,n}M_{n}^2\!+\!27N_{n}^3\!+\!78N_{n}^2M_n\!+\!27N_{n}M_{n}^2\!+\!M_{n}^3\right)\nonumber\\
&+& (x-1)^4
\left(3H_{2,n}M_{n}^2+27N_{n}^2M_{n}+27N_{n}M_{n}^2+3M_{n}^3\right)\nonumber\\
&+&
(x-1)^5\left(9N_{n}M_{n}^2+3M_{n}^3\right)+(x-1)^6M_{n}^3,\nonumber
\end{eqnarray}
with initial conditions
$$
H_{2,1}(x,y)=y+2 \qquad N_1(x,y)= M_1(x,y) =1.
$$
As in Section \ref{sezione serpin}, we will use these reduced
formulas to compute several evaluations of the Tutte polynomial.
Let us start by writing the reliability polynomial
$R(\Sigma_n,p)$.

\begin{prop}\label{donatellahanoi}
For each $n\geq 1$, the reliability polynomial $R(\Sigma_n,p)$ is
given by
$$
R(\Sigma_n,p) =
p^{3^n-1}(1-p)^{\frac{3^n-1}{2}}H_n\left(1,\frac{1}{1-p}\right),
$$
with $H_n\left(1,\frac{1}{1-p}\right)=
H_{2,n}\left(1,\frac{1}{1-p}\right)$ and

\begin{eqnarray}\label{h2ridottein1}
H_{2,n+1}\left(1,\frac{1}{1-p}\right) &=&
\frac{p}{1-p}H_{2,n}^3+3H_{2,n}^3+6H_{2,n}^2N_{n}
\end{eqnarray}

\begin{eqnarray}\label{h1ridottein1}
N_{n+1}\left(1,\frac{1}{1-p}\right) &=&
\frac{p}{1-p}H_{2,n}^2N_{n}+H_{2,n}^3+7H_{2,n}^2N_{n}+H_{2,n}^2M_{n}+7H_{2,n}N_{n}^2
\end{eqnarray}

\begin{eqnarray}\label{h0ridottein1}
M_{n+1}\left(1,\frac{1}{1-p}\right) &=&
\frac{3p}{1-p}H_{2,n}N_{n}^2+H_{2,n}^3+12H_{2,n}^2N_{n}+3H_{2,n}^2M_{n}\\&+&36H_{2,n}N_{n}^2
+12H_{2,n}N_{n}M_{n}+14N_{n}^3,\nonumber
\end{eqnarray}
with initial conditions
$$
H_{2,1}\left(1,\frac{1}{1-p}\right)=\frac{3-2p}{1-p} \qquad
N_1\left(1,\frac{1}{1-p}\right)= M_1\left(1,\frac{1}{1-p}\right) =
1.
$$
\end{prop}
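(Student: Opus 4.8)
The plan is to follow the same three-step scheme used for the Sierpi\'{n}ski graphs in Proposition \ref{donatellasierp}, feeding in the Hanoi data in place of the Sierpi\'{n}ski data. The backbone of the argument is part (1) of Theorem \ref{twopolynomials}, which writes $R(\Sigma_n,p)$ as $p^{|V(\Sigma_n)|-1}(1-p)^{|E(\Sigma_n)|-|V(\Sigma_n)|+1}H_n(1,\frac{1}{1-p})$. First I would substitute the values $|V(\Sigma_n)|=3^n$ and $|E(\Sigma_n)|=\frac{3^{n+1}-3}{2}$ recorded in Section \ref{4.2}; a direct computation gives $|V(\Sigma_n)|-1=3^n-1$ and $|E(\Sigma_n)|-|V(\Sigma_n)|+1=\frac{3^n-1}{2}$, which are exactly the exponents of $p$ and $(1-p)$ appearing in the statement. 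Here $\Sigma_n$ is understood without its three loops, as in the convention of Section \ref{4.2}; this is harmless for the reliability polynomial, since loops never affect the connectivity of any pair of vertices, so reinstating the three loops would multiply $H_n$ by $y^3=(1-p)^{-3}$ while raising $|E(\Sigma_n)|$ by $3$, and the two effects cancel, leaving $R(\Sigma_n,p)$ unchanged. I would record this remark explicitly to justify that the loop-free edge count is the correct quantity to insert into Theorem \ref{twopolynomials}(1).

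The second step is to reduce $H_n$ on the line $x=1$. By Lemma \ref{lemmafattorihanoi}, $x-1$ divides $H_{1,n}(x,y)$ and $(x-1)^2$ divides $H_{0,n}(x,y)$, hence $H_{1,n}(1,y)=H_{0,n}(1,y)=0$ for every $y$; since $H_n=H_{2,n}+3H_{1,n}+H_{0,n}$, this yields $H_n(1,\frac{1}{1-p})=H_{2,n}(1,\frac{1}{1-p})$, as asserted. The same phenomenon admits the geometric reading of Remark \ref{geometricremark}: at $x=1$ only the connected spanning subgraphs survive, and these all lie in $F_{2,n}$.

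For the three recursions I would specialize the reduced relations \eqref{pigroni2}, \eqref{pigroni1} and \eqref{pigroni0} at $x=1$. Every term carrying a factor $(x-1)^k$ with $k\geq 1$ then vanishes, and only the terms multiplied by $(y-1)$ or by no power of $(x-1)$ at all remain. Putting $y=\frac{1}{1-p}$, so that $y-1=\frac{p}{1-p}$ and $3(y-1)=\frac{3p}{1-p}$, the surviving parts of \eqref{pigroni2}, \eqref{pigroni1}, \eqref{pigroni0} become precisely \eqref{h2ridottein1}, \eqref{h1ridottein1} and \eqref{h0ridottein1}; the initial conditions follow by evaluating $H_{2,1}(x,y)=y+2$ at $(1,\frac{1}{1-p})$, which gives $\frac{3-2p}{1-p}$, together with $N_1=M_1=1$.

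None of these steps is genuinely difficult; the substantive content is bookkeeping plus the verification that the specialized polynomial identities match the stated recursions. The one point that deserves care, and what I would flag as the main obstacle, is the coordination of the prefactor exponents with the loop convention: one must confirm that using the loop-free count $|E(\Sigma_n)|=\frac{3^{n+1}-3}{2}$ produces exactly the exponents $3^n-1$ and $\frac{3^n-1}{2}$, and that the three omitted loops leave $R(\Sigma_n,p)$ invariant. Everything else is a faithful transcription of the Sierpi\'{n}ski computation with $T_{2,n},N_n,M_n$ replaced by their Hanoi analogues $H_{2,n},N_n,M_n$.
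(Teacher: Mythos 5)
Your proposal is correct and follows essentially the same route as the paper's proof: apply part (1) of Theorem \ref{twopolynomials} with $|V(\Sigma_n)|=3^n$ and $|E(\Sigma_n)|=\frac{3^{n+1}-3}{2}$, use Lemma \ref{lemmafattorihanoi} to reduce $H_n(1,\cdot)$ to $H_{2,n}(1,\cdot)$, and specialize Equations \eqref{pigroni2}, \eqref{pigroni1}, \eqref{pigroni0} at $x=1$, $y=\frac{1}{1-p}$. Your additional check that the loop-free convention leaves $R(\Sigma_n,p)$ unchanged (the factor $y^3$ cancelling against $(1-p)^3$) is a worthwhile clarification that the paper leaves implicit, but it does not alter the argument.
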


\begin{proof}
By Lemma \ref{lemmafattorihanoi}, one has
$H_{1,n}(1,y)=H_{0,n}(1,y)=0$, for every $y\in \mathbb{R}$.
Therefore, $H_n\left(1,\frac{1}{1-p}\right)=
H_{2,n}\left(1,\frac{1}{1-p}\right)$; then it is enough to apply
(1) of Theorem \ref{twopolynomials} and use Equations
\eqref{pigroni2}, \eqref{pigroni1} and \eqref{pigroni0}.
\end{proof}

\begin{prop}\label{propcomplexhanoi}
The complexity $\tau(\Sigma_n)$ is $H_n(1,1)= H_{2,n}(1,1)$, where
\begin{eqnarray*}
H_{2,n+1}(1,1) &=&3H_{2,n}^3+ 6H_{2,n}^2N_{n}
\end{eqnarray*}
\begin{eqnarray*}
N_{n+1}(1,1)
&=&H_{2,n}^3+7H_{2,n}^2N_{n}+H_{2,n}^2M_{n}+7H_{2,n}N_{n}^2
\end{eqnarray*}
\begin{eqnarray*}
M_{n+1}(1,1)
=H_{2,n}^3+12H_{2,n}^2N_{n}+3H_{2,n}^2M_{n}+36H_{2,n}N_{n}^2+
12H_{2,n}N_{n}M_{n}+14N_{n}^3,
\end{eqnarray*}
with initial conditions
$$
H_{2,1}(1,1)=3 \qquad N_1(1,1)= M_1(1,1) =1.
$$
\end{prop}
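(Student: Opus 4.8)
The plan is to follow exactly the strategy used for the Sierpi\'{n}ski graphs in Proposition \ref{propcomplexsierp}, specializing the reduced recursive formulas for $H_{2,n}$, $N_n$ and $M_n$ to the point $(x,y)=(1,1)$.

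First I would invoke (1) of Theorem \ref{evaluations}, which gives $\tau(\Sigma_n)=H_n(1,1)$ directly, since $H_n(x,y)=T(\Sigma_n;x,y)$. Next, by Lemma \ref{lemmafattorihanoi}, the polynomial $H_{1,n}(x,y)$ is divisible by $(x-1)$ and $H_{0,n}(x,y)$ by $(x-1)^2$; hence $H_{1,n}(1,y)=H_{0,n}(1,y)=0$ for every $y\in\mathbb{R}$, and in particular the decomposition $H_n=H_{2,n}+3H_{1,n}+H_{0,n}$ collapses to $H_n(1,1)=H_{2,n}(1,1)$. This already justifies the first displayed identity.

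To obtain the three recursions, I would evaluate the reduced Equations \eqref{pigroni2}, \eqref{pigroni1} and \eqref{pigroni0} at $x=1$ and $y=1$. The key observation is that setting $x=1$ kills every summand carrying a factor $(x-1)^k$ with $k\ge 1$, while setting $y=1$ kills every summand carrying the factor $(y-1)$. In \eqref{pigroni2} only the $(x-1)$-free, $(y-1)$-free part $3H_{2,n}^3+6H_{2,n}^2N_{n}$ survives; in \eqref{pigroni1} only $H_{2,n}^3+7H_{2,n}^2N_{n}+H_{2,n}^2M_{n}+7H_{2,n}N_{n}^2$ survives; and in \eqref{pigroni0} only $H_{2,n}^3+12H_{2,n}^2N_{n}+3H_{2,n}^2M_{n}+36H_{2,n}N_{n}^2+12H_{2,n}N_{n}M_{n}+14N_{n}^3$ survives. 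These are precisely the claimed relations. The initial conditions follow from $H_{2,1}(x,y)=y+2$, whence $H_{2,1}(1,1)=3$, together with $N_1=M_1=1$.

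There is no real obstacle here: the only thing to check is that evaluation at $(1,1)$ annihilates exactly the right terms, which is immediate from the explicit form of the reduced equations. Equivalently, one could evaluate the reliability-polynomial formulas \eqref{h2ridottein1}, \eqref{h1ridottein1} and \eqref{h0ridottein1} at $p=0$, where $\frac{p}{1-p}=0$ and $\frac{3p}{1-p}=0$, obtaining the very same three recursions and the same initial conditions; this is the form in which the analogous Sierpi\'{n}ski computation was carried out.
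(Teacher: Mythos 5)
Your proposal is correct and follows essentially the same route as the paper: the paper obtains the recursions by evaluating the reliability-polynomial formulas \eqref{h2ridottein1}--\eqref{h0ridottein1} at $p=0$, which is exactly the alternative you mention in your last paragraph, and your primary route (setting $(x,y)=(1,1)$ directly in \eqref{pigroni2}--\eqref{pigroni0}) is the same computation without the intermediate reliability step. All the surviving terms and initial conditions check out.
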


\begin{proof}
The complexity of $\Sigma_n$ is obtained by evaluating
$H_n\left(1,\frac{1}{1-p}\right)=H_{2,n}\left(1,\frac{1}{1-p}\right)$
in $p=0$, using Equations \eqref{h2ridottein1},
\eqref{h1ridottein1} and \eqref{h0ridottein1}.
\end{proof}

\begin{os}\rm
These formulas coincide with the relations obtained in
\cite{noispanning}, without using Tutte polynomials. More
precisely, one can find in \cite[Proposition 3.4]{noispanning}:
\begin{enumerate}
\item $ H_n(1,1)=\tau(\Sigma_n)= 3^{\frac{3^n+2n-1}{4}}5^{\frac{3^{n}-2n-1}{4}}$;
\item $N_n(1,1)= 3^{\frac{3^{n}-2n-1}{4}}5^{\frac{3^{n}-2n-1}{4}} \cdot \frac{5^n-3^n}{2}$;
\item $M_n(1,1) = 3^{\frac{3^{n}-6n+3}{4}}5^{\frac{3^{n}-2n-1}{4}}\cdot\left(\frac{5^n-3^n}{2}\right)^2$.
\end{enumerate}
Then, the asymptotic growth constant of the spanning trees of
$\Sigma_n$ is
$$
\lim_{n\to \infty}\frac{\log(\tau(\Sigma_n))}{|V(\Sigma_n)|}=
\frac{1}{4}\left(\log 3+\log 5\right).
$$
\end{os}
\vspace{0.5cm}

Evaluating $H_n\left(1,\frac{1}{1-p}\right)$ in $p=\frac{1}{2}$
gives the number of connected spanning subgraphs of $\Sigma_n$.
\begin{prop}\label{propconnsubgrhanoi}
The number of connected spanning subgraphs of $\Sigma_n$ is given
by $H_n(1,2) = H_{2,n}(1,2)$, with
\begin{eqnarray*}
H_{2,n+1}(1,2) &=& 4H_{2,n}^3+6H_{2,n}^2N_{n}
\end{eqnarray*}
\begin{eqnarray*}
N_{n+1}(1,2) &=&H_{2,n}^3+
8H_{2,n}^2N_{n}+H_{2,n}^2M_{n}+7H_{2,n}N_{n}^2
\end{eqnarray*}
\begin{eqnarray*}
M_{n+1}(1,2) =H_{2,n}^3+12H_{2,n}^2N_{n}+3H_{2,n}^2M_{n}+
39H_{2,n}N_{n}^2 +12H_{2,n}N_{n}M_{n}+14N_{n}^3,
\end{eqnarray*}
with initial conditions
$$
H_{2,1}(1,2)=4 \qquad N_1(1,2)= M_1(1,2)=1.
$$
\end{prop}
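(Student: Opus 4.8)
The plan is to mirror exactly the template already used for the two preceding evaluations of $H_n$, namely Propositions~\ref{donatellahanoi} and~\ref{propcomplexhanoi}: specializing the Tutte polynomial at the point $(1,2)$ proceeds in the same way as specializing it along the line $x=1$, and the combinatorial content is then read off from Theorem~\ref{evaluations}.

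First I would invoke Lemma~\ref{lemmafattorihanoi}: since $(x-1)$ divides $H_{1,n}(x,y)$ and $(x-1)^2$ divides $H_{0,n}(x,y)$, both vanish identically at $x=1$, that is $H_{1,n}(1,y)=H_{0,n}(1,y)=0$ for every $y\in\mathbb{R}$. Hence, from $H_n=H_{2,n}+3H_{1,n}+H_{0,n}$, one gets $H_n(1,2)=H_{2,n}(1,2)$. The meaning of this number is supplied directly by Formula~(2) of Theorem~\ref{evaluations}, which identifies $H_n(1,2)$ with the number of connected spanning subgraphs of $\Sigma_n$. Conceptually this is the expected reduction: exactly as in Remark~\ref{geometricremark}, at $x=1$ only the subgraphs $A$ with $r(\Sigma_n)-r(A)=0$ survive, and for a spanning subgraph this forces $k(A)=1$, so that $A$ is connected and lies in $F_{2,n}$.

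The recursions are obtained by specializing the reduced Equations~\eqref{pigroni2}, \eqref{pigroni1} and~\eqref{pigroni0} at $x=1$, $y=2$, writing $H_{2,n}$, $N_n$, $M_n$ for their values at $(1,2)$ as in the statement. The simplification is that every summand carrying a factor $(x-1)^k$ with $k\geq1$ disappears, leaving only the ``$(x-1)^0$'' blocks; setting the remaining $(y-1)$ factors equal to $1$ (so that, e.g., the leading $3(y-1)$ in the $M$-recursion becomes $3$) collapses these blocks to the claimed formulas. The initial data come from $H_{2,1}(1,2)=(y+2)|_{y=2}=4$ together with $N_1=M_1=1$.

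I do not expect a genuine obstacle: the statement is a direct corollary of Theorem~\ref{noname}, Lemma~\ref{lemmafattorihanoi} and Theorem~\ref{evaluations}. The only point requiring care is the bookkeeping when collecting the surviving constant-coefficient terms --- for instance checking that the two $H_{2,n}^3$ contributions in \eqref{pigroni2} add to $4H_{2,n}^3$, that the $H_{2,n}^2N_n$ terms in \eqref{pigroni1} combine to $8H_{2,n}^2N_n$, and that the two quadratic-in-$N_n$ contributions in \eqref{pigroni0} sum to $39H_{2,n}N_n^2$ (from $36+3$). These are routine numerical verifications with no conceptual content.
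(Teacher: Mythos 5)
Your proposal is correct and follows essentially the same route as the paper: Lemma \ref{lemmafattorihanoi} gives $H_{1,n}(1,y)=H_{0,n}(1,y)=0$ so that $H_n(1,2)=H_{2,n}(1,2)$, Formula (2) of Theorem \ref{evaluations} supplies the combinatorial interpretation, and the recursions come from evaluating the reduced Equations \eqref{pigroni2}--\eqref{pigroni0} at $(1,2)$ (the paper reaches the same formulas by setting $p=\tfrac12$ in the reliability recursions, which is the identical computation). Your arithmetic checks ($1+3=4$, $1+7=8$, $3+36=39$) are all right.
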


\begin{proof}
One has $H_n(1,2)=H_{2,n}(1,2)$ since $H_{1,n}(1,y) =
H_{0,n}(1,y)=0$, for every $y\in \mathbb{R}$ (see Lemma
\ref{lemmafattorihanoi}). Then it suffices to use Formula (2) of
Theorem \ref{evaluations}.
\end{proof}

The following proposition about the number of spanning forests of
$\Sigma_n$ holds.

\begin{prop}\label{spannforesthanoi}
The number of spanning forests of $\Sigma_n$ is given by
$$
H_{n}(2,1) = H_{2,n}(2,1) + 3N_n(2,1)+M_n(2,1),
$$
where
\begin{eqnarray*}
H_{2,n+1}(2,1)= 3H_{2,n}^3+12H_{2,n}^2N_{n}+6H_{2,n}N_{n}^2
\end{eqnarray*}
\begin{eqnarray*}
N_{n+1}(2,1)&=&H_{2,n}^3+12H_{2,n}^2N_n+
4H_{2,n}^2M_{n}+28H_{2,n}N_{n}^2+8H_{2,n}N_{n}M_{n}+14N_{n}^3
+4N_{n}^2M_{n}
\end{eqnarray*}
\begin{eqnarray*}
M_{n+1}(2,1)&=&H_{2,n}^3+21H_{2,n}^2N_n+12H_{2,n}^2M_n+123H_{2,n}N_n^2+120H_{2,n}N_{n}M_{n}+24H_{2,n}M_{n}^2\\&+&173N_{n}^3
+204N_{n}^2M_{n}+72N_{n}M_{n}^2+8M_{n}^3,
\end{eqnarray*}
with initial conditions
$$
H_{2,1}(2,1)=3 \qquad N_1(2,1)=M_1(2,1)=1.
$$
\end{prop}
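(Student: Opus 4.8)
The plan is to read the statement off directly from the spanning-forest evaluation of the Tutte polynomial together with the reduced recursions established just before the proposition. First I would invoke part (3) of Theorem \ref{evaluations}, which identifies $H_n(2,1)$ with the number of spanning forests of $\Sigma_n$. To get the displayed decomposition I would use Lemma \ref{lemmafattorihanoi} in the form \eqref{semplificatehanoi}: setting $x=2$ gives $H_{1,n}(2,y)=(2-1)N_n(2,y)=N_n(2,y)$ and $H_{0,n}(2,y)=(2-1)^2M_n(2,y)=M_n(2,y)$, so that $H_n(2,1)=H_{2,n}(2,1)+3H_{1,n}(2,1)+H_{0,n}(2,1)=H_{2,n}(2,1)+3N_n(2,1)+M_n(2,1)$, exactly as claimed.

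The three recursions then follow by specializing the reduced relations \eqref{pigroni2}, \eqref{pigroni1}, \eqref{pigroni0} at $(x,y)=(2,1)$. Two features make this clean: the factor $(y-1)$ vanishes at $y=1$, killing every term that carries a $(y-1)$, while $(x-1)$ equals $1$ at $x=2$, so every power $(x-1)^k$ collapses to $1$. After discarding the $(y-1)$-terms and setting all $(x-1)^k=1$, one simply collects like monomials in $H_{2,n}$, $N_n$, $M_n$. For $H_{2,n+1}(2,1)$ this gives $3H_{2,n}^3+(6+6)H_{2,n}^2N_n+(3+3)H_{2,n}N_n^2$, matching the stated formula, and the computation for $N_{n+1}(2,1)$ is of the same short type. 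The initial conditions are immediate: $H_{2,1}(2,1)=3$ (from $H_{2,1}=y+2$ evaluated at $y=1$) and $N_1(2,1)=M_1(2,1)=1$.

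The only step demanding care is $M_{n+1}(2,1)$ obtained from \eqref{pigroni0}, where each relevant monomial may occur in up to four of the seven groups of terms indexed by $(x-1)^0,\dots,(x-1)^6$. The main obstacle is therefore purely the bookkeeping of summing coefficients across these groups for each of the ten monomials $H_{2,n}^3$, $H_{2,n}^2N_n$, $H_{2,n}^2M_n$, $H_{2,n}N_n^2$, $H_{2,n}N_nM_n$, $H_{2,n}M_n^2$, $N_n^3$, $N_n^2M_n$, $N_nM_n^2$, $M_n^3$. For example $H_{2,n}N_nM_n$ accumulates $12+42+48+18=120$ from the groups of degrees $0,1,2,3$, while $N_n^2M_n$ accumulates $24+75+78+27=204$ and $M_n^3$ accumulates $1+3+3+1=8$. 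Carrying out this summation for all ten monomials produces precisely the asserted expression for $M_{n+1}(2,1)$, which completes the proof.
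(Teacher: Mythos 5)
Your proposal is correct and follows exactly the route the paper takes: apply Formula (3) of Theorem \ref{evaluations}, use Lemma \ref{lemmafattorihanoi} via \eqref{semplificatehanoi} to get $H_{1,n}(2,y)=N_n(2,y)$ and $H_{0,n}(2,y)=M_n(2,y)$, and then specialize the reduced recursions \eqref{pigroni2}--\eqref{pigroni0} at $(x,y)=(2,1)$. Your coefficient bookkeeping (e.g.\ $12+42+48+18=120$ for $H_{2,n}N_nM_n$ and $1+3+3+1=8$ for $M_n^3$) checks out against the stated formulas.
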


\begin{proof}
It suffices to apply Formula (3) of Theorem \ref{evaluations} and
to observe that $H_{1,n}(2,y)= N_n(2,y)$ and
$H_{0,n}(2,y)=M_n(2,y)$, for each $y\in \mathbb{R}$.
\end{proof}

Similarly to the case of Sierpi\'{n}ski graphs, the following
proposition holds.
\begin{prop}
$H_n(2,2) = 2^{ |E(\Sigma_{n})|} = 2^{\frac{3^{n+1}-3}{2}}$.
\end{prop}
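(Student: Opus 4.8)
The plan is to prove the identity by induction on $n$, in the same spirit as the corresponding statement for the Sierpi\'{n}ski graphs. The quickest justification is simply to invoke Formula (4) of Theorem \ref{evaluations}: since $\Sigma_n$ is a connected graph, one has $H_n(2,2)=T(\Sigma_n;2,2)=2^{|E(\Sigma_n)|}$, and the edge count $|E(\Sigma_n)|=\frac{3^{n+1}-3}{2}$ established earlier gives the claim at once. I would nonetheless prefer to exhibit the self-similar mechanism behind the equality, so that the computation parallels the one for $\Gamma_n$.

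For the base case $n=1$, recall that $\Sigma_1=\Gamma_1$ (viewed without its loops) is a triangle with three edges, so $H_1(2,2)=2^3=8=2^{\frac{3^{2}-3}{2}}$, as required.

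For the inductive step I would establish the recursion
$$
H_{n+1}(2,2)=8\,H_n(2,2)^3.
$$
This follows from the analysis preceding Theorem \ref{noname}: at $x=y=2$ every factor $(x-1)^{a}(y-1)^{b}$ equals $1$, so $H_n(2,2)$ simply counts all spanning subgraphs of $\Sigma_n$. A spanning subgraph of $\Sigma_{n+1}$ is determined by choosing, independently, which of the three special edges it contains (a factor $2^3=8$) together with its restrictions to the three copies $G_1,G_2,G_3$ of $\Sigma_n$ (a factor $H_n(2,2)^3$). Equivalently, one may read the recursion off Proposition \ref{sierp-hanoi}: evaluating at $(2,2)$ yields $H_{n+1}(2,2)=(2^2+2+1)H_n(2,2)^3+T(I_{n+1};2,2)=7H_n(2,2)^3+2^{|E(I_{n+1})|}$, and since $I_{n+1}$ is obtained from $\Sigma_{n+1}$ by contracting the three special edges one has $|E(I_{n+1})|=3|E(\Sigma_n)|$, whence the last term equals $H_n(2,2)^3$ and the two contributions combine into $8\,H_n(2,2)^3$.

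Finally, using $|E(\Sigma_{n+1})|=3|E(\Sigma_n)|+3$ together with the inductive hypothesis $H_n(2,2)=2^{|E(\Sigma_n)|}$, I conclude
$$
H_{n+1}(2,2)=8\left(2^{|E(\Sigma_n)|}\right)^3=2^{3|E(\Sigma_n)|+3}=2^{|E(\Sigma_{n+1})|}=2^{\frac{3^{n+2}-3}{2}}.
$$
The computation is entirely routine; the only point requiring attention, and the one genuine difference from the Sierpi\'{n}ski case, is the multiplicative factor $8=2^3$, which records the free choice of the three special edges. Keeping track of it correctly, and matching it against the growth of the edge count, is the single step where an error could creep in.
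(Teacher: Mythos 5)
Your proof is correct and follows essentially the same route as the paper: induction on $n$ via the key recursion $H_{n+1}(2,2)=8\,H_n(2,2)^3$ combined with $|E(\Sigma_{n+1})|=3|E(\Sigma_n)|+3$. The only difference is cosmetic — where the paper dismisses the recursion as ``an easy computation'' from Equations \eqref{h2}--\eqref{h0}, you justify the factor $8=2^3$ directly by the free choice of the three special edges (and cross-check it via Proposition \ref{sierp-hanoi}), which is a cleaner account of the same step.
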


\begin{proof}
The proof is by induction. For $n=1$, we have $H_1(2,2)=8=2^3=2^{
|E(\Sigma_{1})|}$.  Then, we recall that
$|E(\Sigma_{n+1})|=3|E(\Sigma_{n})| +3$. An easy computation shows
that $H_{n+1}(2,2)= 8H_{n}(2,2)^3$; therefore, $H_{n+1}(2,2)=
8H_{n}(2,2)^3= \left(2\cdot 2^{ |E(\Sigma_{n})|}\right)^3=
 2^{ 3+ 3|E(\Sigma_{n})|}= 2^{ |E(\Sigma_{n+1})|} $.
\end{proof}

As regards the number of acyclic orientations of $\Sigma_n$, we
have the following result.

\begin{prop}\label{propacyclhanoi}
The number of acyclic orientations on $\Sigma_{n}$ is given by
$H_n(2,0)$, where
$$
H_{n+1}(2,0) = (2H_{n}(2,0))^3 -
2\left(H_{2,n}(2,0)+N_{n}(2,0)\right)^3
$$
and
\begin{eqnarray*}
H_{2,n+1}(2,0)=2H_{2,n}^3+12H_{2,n}^2N_{n}+6H_{2,n}N_{n}^2
\end{eqnarray*}
\begin{eqnarray*}
N_{n+1}(2,0)&=&H_{2,n}^3+11H_{2,n}^2N_n+
4H_{2,n}^2M_{n}+28H_{2,n}N_{n}^2+8H_{2,n}N_{n}M_{n}+14N_{n}^3
+4N_{n}^2M_{n}
\end{eqnarray*}
\begin{eqnarray*}
M_{n+1}(2,0)&=&H_{2,n}^3+21H_{2,n}^2N_n+12H_{2,n}^2M_n+120H_{2,n}N_n^2+120H_{2,n}N_{n}M_{n}+24H_{2,n}M_{n}^2\\&+&172N_{n}^3+
204N_{n}^2M_{n}+72N_{n}M_{n}^2+8M_{n}^3,
\end{eqnarray*}
with initial conditions
$$
H_{2,1}(2,0)=2 \qquad N_1(2,0)=M_1(2,0)=1.
$$
\end{prop}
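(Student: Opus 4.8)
The plan is to mirror the proof of Proposition \ref{propacycsierp} for the Sierpi\'{n}ski graphs. The first step is simply to invoke Formula (5) of Theorem \ref{evaluations}, which guarantees that $H_n(2,0)$ equals the number of acyclic orientations of $\Sigma_n$. Everything else reduces to specializing the reduced recursions \eqref{pigroni2}, \eqref{pigroni1} and \eqref{pigroni0} at the point $(x,y)=(2,0)$.

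The key observation is that at this point $x-1=1$, so \emph{every} power $(x-1)^k$ collapses to $1$, while $y-1=-1$. Substituting these values into \eqref{pigroni2} and collecting terms yields $H_{2,n+1}(2,0)=2H_{2,n}^3+12H_{2,n}^2N_n+6H_{2,n}N_n^2$, and performing the same substitution in \eqref{pigroni1} and \eqref{pigroni0} produces the stated recursions for $N_{n+1}(2,0)$ and $M_{n+1}(2,0)$ (all the higher-degree-in-$(x-1)$ terms simply add into the coefficients). The initial data follow from $H_{2,1}(x,y)=y+2$ and $N_1=M_1=1$, giving $H_{2,1}(2,0)=2$ and $N_1(2,0)=M_1(2,0)=1$.

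To obtain the compact cubic identity, I would first use \eqref{semplificatehanoi} at $x=2$: since $x-1=1$ there, one has $H_{1,n}(2,0)=N_n(2,0)$ and $H_{0,n}(2,0)=M_n(2,0)$, whence $H_n(2,0)=H_{2,n}(2,0)+3N_n(2,0)+M_n(2,0)$. Writing $a=H_{2,n}(2,0)$, $b=N_n(2,0)$, $c=M_n(2,0)$, I would then form the sum $H_{2,n+1}(2,0)+3N_{n+1}(2,0)+M_{n+1}(2,0)$ and compare it, monomial by monomial, with the expansion of $\big(2(a+3b+c)\big)^3-2(a+b)^3$. The only genuinely computational part is this single trinomial-cube expansion, and I expect it to be the main (though entirely elementary) obstacle; it is the exact analogue of the verification carried out for $\Gamma_n$ in Proposition \ref{propacycsierp}, the numerical coefficients differing only because of the factor $2H_n(2,0)$ coming from the three extra special edges of $\Sigma_{n+1}$.
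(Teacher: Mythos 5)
Your proposal is correct and follows essentially the same route as the paper: invoke Formula (5) of Theorem \ref{evaluations}, specialize the reduced recursions \eqref{pigroni2}--\eqref{pigroni0} at $(x,y)=(2,0)$ (where $x-1=1$ collapses all powers and $y-1=-1$), use $H_{1,n}(2,0)=N_n(2,0)$ and $H_{0,n}(2,0)=M_n(2,0)$ to get $H_n(2,0)=H_{2,n}(2,0)+3N_n(2,0)+M_n(2,0)$, and verify the cubic identity $8(a+3b+c)^3-2(a+b)^3$ by direct expansion. The expansion does check out monomial by monomial against the sum of the three stated recursions, so the only computational step you flag as the obstacle indeed goes through.
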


\begin{proof}
It suffices to apply Formula (5) of Theorem \ref{evaluations}.
Then, one can directly verify that the Tutte polynomial
$H_{2,n+1}(2,0) + 3H_{1,n+1}(2,0) +H_{0,n+1}(2,0)$ can be
rewritten as $(2H_{n}(2,0))^3 -
2\left(H_{2,n}(2,0)+N_{n}(2,0)\right)^3$.
\end{proof}

Next, let us look at the chromatic polynomial of $\Sigma_n$.
\begin{prop}\label{propchromhanoi}
For each $n\geq 1$, the chromatic polynomial $\chi_n(\lambda)$ of
the Schreier graph $\Sigma_n$ is
$$
\chi_n(\lambda) = (-1)^{3^n-1}\lambda P_n(\lambda),
$$
where $P_n(\lambda) = P_{2,n}(\lambda) + 3P_{1,n}(\lambda) +
P_{0,n}(\lambda)$, with $P_{i,n}(\lambda)=H_{i,n}(1-\lambda,0)$,
for each $i=1,2,3$.
\end{prop}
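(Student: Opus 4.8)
The plan is to apply part (2) of Theorem \ref{twopolynomials}, following exactly the same strategy as in the proof of Proposition \ref{propchromsierp} for the Sierpi\'{n}ski graphs. First I would record the two structural quantities entering that formula: since each $\Sigma_n$ is connected, one has $k(\Sigma_n)=1$, and since it was already established that $|V(\Sigma_n)|=3^n$, the rank is $r(\Sigma_n)=|V(\Sigma_n)|-k(\Sigma_n)=3^n-1$. Substituting these into the chromatic relation $\chi(G,\lambda)=(-1)^{r(G)}\lambda^{k(G)}T(G;1-\lambda,0)$ gives
$$
\chi_n(\lambda)=(-1)^{3^n-1}\,\lambda\,H_n(1-\lambda,0).
$$

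Next I would invoke the decomposition $H_n(x,y)=H_{2,n}(x,y)+3H_{1,n}(x,y)+H_{0,n}(x,y)$ introduced before Theorem \ref{noname}. Evaluating this identity at $(x,y)=(1-\lambda,0)$ and adopting the convention $P_{i,n}(\lambda)=H_{i,n}(1-\lambda,0)$ for $i=0,1,2$, the linearity of the evaluation map yields
$$
H_n(1-\lambda,0)=P_{2,n}(\lambda)+3P_{1,n}(\lambda)+P_{0,n}(\lambda)=P_n(\lambda).
$$
Combining the two displays gives precisely $\chi_n(\lambda)=(-1)^{3^n-1}\lambda\,P_n(\lambda)$, which is the assertion.

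There is essentially no obstacle in this argument: it is a direct specialization of the relation between the Tutte and the chromatic polynomials, and the only quantitative input is the vertex count $|V(\Sigma_n)|=3^n$, which was already proved by induction, giving the sign $(-1)^{3^n-1}$ through the exponent $r(\Sigma_n)$. In contrast with Proposition \ref{propchromsierp}, the present statement does not ask for explicit recursions on the $P_{i,n}$; should one wish to produce them, they would follow mechanically from Equations \eqref{h2}, \eqref{h1} and \eqref{h0} by setting $(x,y)=(1-\lambda,0)$, but this is not needed for the statement at hand.
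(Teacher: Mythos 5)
Your proposal is correct and follows exactly the paper's argument: a direct application of part (2) of Theorem \ref{twopolynomials} with $r(\Sigma_n)=3^n-1$ and $k(\Sigma_n)=1$, combined with the decomposition $H_n=H_{2,n}+3H_{1,n}+H_{0,n}$ evaluated at $(1-\lambda,0)$. You simply spell out the details the paper leaves implicit (and you rightly use the index range $i=0,1,2$, correcting a small typo in the statement).
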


\begin{proof}
It is an easy consequence of Equation (2) of Theorem
\ref{twopolynomials}. Here we omit the explicit recursive
equations of $P_{i,n}(\lambda)$.
\end{proof}

Also in the case of the Schreier graph $\Sigma_n$, we can
explicitly study the relationship between the evaluation of the
Tutte polynomial on the hyperbola $(x-1)(y-1)=2$ and the partition
function of the Ising model. In \cite[Theorem 3.3]{noiising}, the
partition function of the Ising model on $\Sigma_n$ has been
described as
$$
Z_n=2^{3^n}\cosh(\beta J)^\frac{3^{n+1}-3}{2}\Psi_n(\tanh(\beta
J)),
$$
with
$$
\Psi_n(z)=z^{3^n}\prod_{k=1}^n\psi_k^{3^{n-k}}(z)(\psi_{n+1}(z)-1),
$$
where $\psi_1(z) = \frac{z+1}{z}$ and $\psi_k(z) =
\psi_{k-1}^2(z)-3\psi_{k-1}(z)+4$, for each $k\geq 2$.

\begin{teo}\label{thmisinghanoistrano}
For each $n\geq 1$, one has
\begin{eqnarray}\label{isingbasehanoi}
2(e^{2\beta J}-1)^{|V(\Sigma_n)|-1}e^{-\beta
J|E(\Sigma_n)|}H_n\left(\frac{e^{2\beta J}+1}{e^{2\beta
J}-1},e^{2\beta J}\right) = Z_n.
\end{eqnarray}
\end{teo}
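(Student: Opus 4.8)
The plan is to mirror the proof of Theorem \ref{thmisingsierp}, adapting it to the extra terms that the special edges contribute. First I set $t=e^{\beta J}$, $w=t^{2}$ and $z=\tanh(\beta J)=\frac{w-1}{w+1}$, and recall $|V(\Sigma_n)|=3^{n}$, $|E(\Sigma_n)|=\frac{3^{n+1}-3}{2}$. With these substitutions the claimed identity \eqref{isingbasehanoi} reads $C_nH_n\!\left(\frac{w+1}{w-1},w\right)=D_nQ_n(\psi_{n+1}(z)-1)$, where
$$
C_n=2(w-1)^{3^{n}-1}t^{-\frac{3^{n+1}-3}{2}},\qquad D_n=2^{3^{n}}\!\left(\frac{w+1}{2t}\right)^{\frac{3^{n+1}-3}{2}}\!z^{3^{n}},\qquad Q_n=\prod_{k=1}^{n}\psi_k^{3^{n-k}}(z).
$$
A direct computation yields the multiplicative relations $C_{n+1}=\frac{(w-1)^{2}}{4t^{3}}C_n^{3}$, $D_{n+1}=\frac{(w+1)^{3}}{8t^{3}}D_n^{3}$ and $Q_{n+1}=Q_n^{3}\psi_{n+1}$, which will drive the induction.

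Exactly as for $\Gamma_n$, I introduce $A_n=H_{2,n}+H_{1,n}$ and $B_n=2H_{1,n}+H_{0,n}$, so that $H_n=A_n+B_n$. The core of the argument is to insert the hyperbola relations $\frac{1}{x-1}=\frac{w-1}{2}$ and $x-1=\frac{2}{w-1}$ into \eqref{h2}, \eqref{h1}, \eqref{h0}, and to verify that the resulting $A_{n+1}$ and $B_{n+1}$ can be rewritten purely in terms of $A_n$ and $B_n$ (so that, as on the Sierpi\'{n}ski graphs, no third independent combination survives). I expect the clean form
$$
A_{n+1}=\frac{w+1}{2(w-1)}\,A_n^{2}\bigl(2wA_n+(w+1)B_n\bigr),
$$
together with a longer, but still homogeneous cubic, expression for $B_{n+1}$. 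The difference from $\Gamma_n$ is precisely that these formulas carry several powers of $(x-1)$, coming from the four possible numbers of special edges in a spanning subgraph of $\Sigma_{n+1}$.

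The decisive step is to recognise the Ising functions inside these recursions. Putting $\psi_{n+1}:=\frac{2wA_n+(w+1)B_n}{(w-1)A_n}$ turns the $A$-recursion into $A_{n+1}=\frac{w+1}{2}A_n^{3}\psi_{n+1}$, and a short manipulation of the $B$-recursion shows that the quantities so defined obey $\psi_{n+2}=\psi_{n+1}^{2}-3\psi_{n+1}+4$. Since the value $\psi_2=\frac{2(w^{2}-w+2)}{(w-1)^{2}}$ produced from $A_1,B_1$ coincides with the one obtained from $\psi_1=\frac{z+1}{z}=\frac{2w}{w-1}$, these $\psi_{n+1}$ are exactly the functions in the statement. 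Note that, unlike the Sierpi\'{n}ski situation where $\phi_{n+1}-2=B_n/A_n$, here the link between the graph polynomials and the $\psi_k$ is affine rather than a mere shift, so this identification requires some care.

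Finally I prove by induction the two equalities
$$
C_nA_n=\frac{w+1}{w-1}\,D_nQ_n,\qquad C_nB_n=(\psi_{n+1}-\psi_1)\,D_nQ_n.
$$
The base case $n=1$ is a direct evaluation, and the inductive step combines the recursions for $A_{n+1},B_{n+1}$ with the multiplicative relations for $C_n,D_n,Q_n$ and the quadratic recursion for $\psi_{n+1}$. Adding the two equalities and using $\psi_1=\frac{2w}{w-1}$ gives $C_nH_n=D_nQ_n\!\left(\frac{w+1}{w-1}+\psi_{n+1}-\psi_1\right)=D_nQ_n(\psi_{n+1}-1)=Z_n$, which is the assertion. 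I expect the main obstacle to be the bookkeeping of the second paragraph: showing that the many terms of \eqref{h2}, \eqref{h1}, \eqref{h0}, once restricted to the hyperbola, reorganise into cubics in $A_n,B_n$ and generate precisely the quadratic $\psi$-recursion.
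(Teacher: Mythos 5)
Your proposal is correct and follows essentially the same route as the paper's proof: the same auxiliary combinations $A_n=H_{2,n}+H_{1,n}$, $B_n=2H_{1,n}+H_{0,n}$, the same prefactors $C_n,D_n$ (you merely absorb one extra factor of $z$ into $D_n$, which is why your two product identities read $C_nA_n=\tfrac{w+1}{w-1}D_nQ_n$ and $C_nB_n=(\psi_{n+1}-\psi_1)D_nQ_n$ while the paper writes the equivalent $C_n'A_n'=D_n'Q_n$ and $C_n'B_n'=D_n'Q_n(z(\psi_{n+1}-1)-1)$), and the same induction driven by the hyperbola-restricted cubic recursions for $A_n,B_n$. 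Your only organizational difference is to make the quadratic recursion $\psi_{n+2}=\psi_{n+1}^2-3\psi_{n+1}+4$ explicit via the substitution $\psi_{n+1}=\frac{2wA_n+(w+1)B_n}{(w-1)A_n}$, which checks out and slightly streamlines the inductive step the paper carries out directly.
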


\begin{proof}
The proof can be done by induction and follows the same strategy
as for the Sierpi\'{n}ski graphs (see Theorem
\ref{thmisingsierp}). Recall that $|E(\Sigma_n)| =
\frac{3^{n+1}-3}{2}$ and $|V(\Sigma_n)|= 3^n$. Putting, as usual,
$e^{\beta J}=t$, Equation \eqref{isingbasehanoi} can be written as
$$
\frac{2(t^2-1)^{3^n-1}}{t^{\frac{3^{n+1}-3}{2}}}H_n\left(\frac{t^2+1}{t^2-1},t^2\right)=
2^{3^n}\left(\frac{t^2+1}{2t}\right)^{\frac{3^{n+1}-3}{2}}\Psi_n\left(\frac{t^2-1}{t^2+1}\right),
$$
or, more explicitly,
$$
\frac{2(t^2\!-\!1)^{3^n-1}}{t^{\frac{3^{n+1}-3}{2}}}H_n\!\!\left(\frac{t^2\!+\!1}{t^2\!-\!1},t^2\right)\!=\!2^{3^n}\!\!
\left(\frac{t^2\!+\!1}{2t}\right)^{\frac{3^{n+1}-3}{2}}
\!\!\left(\frac{t^2\!-\!1}{t^2\!+\!1}\right)^{3^n}\!\!
\prod_{k=1}^n\psi_k^{3^{n-k}}(z)(\psi_{n+1}(z)-1)\left|_{z=\frac{t^2-1}{t^2+1}}\right..
$$
For each $n\geq 1$, set
$$
A_n'(x,y)= H_{2,n}(x,y)+H_{1,n}(x,y) \qquad
B_n'(x,y)=2H_{1,n}(x,y)+H_{0,n}(x,y)
$$
and
$$
C_n' =  \frac{2(t^2-1)^{3^n-1}}{t^{\frac{3^{n+1}-3}{2}}}\qquad
\qquad
D_n'=2^{3^n}\left(\frac{t^2+1}{2t}\right)^{\frac{3^{n+1}-3}{2}}
\left(\frac{t^2-1}{t^2+1}\right)^{3^n-1}\!\!\!\!\!\!\!\!\!.
$$
Since $H_n(x,y)=A_n'(x,y)+B_n'(x,y)$, it is enough to prove the
equations
\begin{eqnarray*}
C_n'A_n'\left(\frac{t^2+1}{t^2-1}, t^2\right)
=D_n'\prod_{k=1}^n\psi_k^{3^{n-k}}(z)\left|_{z=\frac{t^2-1}{t^2+1}}\right.
\end{eqnarray*}
and
\begin{eqnarray*}
C_n'B_n'\left(\frac{t^2+1}{t^2-1}, t^2\right) = D_n'
\prod_{k=1}^n\psi_k^{3^{n-k}}(z)(z(\psi_{n+1}(z)-1)-1)\left|_{z=\frac{t^2-1}{t^2+1}}\right..
\end{eqnarray*}
They can be proven by induction, using the following relations
obtained by the evaluations of Equations \eqref{h2}, \eqref{h1}
and \eqref{h0} on the hyperbola $(x-1)(y-1)=2$:
$$
A'_{n+1}= \frac{(y+1)A_n^{'2}(B'_n+yB'_n+2yA'_n)}{2(y-1)}
$$
$$
B'_{n+1}=
\frac{(B'_n+yB'_n+2yA'_n)(4yA'_nB'_n+y^2A'_nB'_n+y^2B_n^{'2}+3A'_nB'_n+2yB_n^{'2}+4A_n^{'2}+B_n^{'2})}{2(y-1)^2},
$$
with initial conditions
$$
A'_1\left(\frac{y+1}{y-1},y\right)=\frac{y(y+1)}{y-1} \qquad
B'_1\left(\frac{y+1}{y-1},y\right)= \frac{4y}{(y-1)^2}.
$$
\end{proof}

\begin{center}
\textbf{Acknowledgements}
\end{center}
We wish to express our deepest gratitude to Tullio
Ceccherini-Silberstein and Toni Machì for useful comments and
suggestions. We are very grateful to the referee for several
remarks which improved the presentation of the paper.


\begin{thebibliography}{99}
\bibitem{alvarez} P. D. Alvarez, F. Canfora and L. Parisi,
Partition function of the Potts model on self-similar lattices as
a dynamical system and multiple transitions, preprint, available
at \texttt{http://arxiv.org/abs/1007.4082}

\bibitem{fractal} L. Bartholdi, R. Grigorchuk and V.
Nekrashevych,  \emph{From fractal groups to fractal sets}, in {\it
Fractals in Graz}, eds. P. Grabner and W. Woess, Trends in
Mathematics, Birk\"{a}user Verlag, Basel, 2003, 25--118.

\bibitem{amen} L. Bartholdi and B. Vir\'{a}g,  \emph{Amenability via random
walks}, Duke Math. J., {\bf 130} (1) (2005), 39--56.

\bibitem{biggsetal} N. L. Biggs, R. M. Damerell and D. A. Sands,
\emph{Recursive Families of Graphs}, J. Combinatorial Theory, {\bf
12} (B) (1972), 123--131.

\bibitem{bollobas} B. Bollob\'{a}s,  \emph{Modern Graph Theory}, Graduate
Texts in Mathematics, {\bf 184}, Springer-Verlag, New York (1998).

\bibitem{equivalence} T. Brylawski and J. Oxley,  \emph{The Tutte
polynomial and its Applications}, in  \emph{Matroid Applications},
ed. N. White, Encyclopedia Math. Appl., {\bf 40}, Cambridge Univ.
Press, Cambridge, 1992, 123--225.

\bibitem{csdi} T. Ceccherini-Silberstein, A. Donno and D. Iacono,  \emph{The Tutte Polynomial of the Schreier graphs of the Grigorchuk
group and the Basilica group}, in Ischia Group Theory 2010:
Proceedings of the Conference (M. Bianchi, P. Longobardi, M. Maj
and C. M. Scoppola editors), 45--68. World Scientific, Singapore,
Toh Tuck Link (2011).

\bibitem{chang} S.-C. Chang,  \emph{Acyclic orientations on the Sierpi\'{n}ski
gasket}, available at \texttt{http://arxiv.org/abs/1005.3627v1}

\bibitem{taiwan3} S.-C. Chang and L.-C. Chen,  \emph{Spanning forests on the Sierpi\'{n}ski Gasket}, Discrete Math. Theor. Comput. Sci.,
{\bf 10} (2) (2008), 55--76.

\bibitem{taiwan2} S.-C. Chang and L.-C. Chen,  \emph{Number of
connected spanning subgraphs on the Sierpi\'{n}ski Gasket},
Discrete Math. Theor. Comput. Sci., {\bf 11} (1) (2009), 55--77.

\bibitem{taiwan} S.-C. Chang, L.-C. Chen and W.-S. Yang, \emph{Spanning Trees on the
Sierpi\'{n}ski Gasket}, J. Stat. Phys., {\bf 126} (3) (2007),
649--667.

\bibitem{cori} R. Cori and Y. Le Borgne,  \emph{The sand-pile model and
Tutte polynomials. Formal power series and algebraic
combinatorics}, (Scottsdale, AZ, 2001), Adv. in Appl. Math., {\bf
30}, Issue 1--2, (2003), 44--52.

\bibitem{noispanning} D. D'Angeli and A. Donno,  \emph{Weighted spanning
trees on some self-similar graphs}, Electron. J. Combin., Vol. 18
(1), P16 (28 pages), 2011.

\bibitem{noiising} D. D'Angeli, A. Donno and T. Nagnibeda,  \emph{Partition functions of the Ising model on some
self-similar Schreier graphs}, in \emph{Progress in Probability:
Random Walks, Boundaries and Spectra}, D.Lenz, F. Sobieczky and W.
Woess editors, {\bf 64} (2011), 277--304, Springer Basel.

\bibitem{noi1} D. D'Angeli, A. Donno and T. Nagnibeda,  \emph{Counting dimer coverings on self-similar Schreier graphs}, European J. Combin.,
{\bf 33} (7) (2012), 1484--1513.

\bibitem{machi} J.A. Ellis-Monaghan and C. Merino,  \emph{Graph
Polynomials and Their Applications I: The Tutte Polynomial}, in
 \emph{Structural Analysis of Complex Networks}, ed. M. Dehmer, (2011), 219--255.
% \texttt{http://arxiv.org/abs/0803.3079v2}

\bibitem{grigorchuk} R. Grigorchuk, \emph{Solved and unsolved
problems around one group}, in  \emph{Infinite groups: geometric,
combinatorial and dynamical aspects},  eds. L. Bartholdi, T.
Ceccherini-Silberstein, T. Smirnova-Nagnibeda and A. \.{Z}uk,
Progr. Math., {\bf 248}, Birkh\"auser, Basel, 2005, 117--218.

\bibitem{hanoi} R. Grigorchuk and Z. \v{S}uni\'{c},
 \emph{Self-similarity and branching in group theory}, in  \emph{
Groups St. Andrews 2005, I}, London Math. Soc. Lecture Note Ser.,
{\bf 339}, Cambridge Univ. Press, Cambridge, 2007, 36--95.

\bibitem{primo} R. Grigorchuk and A. \.{Z}uk,  \emph{On a torsion-free
weakly branch group defined by a three-state automaton}, Internat.
J. Algebra Comput., {\bf 12} (1--2) (2002), 223--246.

\bibitem{coloring} S. Klav\v{z}ar,  \emph{Coloring Sierpi\'{n}ski graphs and Sierpi\'{n}ski gasket
graphs}, Taiwanese J. Math., {\bf 12} (2) (2008), 513-522.

\bibitem{lyons} R. Lyons,  \emph{Asymptotic enumeration of spanning trees}, Combin. Probab. Comput., {\bf 14} (4) (2005), 491--522.

\bibitem{merino} C. Merino,  \emph{Chip-firing and the Tutte
polynomial}, Ann. Comb., {\bf 1} (3) (1997), 253--259.

\bibitem{volo} V. Nekrashevych,  \emph{Self-similar Groups}, Mathematical Surveys and
Monographs, {\bf 117}, American Mathematical Society, Providence,
RI, 2005.

\bibitem{noy} M. Noy and A. Rib\'{o}, \emph{Recursively constructible families of
graphs}, Adv. in Appl. Math., {\bf 32} (2004), 350--363.

\bibitem{percolation} J. Oxley and D.J.A. Welsh,  \emph{The Tutte
polynomial and percolation}, in  \emph{Graph theory and related
topics}, Proc. Conf., Univ. Waterloo, Waterloo, Ont., 1977, eds.
J.A. Bondy and U.S.R. Murty, Academic Press, New York-London,
1979, 329--339.

\bibitem{wagner1} E. Teufl and S. Wagner,  \emph{The number of spanning
trees of finite Sierpi\'{n}ski graphs}, Proceedings of the Fourth
Colloquium on Mathematics and Computer Science, Nancy, September
18--22, 2006, 411-414.

\bibitem{wagner2} E. Teufl and S. Wagner,  \emph{Enumeration of matchings in families of self-similar
graphs}, Discrete Appl. Math.,  {\bf 158} (14),  (2010),
1524--1535

%\texttt{http://www.math.uni-bielefeld.de/sfb701/preprints/sfb09014.pdf}

\bibitem{tutte47} W.T. Tutte,  \emph{A ring in graph theory}, Proc. Cambridge Philos.
Soc., {\bf 43}, (1947), 26--40.

\bibitem{tutte54} W.T. Tutte, \emph{A contribution to the theory of
chromatic polynomials}, Canadian. J. Math., {\bf 6}, (1954),
80--91.

\bibitem{tutte67} W.T. Tutte, \emph{On dichromatic polynomials}, J. Combinatorial
Theory, {\bf 2}, (1967), 301--320.

\bibitem{tutte04} W.T. Tutte,  \emph{Graph-polynomials}, Special issue on
the Tutte polynomial, Adv. in Appl. Math., {\bf 32} (1--2) (2004),
5--9.

\bibitem{potts} D.J.A. Welsh and C. Merino, \emph{The Potts model and
the Tutte polynomial. Probabilistic techniques in equilibrium and
nonequilibrium statistical physics}, J. Math. Phys., {\bf 41} (3)
(2000), 1127--1152.

\end{thebibliography}
\end{document}